\numberwithin{equation}{section}
\numberwithin{figure}{section}
\theoremstyle{plain}
\newtheorem{thm}{\protect\theoremname}[section]
\newenvironment{lyxcode}
	{\par\begin{list}{}{
		\setlength{\rightmargin}{\leftmargin}
		\setlength{\listparindent}{0pt}
		\raggedright
		\setlength{\itemsep}{0pt}
		\setlength{\parsep}{0pt}
		\normalfont\ttfamily}%
	 \item[]}
	{\end{list}}
\theoremstyle{remark}
\newtheorem*{acknowledgement*}{\protect\acknowledgementname}
\theoremstyle{remark}
\newtheorem{rem}[thm]{\protect\remarkname}
\theoremstyle{plain}
\newtheorem{lem}[thm]{\protect\lemmaname}
\theoremstyle{definition}
\newtheorem{defn}[thm]{\protect\definitionname}
\theoremstyle{plain}
\newtheorem{prop}[thm]{\protect\propositionname}
\date{}
\providecommand{\acknowledgementname}{Acknowledgement}
\providecommand{\definitionname}{Definition}
\providecommand{\lemmaname}{Lemma}
\providecommand{\propositionname}{Proposition}
\providecommand{\remarkname}{Remark}
\providecommand{\theoremname}{Theorem}
\begin{document}
\title[High temperature TAP upper bound]{High temperature TAP upper bound for the free energy of mean field
spin glasses}
\author{David Belius}
\email{david.belius@cantab.net}
\address{Department of Mathematics and Computer Science, University of Basel,
Switzerland.}
\thanks{Research supported by SNSF grant 176918. }
\begin{abstract}
This work proves an upper bound for the free energy of the Sherrington-Kirkpatrick
model and its generalizations in terms of the Thouless-Anderson-Palmer
(TAP) energy. The result applies to models with spherical or Ising
spins and any mixed $p$-spin Hamiltonian with external field or with
a non-linear spike term. The bound is expected to be tight to leading
order at high temperature, and is non-trivial in the presence of an
external field. For the proof a geometric microcanonical method is
employed, in which one covers the spin space with sets, each of which
is centered at a magnetization vector $m$ and whose contribution
to the partition function is bounded in terms of the TAP energy at
$m$.
\end{abstract}

\maketitle

\section{Introduction}

This article proves an upper bound for the free energy of spherical
or Ising mixed $p$-spin spin glass models in terms of the maximum
of their Thouless-Anderson-Palmer (TAP) energy. This is a step towards
the computation of the free energy of spin glass models completely
within a geometric, microcanonical TAP framework. Though its goal
is similar, as explained below this approach is fundamentally different
from other recent TAP approaches in the mathematical literature.

To formally state the results consider for $N\ge1$ an inner product
on $\mathbb{R}^{N}$ given by $\left\langle a,b\right\rangle =\frac{1}{N}\sum a_{i}b_{i}$,
so that the corresponding norm $\|\cdot\|=\sqrt{\left\langle \cdot,\cdot\right\rangle }$
satisfies $\|a\|^{2}=\frac{1}{N}\sum_{i=1}^{N}a_{i}^{2}$. Let $B_{N}\left(r\right)=\left\{ \sigma\in\mathbb{R}^{N}:\|\sigma\|\le r\right\} $
and $B_{N}^{\circ}\left(r\right)=\left\{ \sigma\in\mathbb{R}^{N}:\|\sigma\|<r\right\} $
denote the closed resp. open ball of radius $r$ and let $B_{N}=B_{N}\left(1\right)$
and $B_{N}^{\circ}=B_{N}^{\circ}\left(1\right)$. Due to our convention
$\left\{ -1,1\right\} ^{N}\subset B_{N-1}$.

Let $H_{N}\left(\sigma\right)$ be any mixed $p$-spin Hamiltonian
on $B_{N}$, that is a centered Gaussian process indexed by the \emph{spin
vectors} $\sigma\in B_{N}$ with covariance
\begin{equation}
\mathbb{E}\left[H_{N}\left(\sigma\right)H_{N}\left(\sigma'\right)\right]=N\xi\left(\left\langle \sigma,\sigma'\right\rangle \right)\text{\,for }\sigma,\sigma'\in B_{N},\label{eq: covar}
\end{equation}
for a power series $\xi\left(x\right)=\sum_{p\ge0}a_{p}x^{p}$ with
$a_{p}\ge0$ and $\xi\left(1\right)<\infty$. The inverse temperature
is denoted by $\beta\ge0$.

Let $K\in\left\{ 1,\ldots,N\right\} $ and $U$ be a linear subspace
of $\mathbb{R}^{N}$ of dimension $K$, and let $P^{U}$ denote projection
onto $U$. Let $f_{N}:B_{N}\to\mathbb{R}$ be a function representing
a generalized external field such that $f_{N}\left(\sigma\right)=f_{N}\left(P^{U}\sigma\right)$
for all $\sigma\in B_{N}$, and write
\begin{equation}
H_{N}^{f}\left(\sigma\right)=H_{N}\left(\sigma\right)+f_{N}\left(\sigma\right)\text{ for }\sigma\in B_{N}.\label{eq: HNf}
\end{equation}
With $f_{N}\left(\sigma\right)=h\sum_{i=1}^{N}\sigma_{i}$ (and $K=1$)
we obtain a standard linear external field and with $f_{N}\left(x\right)=\frac{h}{N}(\sum_{i=1}^{N}\sigma_{i})^{2}$
a quadratic spike as studied for instance in \cite{benarousLandscapeSpikedTensor2018}.

Let the \emph{Onsager term} of the TAP energy be given by
\begin{equation}
{\rm On}\left(q\right)=\xi\left(1\right)-\left(1-q\right)\xi'\left(q\right)-\xi\left(q\right),q\in\left[0,1\right].\label{eq: Onsager term def}
\end{equation}
The TAP energy of the mixed $p$-spin model with Ising spins is 
\begin{equation}
H_{{\rm TAP}}^{{\rm Ising}}\left(m\right)=\beta H_{N}^{f}\left(m\right)+I_{{\rm Ising}}\left(m\right)+\frac{\beta^{2}}{2}{\rm On}\left(\|m\|^{2}\right),\label{eq: HTAP ising def}
\end{equation}
for the entropy term
\begin{equation}
I_{{\rm Ising}}\left(m\right)=-\sum_{i=1}^{N}J\left(m_{i}\right),\label{eq: ising entropy I}
\end{equation}
where $J$ is the binary entropy function given by
\begin{equation}
J\left(m\right)=\frac{1+m}{2}\log\left(1+m\right)+\frac{1-m}{2}\log\left(1-m\right).\label{eq: binary entropy def J}
\end{equation}
Our main result for the Ising spin model is the following.
\begin{thm}[Ising TAP Upper Bound]
\label{thm: main_thm_ising}For all $\delta,L\in\left(0,\infty\right)$
there is a constant $c_{1}=c_{1}\left(\delta,L\right)$ such that
the following holds. Let $N\ge1$ and $E$ be the uniform distribution
on $\left\{ -1,1\right\} ^{N}$. Assume that $0\le\beta,\sqrt{\xi'''\left(1\right)}\le L$.
Let $U$ have dimension $1\le K\le c_{1}N/\log N$ and $f_{N}:B_{N}\to\mathbb{R}$
be Lipschitz on $B_{N}$ with respect to $\|\cdot\|$ with Lipschitz
constant at most $LN$. Then
\begin{equation}
\mathbb{P}\left(\log E\left[\exp\left(\beta H_{N}^{f}\left(\sigma\right)\right)\right]\le\sup_{m\in\left(-1,1\right)^{N}}H_{{\rm TAP}}^{{\rm Ising}}\left(m\right)+\delta N\right)\ge1-c_{1}^{-1}e^{-c_{1}N}.\label{eq: Ising TAP UB}
\end{equation}
\end{thm}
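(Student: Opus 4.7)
The plan is to cover $\left\{-1,1\right\}^{N}$ by a family $\left\{C_{m}\right\}_{m\in\mathcal{M}}$ of cells, indexed by a discrete set $\mathcal{M}\subset\left(-1,1\right)^{N}$ of magnetization vectors with $\log\left|\mathcal{M}\right|\le\delta N/2$, and to prove that for each $m\in\mathcal{M}$,
\[
E\left[e^{\beta H_{N}^{f}\left(\sigma\right)}1_{\sigma\in C_{m}}\right]\le\exp\left(H_{{\rm TAP}}^{{\rm Ising}}\left(m\right)+\delta N/2\right)
\]
holds with probability $1-e^{-cN}$. Summing over $\mathcal{M}$ and union-bounding then yields (\ref{eq: Ising TAP UB}).

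\textbf{The single-cell bound.} The cells are designed so that $\sigma\in C_{m}$ implies $\sigma$ is typical under the product measure $P_{m}$ on $\left\{-1,1\right\}^{N}$ with $E_{P_{m}}\left[\sigma_{i}\right]=m_{i}$. Changing measure from uniform to $P_{m}$,
\[
E\left[e^{\beta H_{N}^{f}\left(\sigma\right)}1_{C_{m}}\right]=E_{P_{m}}\left[\frac{e^{\beta H_{N}^{f}\left(\sigma\right)}}{\prod_{i}\left(1+m_{i}\sigma_{i}\right)}1_{C_{m}}\right],
\]
and $\sum_{i}\log\left(1+m_{i}\sigma_{i}\right)$ concentrates at $\sum_{i}J\left(m_{i}\right)$ for $\sigma\sim P_{m}$, producing the entropy factor $e^{-\sum J\left(m_{i}\right)}$ on $C_{m}$. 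For the disorder I condition on $Y=\left(H_{N}\left(m\right),\nabla H_{N}\left(m\right)\right)$ and decompose $H_{N}\left(\sigma\right)=L_{m}\left(\sigma\right)+\tilde{H}_{N}\left(\sigma\right)$ with $L_{m}\left(\sigma\right)=E\left[H_{N}\left(\sigma\right)\mid Y\right]\approx H_{N}\left(m\right)+\nabla H_{N}\left(m\right)\cdot\left(\sigma-m\right)$ and $\tilde{H}_{N}\left(\sigma\right)$ independent of $Y$. A direct calculation from (\ref{eq: covar}) shows that for $\sigma=m+\tau$ on the Ising cube with $\left\langle m,\tau\right\rangle\approx0$ and $\|\tau\|^{2}\approx1-\|m\|^{2}$ (both typical under $P_{m}$),
\[
{\rm Var}\left(\tilde{H}_{N}\left(\sigma\right)\right)=N\left(\xi\left(1\right)-\left(1-\|m\|^{2}\right)\xi'\left(\|m\|^{2}\right)-\xi\left(\|m\|^{2}\right)\right)+o\left(N\right)=N\cdot{\rm On}\left(\|m\|^{2}\right)+o\left(N\right),
\]
and Gaussian integration against $\beta\tilde{H}_{N}\left(\sigma\right)$ yields the Onsager contribution $\tfrac{\beta^{2}}{2}N\cdot{\rm On}\left(\|m\|^{2}\right)$. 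The first-order piece $\beta\nabla H_{N}\left(m\right)\cdot\left(\sigma-m\right)$ has $P_{m}$-mean zero and variance $O\left(N\right)$, so restricting $C_{m}$ to its sub-Gaussian tail makes it $O\left(\sqrt{N\log N}\right)=o\left(N\right)$; the external field gives $\beta\left|f_{N}\left(\sigma\right)-f_{N}\left(m\right)\right|\le\beta LN\|P^{U}\left(\sigma-m\right)\|=O\left(\sqrt{KN}\right)=o\left(N\right)$ by Lipschitz and $K\le c_{1}N/\log N$. Conditional Markov applied to the remaining expectation then gives the single-cell bound with conditional probability $1-e^{-cN}$.

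\textbf{Main obstacle.} The chief difficulty is constructing $\mathcal{M}$ together with the cells $\left\{C_{m}\right\}$ so that simultaneously (i) every $\sigma\in\left\{-1,1\right\}^{N}$ lies in some $C_{m}$, (ii) the single-cell bound applies, and (iii) $\log\left|\mathcal{M}\right|\le\delta N$. A naive product $\varepsilon$-grid of $\left(-1,1\right)^{N}$ has $\left(2/\varepsilon\right)^{N}$ cells and is exponential for any fixed $\varepsilon$, so the cover must exploit the low effective dimension of the relevant parameters: for example one discretizes $\|m\|^{2}$, the $K$-dimensional projection $P^{U}m$, and the empirical distribution of the coordinates of $m$, while letting the remaining coordinates of $m$ be determined canonically by the configurations in $C_{m}$ via a local-averaging map $\sigma\mapsto m\left(\sigma\right)$ making $\sigma$ a $P_{m\left(\sigma\right)}$-typical configuration by construction. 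The hypothesis $K\le c_{1}N/\log N$ is used precisely so that an $N^{-1}$-grid on $P^{U}m\in\mathbb{R}^{K}$ has log-cardinality $O\left(c_{1}N\right)\le\delta N/2$, and the hypothesis $\sqrt{\xi'''\left(1\right)}\le L$ controls higher-order Taylor corrections that would otherwise degrade the approximations $L_{m}\left(\sigma\right)\approx H_{N}\left(m\right)+\nabla H_{N}\left(m\right)\cdot\left(\sigma-m\right)$ and ${\rm Var}\left(\tilde{H}_{N}\left(\sigma\right)\right)\approx N\cdot{\rm On}\left(\|m\|^{2}\right)$ uniformly across the cells.
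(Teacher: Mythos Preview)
Your single-cell heuristic is sound and close to the paper's: decomposing $H_N(\sigma)=H_N(m)+\nabla H_N(m)\cdot(\sigma-m)+H_N^m(\sigma-m)$, reading off the Onsager term from the variance of the recentered piece, and identifying the entropy with $-\sum_i J(m_i)$ is exactly how the three terms of $H_{\rm TAP}^{\rm Ising}$ arise. The genuine gap is the cover. You need, for \emph{every} $\sigma$ and for the realized disorder, an $m\in\mathcal{M}$ with $|\nabla H_N(m)\cdot(\sigma-m)|=o(N)$; but $\nabla H_N(m)$ is a random vector of $\|\cdot\|$-norm of order one, and nothing about a deterministic sub-exponential set $\mathcal{M}$ forces $\sigma-m$ to be nearly orthogonal to it. Your fix --- ``restrict $C_m$ to the sub-Gaussian tail of $\nabla H_N(m)\cdot(\sigma-m)$'' --- makes the cells disorder-dependent, and then they need not cover: the excluded $\sigma$'s are precisely those with large $\nabla H_N(m)\cdot(\sigma-m)$, hence large $H_N^f(\sigma)$, so their contribution to the partition function cannot be discarded on entropy grounds. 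The alternative fix --- ``let $m=m(\sigma)$ via local averaging'' --- does not control the linear term either, since once $m$ depends on $\sigma$ the identity $E_{P_m}[\sigma-m]=0$ is irrelevant, and discretizing only $\|m\|^2$, $P^U m$, and the empirical law of $(m_i)$ gives no handle on $\langle\nabla H_N(m),\sigma-m\rangle$.

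The paper resolves this by making the magnetizations disorder-dependent but in a very structured way: it builds $m_\alpha$ \emph{iteratively}, at each step moving from $m_{\alpha,l}$ in the two-dimensional span of $\nabla H_N(m_{\alpha,l})$ and the minimal-entropy normal $\lambda_{m_{\alpha,l}}^\delta$ (projected orthogonally to previous directions). The ``equator'' cell $\mathcal{E}_\alpha$ is then \emph{defined} by $|\langle\sigma,u_{\alpha,k+1,j}\rangle|\le\eta$, which forces $\sigma-m_\alpha$ nearly orthogonal to $\nabla H_N(m_\alpha)$ by construction, not by typicality. Coverage is automatic: following any $\sigma$ through the iteration, each non-terminal step adds an increment of $\|\cdot\|$-norm at least $\eta/2$ in a fresh orthogonal direction, so after at most $5\eta^{-2}$ steps $\sigma$ must land in an equator. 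Because each $m_\alpha$ lives in a subspace of dimension $K+O(\eta^{-2})$ and the increments are discretized on a fixed $\varepsilon$-grid, the total number of cells is at most $(2/\varepsilon)^{K+10\eta^{-2}}$, bounded in $N$. The entropy is then controlled not by a change to $P_m$ but by the half-space bound $E[\langle\lambda_{m_\alpha}^\delta,\sigma-m_\alpha\rangle\ge-\delta]\le e^{I_{\rm Ising}(m_\alpha)+o(N)}$, which is why $\lambda_m^\delta$ enters the construction alongside $\nabla H_N(m)$. This adaptive, gradient-following iteration is the missing idea in your outline.
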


For the spherical mixed $p$-spin model the TAP energy is given by
\begin{equation}
H_{{\rm TAP}}^{\text{sph}}\left(m\right)=\beta H_{N}^{f}\left(m\right)+I_{{\rm sph}}\left(m\right)+\frac{\beta^{2}}{2}{\rm On}\left(\|m\|^{2}\right),\label{eq: HTAP spherical def}
\end{equation}
for the entropy term
\begin{equation}
I_{{\rm sph}}\left(m\right)=\frac{N}{2}\log\left(1-\|m\|^{2}\right),\label{eq: spherical entropy I}
\end{equation}
and our main result is the following. Let $S_{N-1}\left(r\right)=\left\{ \sigma\in\mathbb{R}^{N}:\|\sigma\|=r\right\} $
denote the sphere of radius $r$, and write $S_{N-1}=S_{N-1}\left(1\right)$.\\
\noindent\begin{minipage}[t]{1\columnwidth}%
\begin{thm}[Spherical TAP Upper Bound]
\label{thm: main_thm_spherical}For all $\delta,L\in\left(0,\infty\right),$
there is a constant $c_{2}=c_{2}\left(\delta,L\right)$ such that
the following holds. Let $N\ge1$ and $E$ be the the uniform distribution
on $S_{N-1}$. Assume that $0\le\beta,\sqrt{\xi'''\left(1\right)}\le L$.
Let $U$ have dimension $1\le K\le c_{2}N/\log N$ and $f_{N}:B_{N}\to\mathbb{R}$
be Lipschitz on $B_{N}$ with respect to $\|\cdot\|$ with Lipschitz
constant at most $LN$. Then
\begin{equation}
\mathbb{P}\left(\log E\left[\exp\left(\beta H_{N}^{f}\left(\sigma\right)\right)\right]\le\sup_{m\in B_{N}}H_{{\rm TAP}}^{{\rm sph}}\left(m\right)+\delta N\right)\ge1-c_{2}^{-1}e^{-c_{2}N}.\label{eq: spherical main result}
\end{equation}
\end{thm}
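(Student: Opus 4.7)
The plan is to bound the partition function via a geometric slice decomposition: cover $S_{N-1}$ by sets centered at magnetizations $m$, and bound each set's contribution by $\exp(H_{{\rm TAP}}^{{\rm sph}}(m) + o(N))$. Since $f_N$ depends only on $P^U\sigma$ and $\dim U = K \le c_2 N/\log N$, take an $\eta$-net $\mathcal{N}\subset U\cap B_N$ of cardinality $|\mathcal{N}|\le (C/\eta)^K = \exp(o(N))$ for $c_2$ small enough depending on $\delta$. Partition $S_{N-1}$ into cells $A_m$ ($m\in\mathcal{N}$) so that $\sigma\in A_m$ iff $P^U\sigma$ is nearest to $m$. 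Then
\begin{equation*}
\log E[\exp(\beta H_N^f(\sigma))] \le \max_{m\in\mathcal{N}} \log E[\exp(\beta H_N^f(\sigma))\mathbf{1}_{A_m}] + \log|\mathcal{N}|,
\end{equation*}
so it suffices to show each slice integral is at most $H_{{\rm TAP}}^{{\rm sph}}(m) + \delta N/2$ with probability at least $1 - \exp(-\Theta(N))$; a union bound over $\mathcal{N}$ (whose log-cardinality is $o(N)$), together with the trivial inequality $H_{{\rm TAP}}^{{\rm sph}}(m) \le \sup_{m'\in B_N} H_{{\rm TAP}}^{{\rm sph}}(m')$, then gives the theorem.

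For each fixed $m\in\mathcal{N}$, the Lipschitz assumption gives $|f_N(\sigma) - f_N(m)| \le LN\eta$ on $A_m$, which is absorbed into $o(N)$ for small $\eta$. For the Gaussian part I would decompose $H_N(\sigma) = H_N(m) + R(\sigma)$; since $\sigma\in A_m$ has $\langle\sigma,m\rangle \approx q:=\|m\|^2$, by (\ref{eq: covar}) the residual $R$ is essentially orthogonal to $H_N(m)$ and has covariance $N(\xi(q + \langle\tau,\tau'\rangle) - \xi(q))$, where $\tau := \sigma - m \in U^\perp$ and $\|\tau\|^2 \approx 1-q$. Thus
\begin{equation*}
E[\exp(\beta H_N^f(\sigma))\mathbf{1}_{A_m}] \le \exp(\beta H_N^f(m) + o(N)) \cdot E[\exp(\beta R(\sigma))\mathbf{1}_{A_m}],
\end{equation*}
and the uniform probability of $A_m$ (which looks like a spherical shell of radius $\sqrt{1-q}$ in $U^\perp$, up to a thin range of projections) has logarithm $I_{{\rm sph}}(m) = \frac{N}{2}\log(1-q) + o(N)$.

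The hardest step is proving $\log E[\exp(\beta R(\sigma))\mathbf{1}_{A_m}] \le I_{{\rm sph}}(m) + \frac{\beta^2}{2}{\rm On}(q) + o(N)$ with probability $\ge 1 - \exp(-\Theta(N))$. Taylor-expanding $\xi(q + \langle\tau,\tau'\rangle)$ around $q$, $R$ is to leading order $\sqrt{N\xi'(q)}\,\langle g,\tau\rangle$ for a Gaussian vector $g\in U^\perp$, with curvature remainders controlled by $\xi'''(1)\le L^2$. The pointwise variance contributes $\frac{\beta^2}{2}N(\xi(1)-\xi(q))$, while the spherical integration of the exponential of a linear Gaussian field over the radius-$\sqrt{1-q}$ sphere (using $\|g\|^2/N \approx \xi'(q)$ from concentration) supplies a subtractive correction $-\frac{\beta^2}{2}N(1-q)\xi'(q)$; together they produce exactly ${\rm On}(q) = \xi(1) - (1-q)\xi'(q) - \xi(q)$. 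Upgrading the expected slice estimate to a high-probability bound will require Borell--TIS-type Gaussian concentration for the supremum of $R$ on $A_m$, and the hypothesis $\beta, \sqrt{\xi'''(1)} \le L$ is needed to keep the nonlinear Taylor remainders subleading; the resulting exponentially small exceptional probability per slice is then absorbed by the union bound over $\mathcal{N}$.
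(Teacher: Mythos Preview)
Your proposal has a genuine gap at the step you yourself flag as ``hardest'': the claim that spherical integration of the linear Gaussian field supplies a \emph{subtractive} correction $-\tfrac{\beta^2}{2}N(1-q)\xi'(q)$ is incorrect, and without it your single-step decomposition cannot reach the Onsager term.

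Concretely, after fixing $P^U\sigma\approx m$ and writing $R(\sigma)=H_N(\sigma)-H_N(m)$ on the slice, the residual $R$ still contains the random effective external field $\nabla H_N(m)\cdot\tau$ (the part of $\nabla H_N(m)$ in $U^\perp$ is nonzero since $\nabla H_N(m)$ points in a random direction, generically not in $U$). Conditioning on $g=P^{U^\perp}\nabla H_N(m)$ and integrating $\exp(\beta g\cdot\tau)$ over $\tau$ uniform on the $(N{-}K)$-sphere of radius $\sqrt{1-q}$ gives, for $\|g\|^2\approx\xi'(q)$, a contribution $\approx\exp\bigl(+\tfrac{\beta^2}{2}N(1-q)\xi'(q)\bigr)$, not a negative one. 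Combined with the annealed bound on the recentered part (which does give $\tfrac{\beta^2}{2}N\,{\rm On}(q)$), you recover only $\tfrac{\beta^2}{2}N(\xi(1)-\xi(q))$, i.e.\ the naive annealed bound on $R$---which the paper explicitly points out (Section~\ref{sec: sketch}, around \eqref{eq: xi z}) is \emph{not} tight when $q>0$. The whole point of the paper's construction is that removing this effective field requires further slicing in the (random) direction of $\nabla H_N(m)$; this produces a new effective field at the new center, and so on iteratively (Definitions~\ref{def: def of m}--\ref{def: def of sets}). The iteration terminates after $O(\eta^{-2})$ steps because each step pushes $\|m_\alpha\|^2$ up by at least $\eta^2/4$, and only on the resulting ``equators'' $\mathcal E_\alpha$ is the effective field genuinely $O(\eta)$ so that the annealed bound on the recentered Hamiltonian yields ${\rm On}(q_\alpha)$ (Propositions~\ref{prop: effective external field B-1} and~\ref{prop: Onsager}).

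In short: decomposing only along $U$ handles the deterministic spike $f_N$ but not the random linear part of $H_N$; your proposed mechanism for cancelling the latter has the wrong sign, and the missing idea is precisely the paper's iterative decomposition along $\nabla H_N(m_\alpha)$.
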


\end{minipage}
\begin{lyxcode}
\end{lyxcode}
The bounds (\ref{eq: Ising TAP UB}) and (\ref{eq: spherical main result})
for the free energy are expected to be tight to leading order at high
temperature. In the absence of an external field the bounds are no
stronger than the annealed upper bound and are thus trivial, but in
the presence of an external field obtaining such upper bounds is a
difficult problem. In future work the author plans to rigorously compute
the maximal TAP energy for some of these models (building on results
such as \cite{AuffingerBenArousCernyComplexityofSpinGlasses,fyodorovHighDimensionalRandomFields2013,SubagComplexityOfSphericalPspinmodelsA2ndmomentapproach,zhoufanTAPFreeEnergy2021,beliusTrivialityGeometryMixed2022}),
which combined with the this article will yield concrete upper bounds
for the free energy.

An important question is whether a similar upper bound that is tight
at all temperatures can be proven, in addition to a matching lower
bound. Such bounds must involve conditions which rule out some $m$'s,
including at least the Plefka condition \cite{TAPSolutionOfSolvableModelOfASpinGlass,PlefkaConvergenceCondOftheTAPequations}
of the physics literature. For the special case of the spherical
$2$-spin model with linear external field (i.e. $E$ is uniform on
$S_{N-1}$, $\xi\left(x\right)=x^{2}$ and $f_{N}\left(\sigma\right)=h\sum_{i=1}^{N}\sigma_{i}$)
the work \cite{BeliusKistler2spin} shows that in fact for all $\beta,h\ge0$
\[
\log E\left[\exp\left(\beta H_{N}^{f}\left(\sigma\right)\right)\right]=\sup_{m:\beta\left(1-\|m\|^{2}\right)\le\frac{1}{\sqrt{2}}}H_{{\rm TAP}}^{{\rm sph}}\left(m\right)+o\left(N\right),
\]
where the condition on $m$ is precisely Plefka's condition for this
model. Thus it shows that matching upper and lower bounds hold at
any temperature once the Plefka condition is added to the sup. An
extension of this result to the present setting would pave the way
for the computation of the free energy of a larger class of spin glass
models, including at low temperature, using a geometric TAP approach.

Theorems \ref{thm: main_thm_ising}-\ref{thm: main_thm_spherical}
follow from a more general bound for a general spin reference measure
$E$ on the sphere $S_{N-1}$, which need not be a product measure.
To state it define the general entropy function
\begin{equation}
I_{E,\delta}\left(m\right)=\inf_{\lambda\in\mathbb{R}^{N},\|\lambda\|=1}\log E\left[\left\{ \sigma\in S_{N-1}:\left\langle \lambda,\sigma-m\right\rangle \ge-\delta\right\} \right]\in[-\infty,0],\label{eq: IN}
\end{equation}
for any probability measure $E$ on $S_{N-1}$, and a general TAP
energy by
\begin{equation}
H_{{\rm TAP}}^{E,\delta}\left(m\right)=\beta H_{N}^{f}\left(m\right)+I_{E,\delta}\left(m\right)+\frac{\beta^{2}}{2}{\rm On}\left(\|m\|^{2}\right).\label{eq: HTAP measure on subset of sphere}
\end{equation}
Our general bound is the following.

\noindent\begin{minipage}[t]{1\columnwidth}%
\begin{thm}[General TAP Upper Bound]
\label{thm: main_thm_general}For any $\delta\in\left(0,1\right),\beta\ge0,1\le K\le N,L\in\left(0,\infty\right)$
there is a constant $\kappa=\kappa\left(\delta,\beta,K,L\right)$
such that the following holds. Let $N\ge1$, and $E$ be any probability
measure on $S_{N-1}$. Assume $\sqrt{\xi'''\left(1\right)}\le L$.
Let $U$ have dimension $K$ and $f_{N}:B_{N}\to\mathbb{R}$ be Lipschitz
on $B_{N}$ with respect to $\|\cdot\|$ with Lipschitz constant at
most $LN$. Then it holds that
\begin{equation}
\mathbb{P}\left(\log E\left[\exp\left(\beta H_{N}^{f}\left(\sigma\right)\right)\right]\le\frac{1}{N}\sup_{m\in B_{N}}H_{{\rm TAP}}^{E,\delta}\left(m\right)+\delta\right)\ge1-\kappa e^{-\frac{\delta}{2}N}.\label{eq: general UB}
\end{equation}
More explicitly, the constant $\kappa$ satisfies $\kappa\le\bar{\kappa}^{\bar{\kappa}}$
for $\bar{\kappa}=c\max\left(K,\left(\beta L/\delta\right)^{8}\right)$
and a universal constant $c$.
\end{thm}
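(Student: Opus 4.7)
The plan is to cover the sphere $S_{N-1}$ by a sub-exponentially large collection of half-space caps $\{A_m\}_{m\in\mathcal N}$, indexed by a net $\mathcal N\subset B_N$ of magnetization vectors, and to bound the contribution of each cap to $Z=E[\exp(\beta H_N^f(\sigma))]$ in terms of $H_{\rm TAP}^{E,\delta}(m)$. The net is built so that $\log|\mathcal N|=o(N)$, which is possible because $f_N$ depends only on the $K$-dimensional projection $P^U\sigma$ with $K\le cN/\log N$. For each $m\in\mathcal N$, I would pick a unit direction $\lambda_m$ approximately attaining the infimum in \eqref{eq: IN} and set $A_m=\{\sigma\in S_{N-1}:\langle\lambda_m,\sigma-m\rangle\ge-\delta/4\}$, so that $\log E[A_m]\le I_{E,\delta}(m)+o(N)$; a geometric argument then confirms $\bigcup_{m\in\mathcal N}A_m=S_{N-1}$.

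The core step is a per-cap estimate using the Gaussian conditional decomposition
\[
H_N(\sigma)=\mathbb{E}\bigl[H_N(\sigma)\mid H_N(m),\nabla H_N(m)\bigr]+R_m(\sigma),
\]
where $R_m$ is independent of $(H_N(m),\nabla H_N(m))$ and satisfies $R_m(m)=0$. A direct covariance computation, using the Taylor expansion of $\xi$ around $\|m\|^2$, shows that integrating the conditional mean against $E$ restricted to $A_m$ contributes (via the first-order gradient term) an amount that combines with the Gaussian fluctuation of $R_m$ on $A_m$ to reconstruct the Onsager contribution $\tfrac{\beta^2}{2}{\rm On}(\|m\|^2)$ of \eqref{eq: Onsager term def}, up to lower-order errors. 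Borell--TIS concentration for the supremum of $R_m$ on $A_m$ then gives, with probability at least $1-e^{-cN}$,
\[
\log E\bigl[\exp(\beta H_N^f(\sigma))\mathbf{1}_{A_m}\bigr]\le \beta H_N^f(m)+\log E[A_m]+\tfrac{\beta^2}{2}{\rm On}(\|m\|^2)+o(N),
\]
which is $\le H_{\rm TAP}^{E,\delta}(m)+o(N)$ by the choice of $\lambda_m$. A union bound over $\mathcal N$ then gives the claimed estimate \eqref{eq: general UB}, absorbing $\log|\mathcal N|+o(N)$ into the $\delta$ slack, and the failure probability $|\mathcal N|e^{-cN}\le\kappa e^{-\delta N/2}$ yields the explicit $\kappa\le\bar\kappa^{\bar\kappa}$.

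The principal obstacle is the per-cap estimate. Since $A_m$ is a half-space cap of potentially non-negligible measure rather than a local neighborhood of $m$, the first-order term $\langle\nabla H_N(m),\sigma-m\rangle$ is a priori of order $N$, and showing that it combines with the residual $R_m$ to produce only the Onsager correction requires a careful Gaussian Laplace-type computation. This balances the fluctuations of the increment $H_N(\sigma)-H_N(m)$ against the conditional correction induced by $\nabla H_N(m)$, exploiting the covariance structure of the mixed $p$-spin gradient and the decomposition of $\sigma-m$ into its $m$-parallel and $m$-perpendicular components. A secondary difficulty is constructing the net $\mathcal N$ so that the direction $\lambda_m$ dictated by the entropy infimum is simultaneously compatible with the cover of $S_{N-1}$ while keeping $|\mathcal N|=e^{o(N)}$, likely using that the relevant $m$'s are concentrated on a low-dimensional subset of $B_N$ related to $U$.
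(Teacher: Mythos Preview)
Your proposal has a genuine gap precisely at the point you identify as the principal obstacle, and the resolution you sketch does not work.

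The core issue is the gradient term $\nabla H_N(m)\cdot(\sigma-m)$. On your half-space caps $A_m$ this term is of order $N$, and there is no cancellation with the fluctuations of $R_m$ that produces the Onsager correction. To see why, note that under the annealed expectation (or after conditioning on $H_N(m),\nabla H_N(m)$ and applying Markov to $R_m$), the variance of $R_m(\sigma)$ for $\sigma\in S_{N-1}$ is essentially $N\xi(1)$ minus the projection onto the span of $H_N(m)$ and $\nabla H_N(m)$; this does \emph{not} equal $N\,\mathrm{On}(\|m\|^2)$ unless $\sigma-m$ is orthogonal to $m$. More importantly, the gradient term itself, integrated against $E$ restricted to $A_m$, acts like an external field of order $N$ in a direction that has nothing to do with $\lambda_m$, and this simply overestimates the partition function by an order-$N$ exponent. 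The Onsager term $\xi(1)-\xi(q)-(1-q)\xi'(q)$ arises in the paper specifically as the variance of the recentered Hamiltonian $H_N^m(\hat\sigma)$ \emph{restricted to $\hat\sigma\perp m$}, after the gradient term has been made negligible by construction; it is not the result of any Laplace-type balancing.

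The paper's actual mechanism is the one you are missing: the cover is built \emph{adaptively and iteratively}, with the direction of decomposition at each step chosen to contain $\nabla H_N(m_\alpha)$ (and also $\lambda_{m_\alpha}$). The equator sets $\mathcal E_\alpha$ are defined so that $\sigma-m_\alpha$ is nearly orthogonal to $\nabla H_N(m_\alpha)$, which is exactly what kills the first-order term. The cover stays sub-exponential not because of the low dimension of $U$ alone, but because each iteration increases $\|m_\alpha\|^2$ by at least $\eta^2/4$, so the recursion terminates after $O(\eta^{-2})$ steps regardless of $N$. Your non-adaptive net $\mathcal N\subset B_N$ cannot achieve this: either it is too coarse to make the gradient term small on each piece, or it has exponentially many points. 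The secondary difficulty you mention (making $\lambda_m$ compatible with the cover) is handled in the paper by decomposing in \emph{both} the gradient and entropy directions at each step, which is why the increments live in $I_\varepsilon^2$ rather than $I_\varepsilon$.
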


\end{minipage}

\medskip{}
Note that the only difference between (\ref{eq: HTAP ising def}),
(\ref{eq: HTAP spherical def}) and (\ref{eq: HTAP measure on subset of sphere})
is the corresponding entropy term $I_{{\rm Ising}},I_{{\rm sph}},I_{E,\delta}$.
If $E$ is the uniform measure on $\left\{ -1,1\right\} ^{N}$, as
in Theorem \ref{thm: main_thm_ising}, it turns out that $I_{E,\delta}$$\left(m\right)$
is $-\infty$ if $m$ is sufficiently far from $\left(-1,1\right)^{N}$,
and otherwise it can be bounded above in terms of $I_{{\rm Ising}}\left(m\right)$.
Similarly if $E$ is the uniform measure on the sphere $S_{N-1}$,
as in Theorem \ref{thm: main_thm_spherical}, it turns out that $I_{E,\delta}\left(m\right)$
can be bounded above in terms of $I_{{\rm sph}}\left(m\right)$. These
bound will be proved and used to derive Theorems \ref{thm: main_thm_spherical}-\ref{thm: main_thm_general}
from the general Theorem \ref{thm: main_thm_general} in Sections
\ref{sec: Ising SK proof}-\ref{sec: Spherical SK proof}.

The proof of Theorem \ref{thm: main_thm_general} is based on covering
the sphere with sets, each centered on a ``magnetization'' vector
$m\in\mathbb{R}^{N}$, on which one can expand the Hamiltonian $H_{N}\left(\sigma\right)$
around $m$, giving rise to a recentered ``effective'' Hamiltonian
$H_{N}^{m}\left(\hat{\sigma}\right)$ (see (\ref{eq: recent hamilt sketch})
and (\ref{eq: recentered hamil})-(\ref{eq: decompostion})) for which
the \emph{external field effectively vanishes}. The integral of the
Gibbs factor $\exp(\beta H_{N}^{f}\left(\sigma\right))$ over the
set of the cover centered at $m$ can be bounded above by $\exp(H_{{\rm TAP}}^{E,\delta}(m)+o(N))$
using an annealed upper bound (Markov inequality). The number of sets
in the cover will be seen to grow slowly with $N$, and therefore
a union bound over all the sets will be enough to obtain the upper
bound (\ref{eq: general UB}). The approach can be seen as a TAP-informed
sophisticated moment method. A more detailed sketch is provided in
Section \ref{sec: sketch}.

\subsection{Related work and historical remarks}

Mean field-spin glasses were introduced in \cite{SKSolvableModelOfASpinGlass}
as toy models of the properties of exotic magnetic alloys, and they
\cite{KosterlitzThoulessJonesSphericalModelofASpinGlass,CrisantiSommersTheSphericalPspinInteractionSGModel,talagrandMultipleLevelsSymmetry2000,crisantiSphericalSpinglassModel2004,TalagrandFEOftheSphericalMeanFieldModel}
and related models have since become paradigmatic examples of complex
systems \cite{MezardParisiVirasoro-SpinGlassTheoryandBeyond,kimCoveringCubesRandom1998,DingSlySunProofoftheSatisfiabilityConjectureforLargek,dingCapacityLowerBound2019,mezard2009information}.
Their investigation can alternatively be thought of as the study of
the extrema of highly correlated high dimensional random fields.

The \emph{partition function }is the integral 
\[
Z_{N}=E\left[\exp\left(\beta H_{N}^{f}\left(\sigma\right)\right)\right],
\]
over the spins $\sigma$ against a reference measure $E$ (the uniform
distribution on $\left\{ -1,1\right\} ^{N}$ for the Ising spin model,
and the uniform measure on the sphere $S_{N-1}$ for the spherical
model). The \emph{free energy} is the exponent
\[
F_{N}=\frac{1}{N}\log Z_{N}.
\]
The spin vector $\sigma$ under the \emph{Gibbs measure $\mathcal{G}\left(A\right)=E[1_{A}\exp(\beta H_{N}^{f}(\sigma))]/Z_{N}$}
models for instance the aforementioned exotic materials. Computing
the free energy for large $N$ is a first step towards determining
the behavior of $\sigma$ under the Gibbs measure, which is the ultimate
goal of studying these models.

The TAP energy was introduced in \cite{TAPSolutionOfSolvableModelOfASpinGlass}
for the purpose of solving the Sherrington-Kirkpatrick model. Presumably
the original motivation was to devise a framework to - among other
things - compute the free energy. In physics, the computation of the
free energy was however achieved by the replica symmetry breaking
ansatz and the replica method of Parisi \cite{ParisiSeqofApproxSolsToSK,parisi1979infinite},
and in mathematics by the interpolation method of Guerra \cite{GuerraBrokenReplicaSymmetryBounds}
together with the methods of Talagrand, Aizenman-Sim-Starr and Panchenko
\cite{aizenmanExtendedVariationalPrinciple2003,TalagrandTheParisiFormula,talagrandFreeEnergySpherical2006,panchenkoParisiFormulaMixed2014,chenAizenmanSimsStarrSchemeParisi2013,PanchenkoTheSKModel},
which are very different approaches. The study of the TAP energy has
played a complementary role in the analysis of the model in physics
\cite{mezard1987spin,brayMetastableStatesSpin1980,dedominicisWeightedAveragesOrder1983,grossSimplestSpinGlass1984,kurchanBarriersMetastableStates1993,CrisantiSommersTAPApproachtoSphericalPspinSGModels,cavagnaFormalEquivalenceTAP2003}
and mathematics \cite[Section 1.7]{ChatterjeeSpinGlassesandSteinsMethod,auffingerThoulessAndersonPalmer2019,auffingerSpinDistributionsGeneric2019,chenTAPFreeEnergy2018,chenGeneralizedTAPFree2021,talagrandMeanFieldModels2011a},
rather than being fully developed as a stand-alone solution of it
(see however recent work mentioned below).

Initial steps towards the development of the TAP approach as a stand-alone
solution were taken in \cite{kistlerPrivateCommunication2016,BeliusKistler2spin}.
As mentioned above, the article \cite{BeliusKistler2spin} computed
the free energy of the $2$-spin spherical model at all temperatures
and linear external fields in terms of the TAP energy. This article
takes a further step, by giving an upper bound for the free energy
in terms of the TAP energy in a much more general setting.

An alternative approach to computing the free energy using the TAP
energy was initiated by Subag \cite{SubagFreeEnergyLandscapesinSphericalSpinGlasses,chenGeneralizedTAPFree2021,chenGeneralizedTAPFree2022,subagFreeEnergySpherical2021}.
This involves properties of the limiting Gibbs measure such as the
concept of ``multisamplable overlap'' and is therefore very different
from the approach of the current article which is microcanonical and
works on the level of spin configurations for finite $N$. Indeed,
note that Theorems \ref{thm: main_thm_ising}-\ref{thm: main_thm_general}
are all quantitative finite $N$ statements.

An earlier stream of work also initiated by Subag computes the free
energy of certain spherical spin glasses at low enough temperature
\cite{subagGeometryGibbsMeasure2017,benarousGeometryTemperatureChaos2020}.
These, and also \cite{arousShatteringMetastabilitySpin2021}, use
a moment method and are more similar in spirit to the present approach.
The eventual goal of the present research project is to compute the
free energy for all models and all temperatures using a TAP-informed
moment method.

Another TAP approach involving an iterative solution of the TAP equations
(critical point equations of the TAP energy) was initiated by Bolthausen
\cite{BolthausenAnIterativeConstructionOfSoloftheTAPequations,bolthausenMoritaTypeProof2018,brenneckeNoteReplicaSymmetric2021}.
The iterative construction of a cover of the sphere in the present
article bears some similarity to this iterative solution of the TAP
equations. Those works also use a moment method. The way iteration
and moment method are used is however quite different; the aforementioned
articles construct one sequence of iterates that converge to the conjecturally
unique TAP solution at high temperature, while we here construct a
hierarchy of iterates whose associated sets cover the whole sphere.
Morally speaking, one of the iterates in our hierarchy should be the
iterate that Bolthausen's algorithm produces.

A further difference compared to the aforementioned works is that
here bounding the free energy in terms of the TAP energy is neatly
decoupled from the study of the behavior of the TAP energy itself
(indeed, the present work deals only with the former and leaves the
latter for later research).

\subsection{Overview of article}

In Section \ref{sec: sketch} we give a detailed sketch of the proofs
of Theorems \ref{thm: main_thm_ising}-\ref{thm: main_thm_general},
motivating the construction of the cover of $S_{N-1}$. In Section
\ref{sec: law of recent} we formally introduce the recentered Hamiltonian
and study its law. In Section \ref{sec: magnetization construciton}
we give the iterative construction of magnetizations, which we then
use in Section \ref{sec: construction of cover} to construct the
cover of $S_{N-1}$. Then in Section \ref{sec: proof of gen UB} the
construction is used to prove the general TAP upper bound Theorem
\ref{thm: main_thm_general}. In Section \ref{sec: Ising SK proof}
the Ising upper bound Theorem \ref{thm: main_thm_ising} is derived
from the general result, and in Section \ref{sec: Spherical SK proof}
the spherical upper bound Theorem \ref{thm: main_thm_spherical} is
similarly derived. The appendix contains some basic results about
the Hamiltonian that follow from the classical theory of Gaussian
processes.

We use $c$ to denote unspecified positive constants, whose numerical
value may be different each time the notation $c$ is used, even within
the same formula. The standard inner product is denoted by $a\cdot b$
and the standard norm by $\left|\cdot\right|$, so that $a\cdot b=\sum_{i=1}^{N}a_{i}b_{i}=N\left\langle a,b\right\rangle $
and $\left|a\right|=\sqrt{\sum_{i=1}^{N}a_{i}^{2}}=\sqrt{N}\|a\|$
for $a,b\in\mathbb{R}^{N}$.
\begin{acknowledgement*}
The author is grateful to Erwin Bolthausen, Jiri Cerny, Francesco
Concetti, Giuseppe Genovese and Shuta Nakajima for their close reading
of a draft of this article and valuable comments for improvement.
\end{acknowledgement*}

\section{\label{sec: sketch}Sketch of proof}

As the construction of the magnetizations and cover in Sections \ref{sec: magnetization construciton}-\ref{sec: construction of cover}
is quite involved, this section gives a detailed sketch of the proofs
which motivates it.

The goal is to use annealed upper bounds (i.e. the Markov inequality)
to obtain a bound for the free energy that is tight to leading order,
even in the presence of an external field.

\subsection{\label{subsec: sketch sphere}Sphere with linear external field}

First let us sketch a direct proof of the bound with a linear external
field, that is with
\begin{equation}
H_{N}^{f}\left(\sigma\right)=H_{N}\left(\sigma\right)+Nh\left\langle \sigma,u_{1}\right\rangle ,\label{eq: HNf sketch}
\end{equation}
for a $\|\cdot\|$-unit vector $u_{1}$, and for the spherical model
where $E$ denotes the uniform measure on the sphere $S_{N-1}$ (cf.
Theorem \ref{thm: main_thm_spherical}).

The standard annealed upper bound for the free energy $F_{N}$ is
obtained from
\begin{equation}
Z_{N}\le\mathbb{E}\left[E\left[\exp\left(\beta H_{N}^{f}\left(\sigma\right)\right)\right]\right]e^{o\left(N\right)},\label{eq: annealed bound}
\end{equation}
and is a simple consequence of the Markov inequality. If $h=0$ the
integral on the RHS equals 
\begin{equation}
\exp\left(\frac{1}{2}\text{Var}\left(\beta H_{N}^{f}\left(\sigma\right)\right)\right)=\exp\left(N\frac{\beta^{2}}{2}\xi\left(1\right)\right).\label{eq: RHS 2}
\end{equation}

If the covariance is $\xi\left(x\right)=\sum_{p\ge0}a_{p}x^{p}$ with
$a_{0}=a_{1}=0$ and $h=0$ then (\ref{eq: annealed bound}) gives
a bound for the free energy that is tight to leading order at high
temperature (for small enough $\beta$ this can be verified by proving
a matching lower bound using a simple second moment method). By contrast,
if at least one of $a_{0},a_{1},h$ are non-zero then the law of the
Hamiltonian $H_{N}^{f}\left(\sigma\right)$ is that of $\mbox{\ensuremath{A_{0}}+\ensuremath{A_{1}\cdot\sigma}+\ensuremath{\tilde{H}_{N}\left(\sigma\right)}}$
where $A_{0}\sim\mathcal{N}\left(0,Na_{0}\right),A_{1}\sim\mathcal{N}\left(hu_{1},Na_{1}I\right)$
and $\tilde{H}_{N}\left(\sigma\right)$ are independent, and $\tilde{H}_{N}\left(\sigma\right)$
is a Hamiltonian with covariance function $\tilde{\xi}\left(x\right)=\sum_{p\ge2}a_{p}x^{p}$.
A non-vanishing global shift $A_{0}$ or a non-vanishing (random)
external field $A_{1}\cdot\sigma$ will both individually cause the
annealed upper bound (\ref{eq: annealed bound}) to overestimate the
free energy to leading order.

In this sketch we are interested in rectifying this to obtain a bound
that is tight - at least for some $\beta$ - when $\xi$ only has
terms of order $2$ and higher, but $h>0$. Roughly speaking we do
this by covering the sphere with a finite number of regions where
the \emph{effective} external field vanishes.

Define the partition function restricted to a region $A$ by
\begin{equation}
Z_{N}\left(A\right)=E\left[1_{A}\exp\left(\beta H_{N}\left(\sigma\right)+N\beta h\left\langle \sigma,u_{1}\right\rangle \right)\right].\label{eq: rest part func}
\end{equation}
For the ``equator''
\begin{equation}
\mathcal{E}=\left\{ \sigma:\left|\left\langle \sigma,u_{1}\right\rangle \right|\le\eta\right\} ,\label{eq: first equator}
\end{equation}
we have the bound
\begin{equation}
Z_{N}\left(\mathcal{E}\right)\le E\left[\exp\left(\beta H_{N}\left(\sigma\right)\right)\right]e^{N\beta h\eta}.\label{eq: eliminate ext field}
\end{equation}
Applying the annealed bound as in (\ref{eq: annealed bound})-(\ref{eq: RHS 2})
to the integral on the RHS yields
\begin{equation}
\begin{array}{rcccl}
E\left[\exp\left(\beta H_{N}\left(\sigma\right)\right)\right] & \le & \exp\left(N\frac{\beta^{2}}{2}\xi\left(1\right)+o\left(N\right)\right) & \overset{\eqref{eq: Onsager term def}}{=} & \exp\left(N\frac{\beta^{2}}{2}\text{On}\left(0\right)+o\left(N\right)\right)\\
 &  &  & \overset{\eqref{eq: HTAP spherical def}}{=} & \exp\left(H_{{\rm TAP}}^{{\rm sph}}\left(0\right)+o\left(N\right)\right),
\end{array}\label{eq: anneal on first equator}
\end{equation}
so we obtain from (\ref{eq: eliminate ext field})
\begin{equation}
Z_{N}\left(\mathcal{E}\right)\le\exp\left(H_{{\rm TAP}}^{{\rm sph}}\left(0\right)+N\beta h\eta+o\left(N\right)\right).\label{eq: equator bound}
\end{equation}
This will give a bound for $\frac{1}{N}\log Z_{N}\left(\mathcal{E}\right)$
that is at high temperature tight to leading order in the limits $N\to\infty$
and then $\eta\downarrow0$, since the covariance $\xi$ of $H_{N}\left(\sigma\right)$
only has terms of order $2$ and higher.

Eq. (\ref{eq: equator bound}) bounds the contribution of the equator
$\mathcal{E}$ to the partition function in terms of the TAP energy
at $m=0$. To get a bound for the actual partition function $Z_{N}=Z_{N}\left(S_{N-1}\right)$
we need to also bound the contribution of $\mathcal{E}^{c}$. It is
natural to decompose $\mathcal{E}^{c}$ according to the value of
$\left\langle \sigma,u_{1}\right\rangle $ using the sets
\begin{equation}
D_{\left(\alpha_{1}\right)}=\left\{ \sigma\in S_{N-1}:\left\langle \sigma,u_{1}\right\rangle \in(\left|\alpha_{1}\right|,\left|\alpha_{1}\right|+\varepsilon]\times\text{sign}\left(\alpha_{1}\right)\right\} \text{\,for }\alpha_{1}\in\left(-1,1\right),\label{eq: dalpha1 def}
\end{equation}
(the somewhat unusual expression on the RHS is used because it's convenient
to have $\left|\left\langle \sigma,u_{1}\right\rangle \right|>\left|\alpha_{1}\right|$
for $\sigma\in D_{\left(\alpha_{1}\right)}$). Defining the $\varepsilon$-spaced
grid 
\begin{equation}
I_{\varepsilon,\eta}=\left(\varepsilon\mathbb{Z}\right)\cap\left(-1,1\right)\backslash\left[-\frac{\eta}{2},\frac{\eta}{2}\right],\label{eq: I eps eta grid}
\end{equation}
we have for $\varepsilon\le\eta/2$
\begin{equation}
S_{N-1}=\mathcal{E}\cup\left({\displaystyle \bigcup_{\alpha_{1}\in I_{\varepsilon,\eta}}}D_{\left(\alpha_{1}\right)}\right).\label{eq: sphere decomp with dalpha1}
\end{equation}
On each $D_{\left(\alpha_{1}\right)}$ the external field term is
essentially constant (for small $\varepsilon$), so from (\ref{eq: rest part func})
we can bound
\begin{equation}
Z_{N}\left(D_{\left(\alpha_{1}\right)}\right)\le E\left[1_{D_{\left(\alpha_{1}\right)}}\exp\left(\beta H_{N}\left(\sigma\right)\right)\right]e^{N\beta h\alpha_{1}+N\beta h\varepsilon}.\label{eq: Dalpha1}
\end{equation}
One may approximate the set $D_{\left(\alpha_{1}\right)}$ by the
set
\begin{equation}
\tilde{D}=\left\{ \sigma:\left\langle \sigma,u_{1}\right\rangle =\alpha_{1}\right\} \subset D_{\left(\alpha_{1}\right)}.\label{eq: D tilde}
\end{equation}
Let $E^{\tilde{D}}$ denote the uniform measure on $\tilde{D}$. Using
that the Hamiltonian has Lipschitz constant with respect to $\|\cdot\|$
of order $N$ with high probability (see (\ref{eq: lipschitz})) the
right-hand side of (\ref{eq: Dalpha1}) can be shown to equal
\begin{equation}
E\left[D_{\left(\alpha_{1}\right)}\right]E^{\tilde{D}}\left[\exp\left(\beta H_{N}\left(\sigma\right)\right)\right]e^{N\beta h\alpha_{1}+O\left(\varepsilon N\right)}.\label{eq: approx}
\end{equation}

It is natural to now apply the annealed upper bound as in (\ref{eq: annealed bound})-(\ref{eq: RHS 2})
to $E^{\tilde{D}}\left[\exp\left(\beta H_{N}\left(\sigma\right)\right)\right]$.
Unfortunately, this will not give a tight bound, essentially because
of the presence of an effective external field, as we describe below.
To see this at the level of the covariance of the Hamiltonian, let
\begin{equation}
m_{\left(\alpha_{1}\right)}=\alpha_{1}u_{1},\label{eq: m alpha1}
\end{equation}
be the ``center'' of the sets $\tilde{D},D_{\left(\alpha_{1}\right)}$
and consider for $\sigma\in\tilde{D}$ the change of variables 
\begin{equation}
\sigma=m_{\left(\alpha_{1}\right)}+\hat{\sigma}\text{ for }\hat{\sigma}\in\text{span}(m_{\left(\alpha_{1}\right)})^{\bot}\cap S_{N-1}\left(\sqrt{1-\alpha_{1}^{2}}\right).\label{eq: change of var in slice}
\end{equation}
An easy computation shows that the process $\hat{\sigma}\to H_{N}(m_{\left(\alpha_{1}\right)}+\hat{\sigma})$
has covariance function
\begin{equation}
z\to\xi\left(\alpha_{1}^{2}+z\right)=\xi\left(\alpha_{1}^{2}\right)+\xi'\left(\alpha_{1}^{2}\right)z+\frac{1}{2}\xi''\left(\alpha_{1}^{2}\right)z^{2}+\ldots,\label{eq: xi z}
\end{equation}
which is a power series that for $\alpha_{1}\ne0$ has terms of order
$0$ and $1$ in $z$. Therefore by the discussion after (\ref{eq: RHS 2})
an annealed upper bound for $E^{\tilde{D}}\left[\exp\left(\beta H_{N}\left(\sigma\right)\right)\right]$
can not be tight. The origin of these problematic terms of (\ref{eq: xi z})
can be understood by defining for any $m$ with $\|m\|<1$ a \emph{recentered
Hamiltonian} $H_{N}^{m}\left(\hat{\sigma}\right)$ by the expansion
\begin{equation}
H_{N}\left(m+\hat{\sigma}\right)=H_{N}\left(m\right)+\nabla H_{N}\left(m\right)\cdot\hat{\sigma}+H_{N}^{m}\left(\hat{\sigma}\right).\label{eq: recent hamilt sketch}
\end{equation}
It turns out that for fixed $m$
\[
H_{N}\left(m\right),\left(\nabla H_{N}\left(m\right)\cdot\hat{\sigma}\right)_{\hat{\sigma}:\hat{\sigma}\cdot m=0},\left(H_{N}^{m}\left(\hat{\sigma}\right)\right)_{\hat{\sigma}:\hat{\sigma}\cdot m=0}
\]
are independent, and $\left(H_{N}^{m}\left(\hat{\sigma}\right)\right)_{\hat{\sigma}:\hat{\sigma}\cdot m=0}$
has covariance function 
\begin{equation}
z\to\xi_{\|m\|^{2}}\left(z\right),\text{\,where }\xi_{q}\left(z\right)=\xi\left(q+z\right)-z\xi'\left(q\right)-\xi\left(q\right),\label{eq: recent cover-1}
\end{equation}
(see Lemma \ref{lem: law of recentering}). In the covariance function
$\xi_{q}\left(z\right)$ the first and second order terms that appear
in (\ref{eq: xi z}) are removed by construction, so $H_{N}^{m_{\left(\alpha_{1}\right)}}\left(\hat{\sigma}\right)$
is a Hamiltonian for which an annealed upper bound can be tight in
the absence of an external field. However $H_{N}\left(\sigma\right)$
on the set $\tilde{D}$ consists as seen in (\ref{eq: recent hamilt sketch})
of not only the well-behaved $H_{N}^{m_{\left(\alpha_{1}\right)}}\left(\hat{\sigma}\right)$,
but also of a random mean $H_{N}\left(m_{\left(\alpha_{1}\right)}\right)$
and a random external field $\nabla H_{N}\left(m_{\left(\alpha_{1}\right)}\right)$.
With
\[
h_{\text{eff}}=\nabla H_{N}\left(m_{\left(\alpha_{1}\right)}\right),
\]
we get from (\ref{eq: recent hamilt sketch})
\[
E^{\tilde{D}}\left[\exp\left(\beta H_{N}\left(\sigma\right)\right)\right]=\exp\left(\beta H_{N}\left(m_{\left(\alpha_{1}\right)}\right)\right)E^{\tilde{D}}\left[\exp\left(\beta H_{N}^{m_{\left(\alpha_{1}\right)}}\left(\hat{\sigma}\right)+N\beta\left\langle h_{\text{eff}},\hat{\sigma}\right\rangle \right)\right].
\]
The presence of an external field in the integral on the RHS again
confirms that an annealed upper bound can not be tight. But if we
limit ourselves to the \emph{equator inside} $D_{\left(\alpha_{1}\right)}$,
namely
\begin{equation}
\mathcal{E}'=\left\{ \sigma\in D_{\left(\alpha_{1}\right)}:\left|\left\langle \hat{\sigma},h_{\text{eff}}\right\rangle \right|\le\eta\right\} ,\label{eq: e tilde equator}
\end{equation}
we can similarly to in (\ref{eq: eliminate ext field}) eliminate
the external field term via 
\begin{equation}
E^{\tilde{D}}\left[1_{\mathcal{E}'}\exp\left(\beta H_{N}^{m_{\left(\alpha_{1}\right)}}\left(\hat{\sigma}\right)+N\beta\left\langle h_{\text{eff}},\hat{\sigma}\right\rangle \right)\right]\overset{\eqref{eq: e tilde equator}}{\le}E^{\tilde{D}}\left[\exp\left(\beta H_{N}^{m_{\left(\alpha_{1}\right)}}\left(\hat{\sigma}\right)\right)\right]e^{N\beta\eta}.\label{eq: UB on heff eq}
\end{equation}
After this the annealed upper bound
\begin{equation}
\begin{array}{lcl}
E^{\tilde{D}}\left[\exp\left(\beta H_{N}^{m_{\left(\alpha_{1}\right)}}\left(\hat{\sigma}\right)\right)\right] & \le & \exp\left(\frac{\beta^{2}}{2}\text{Var}\left(H_{N}^{m_{\left(\alpha_{1}\right)}}\left(\hat{\sigma}\right)\right)+o\left(N\right)\right)\\
 & = & \exp\left(N\frac{\beta^{2}}{2}\xi_{q_{\left(\alpha_{1}\right)}}\left(1-q_{\left(\alpha_{1}\right)}\right)+o\left(N\right)\right),
\end{array}\label{eq: annealed UB}
\end{equation}
will be tight to leading order (for small $\beta$), where the $\hat{\sigma}$
on the RHS of the first line is an arbitrary $\hat{\sigma}\in\text{span}\left(m_{\left(\alpha_{1}\right)}\right)^{\bot}\cap S_{N-1}(\sqrt{1-\alpha_{1}^{2}})$
and
\begin{equation}
q_{\left(\alpha_{1}\right)}=\alpha_{1}^{2}=\|m_{\left(\alpha_{1}\right)}\|^{2}.\label{eq: q def}
\end{equation}

For the continuation of the construction is however more convenient
to define the equator not with respect to $h_{\text{eff}}$ as in
(\ref{eq: e tilde equator}) but with respect to a $\|\cdot\|$-unit
vector $u_{\left(\alpha_{1}\right),2}$ (different for each $\alpha_{1}$)
that is perpendicular to $u_{1}$ such that
\begin{equation}
\text{span}\left(u_{1},u_{\left(\alpha_{1}\right),2}\right)=\text{span}\left(u_{1},h_{\text{eff}}\right)=\text{span}\left(u_{1},\nabla H_{N}\left(m_{\left(\alpha_{1}\right)}\right)\right),\label{eq: span u1 u2}
\end{equation}
i.e.
\begin{equation}
\mathcal{E}_{\left(\alpha_{1}\right)}=\left\{ \sigma\in D_{\left(\alpha_{1}\right)}:\left|\left\langle \hat{\sigma},u_{\left(\alpha_{1}\right),2}\right\rangle \right|\le\eta\right\} .\label{eq: equator alpha1 def}
\end{equation}
For $\mathcal{E}_{\left(\alpha_{1}\right)}$ one also has as in (\ref{eq: UB on heff eq})
\begin{equation}
E^{\tilde{D}}\left[1_{\mathcal{E}_{\left(\alpha_{1}\right)}}\exp\left(\beta H_{N}^{m_{\left(\alpha_{1}\right)}}\left(\hat{\sigma}\right)+N\left\langle h_{\text{eff}},\hat{\sigma}\right\rangle \right)\right]\overset{\eqref{eq: equator alpha1 def}}{\le}E^{\tilde{D}}\left[\exp\left(\beta H_{N}^{m_{\left(\alpha_{1}\right)}}\left(\hat{\sigma}\right)\right)\right]e^{c\eta N},\label{eq: equator cont err}
\end{equation}
for a constant $c$ depending on $\beta$,  which with approximations
like (\ref{eq: D tilde}), (\ref{eq: approx}) and (\ref{eq: annealed UB})
with $\mathcal{E}_{\left(\alpha_{1}\right)}$ in place of $D_{\left(\alpha_{1}\right)}$
and a set $\tilde{\mathcal{E}}_{\left(\alpha_{1}\right)}=\left\{ \sigma:\left\langle \sigma,u_{1}\right\rangle =\alpha_{1},\left\langle \sigma,u_{\left(\alpha_{1}\right),2}\right\rangle =0\right\} $
in place of $\tilde{D}$ will be seen to give
\begin{equation}
\begin{array}{l}
Z_{N}\left(\mathcal{E}_{\left(\alpha_{1}\right)}\right)\le\exp\left(\beta H_{N}\left(m_{\left(\alpha_{1}\right)}\right)+N\beta h\alpha_{1}\right)\\
\quad\quad\quad\times E\left[\mathcal{E}_{\left(\alpha_{1}\right)}\right]\exp\left(N\frac{\beta^{2}}{2}\xi_{q_{\left(\alpha_{1}\right)}}\left(1-q_{\left(\alpha_{1}\right)}\right)+c\left(\varepsilon+\eta\right)N\right).
\end{array}\label{eq: ZN bound equator alpha1}
\end{equation}
By (\ref{eq: m alpha1}) and (\ref{eq: HNf sketch}) the first term
equals $\exp(\beta H_{N}^{f}(m_{(\alpha_{1})}))$. Turning to the
``entropy'' term $E[\mathcal{E}_{(\alpha_{1})}]$, note that $\mathcal{E}_{\left(\alpha_{1}\right)}$
is approximately a sphere of dimension $N-3$ of radius $\sqrt{1-\alpha_{1}^{2}}$
(the aforementioned $\tilde{\mathcal{E}}_{\left(\alpha_{1}\right)}$
is exactly this). Essentially since the surface area of spheres in
dimension $M$ scale like $r^{M}$, where $r$ is the radius, it turns
out that 
\begin{equation}
\begin{array}{ccccl}
E\left[\mathcal{E}_{\left(\alpha_{1}\right)}\right] & \le & \left(1-\alpha_{1}^{2}\right)^{\frac{N}{2}+o\left(N\right)} & \overset{\eqref{eq: q def}}{=} & \exp\left(\frac{N}{2}\log\left(1-\|m_{\left(\alpha_{1}\right)}\|^{2}\right)+o\left(N\right)\right)\\
 &  &  & \overset{\eqref{eq: spherical entropy I}}{=} & \exp\left(I_{{\rm sph}}\left(m_{\left(\alpha_{1}\right)}\right)+o\left(N\right)\right).
\end{array}\label{eq: equator entropy bound}
\end{equation}
Note finally that by (\ref{eq: Onsager term def}) and (\ref{eq: recent cover-1})
\begin{equation}
{\rm On}\left(q\right)=\xi_{q}\left(1-q\right)\text{\,for all }q\in\left[0,1\right],\label{eq: Onsager and recent xi}
\end{equation}
so the last factor on the right-hand side of (\ref{eq: ZN bound equator alpha1})
equals $\exp(\frac{\beta^{2}}{2}{\rm On}(\|m_{\left(\alpha_{1}\right)}\|^{2}))$.
In this fashion one can obtain from (\ref{eq: ZN bound equator alpha1})
that
\begin{equation}
\begin{array}{lcl}
Z_{N}\left(\mathcal{E}_{\left(\alpha_{1}\right)}\right) & \le & \exp\left(H_{N}^{f}\left(m_{\left(\alpha_{1}\right)}\right)+I_{{\rm sph}}\left(m_{\left(\alpha_{1}\right)}\right)+N\frac{\beta^{2}}{2}{\rm On}\left(\|m_{\left(\alpha_{1}\right)}\|^{2}\right)\right)e^{c\left(\eta+\varepsilon\right)N+o\left(N\right)}\\
 & \overset{\eqref{eq: HTAP spherical def}}{=} & \exp\left(H_{{\rm TAP}}^{{\rm sph}}\left(m_{\left(\alpha_{1}\right)}\right)+c\left(\eta+\varepsilon\right)N\right),
\end{array}\label{eq: equator tilde}
\end{equation}
which is a bound on the contribution of the equator $\mathcal{E}_{\left(\alpha_{1}\right)}$
inside $D_{\left(\alpha_{1}\right)}$ to the partition function in
terms of the TAP energy at $m=m_{\left(\alpha_{1}\right)}$. Note
that the entropy term $\frac{N}{2}\log(1-\|m\|^{2})$ of $H_{{\rm TAP}}^{{\rm sph}}$
arose from the measure of the equator $\mathcal{E}_{\left(\alpha_{1}\right)}$
under the reference measure in (\ref{eq: equator entropy bound}),
and the Onsager term $\frac{\beta^{2}}{2}\text{On}\left(\|m\|^{2}\right)$
from the annealed upper bound for the partition function of the recentered
Hamiltonian $H_{N}^{m_{\left(\alpha_{1}\right)}}$ on $\mathcal{E}_{\left(\alpha_{1}\right)}$
as in (\ref{eq: annealed UB}).

Combining (\ref{eq: equator bound}) and (\ref{eq: equator tilde})
one arrives at the bound
\begin{equation}
Z_{N}\left(\mathcal{E}\cup\left(\bigcup_{\alpha_{1}\in I_{\varepsilon,\eta}}\mathcal{E}_{\left(\alpha_{1}\right)}\right)\right)\le\left\{ \exp\left(H_{{\rm TAP}}^{{\rm sph}}\left(0\right)\right)+\sum_{\alpha_{1}\in I_{\varepsilon,\eta}}\exp\left(H_{{\rm TAP}}^{{\rm sph}}\left(m_{\left(\alpha_{1}\right)}\right)\right)\right\} e^{c\left(\eta+\varepsilon\right)N}.\label{eq: part func bount after one iter}
\end{equation}
 This is an improvement on (\ref{eq: equator bound}), but to obtain
the desired bound on $Z_{N}=Z_{N}\left(S_{N-1}\right)$ we still need
to bound the contribution of the complement of the region $\mathcal{E}\cup(\bigcup_{\alpha_{1}\in I_{\varepsilon,\eta}}\mathcal{E}_{(\alpha_{1})})$.

Similarly to how we decomposed $\mathcal{E}^{c}$ using sets $D_{\left(\alpha_{1}\right)}$
in (\ref{eq: dalpha1 def})-(\ref{eq: sphere decomp with dalpha1}),
this complement can be decomposed into sets 
\begin{equation}
D_{\left(\alpha_{1},\alpha_{2}\right)}=\left\{ \sigma\in D_{\left(\alpha_{1}\right)}:\left\langle \sigma,u_{\left(\alpha_{1}\right),2}\right\rangle \in(\left|\alpha_{2}\right|,\left|\alpha_{2}\right|+\varepsilon]\times\text{sign}\left(\alpha_{2}\right)\right\} ,\label{eq: Dalpha2 def}
\end{equation}
so that
\begin{equation}
S_{N-1}=\mathcal{E}\cup\left(\bigcup_{\alpha_{1}\in I_{\varepsilon,\eta}}\mathcal{E}_{\left(\alpha_{1}\right)}\right)\cup\left(\bigcup_{\alpha_{1},\alpha_{2}\in I_{\varepsilon,\eta}}D_{\left(\alpha_{1},\alpha_{2}\right)}\right).\label{eq: SN covered after two steps}
\end{equation}
Letting
\begin{equation}
m_{\left(\alpha_{1},\alpha_{2}\right)}=\alpha_{1}u_{1}+\alpha_{2}u_{\left(\alpha_{1}\right),2}=m_{\left(\alpha_{1}\right)}+\alpha_{2}u_{\left(\alpha_{1}\right),2},\label{eq: m alpha 2}
\end{equation}
(cf. (\ref{eq: m alpha1})) and using the change of variables $\sigma=m_{\left(\alpha_{1},\alpha_{2}\right)}+\hat{\sigma}$
one can decompose the Hamiltonian on $D_{\left(\alpha_{1},\alpha_{2}\right)}$
as
\begin{equation}
H_{N}\left(\sigma\right)=H_{N}\left(m_{\left(\alpha_{1},\alpha_{2}\right)}\right)+\nabla H_{N}\left(m_{\left(\alpha_{1},\alpha_{2}\right)}\right)\cdot\hat{\sigma}+H_{N}^{m_{\left(\alpha_{1},\alpha_{2}\right)}}\left(\hat{\sigma}\right),\label{eq: decomp alpha1 alpha2}
\end{equation}
(cf. (\ref{eq: change of var in slice}) and (\ref{eq: recent hamilt sketch})).
Here $H_{N}^{m_{\left(\alpha_{1},\alpha_{2}\right)}}\left(\hat{\sigma}\right),\hat{\sigma}\in\text{span}\left(u_{1},u_{\left(\alpha_{1}\right),2}\right)\cap B_{N},$
is a Hamiltonian with covariance function without terms of order $0$
or $1$ but the presence of the effective external field $\nabla H_{N}\left(m_{\left(\alpha_{1},\alpha_{2}\right)}\right)$
again means that an annealed upper bound on $Z_{N}\left(D_{\left(\alpha_{1},\alpha_{2}\right)}\right)$
will not be tight for all $\alpha_{2}$. To improve the situation
we may define $u_{\left(\alpha_{1},\alpha_{2}\right),3}$ as a unit
vector perpendicular to $u_{1},u_{\left(\alpha_{1}\right),2}$ such
that
\begin{equation}
\text{span}\left(u_{1},u_{\left(\alpha_{1}\right),2},u_{\left(\alpha_{1},\alpha_{2}\right),3}\right)=\text{span}\left(u_{1},u_{\left(\alpha_{1}\right),2},\nabla H_{N}\left(m_{\left(\alpha_{1},\alpha_{2}\right)}\right)\right),\label{eq: span u1 u2 u3}
\end{equation}
(cf. (\ref{eq: span u1 u2})) and define the equator
\begin{equation}
\mathcal{E}_{\left(\alpha_{1},\alpha_{2}\right)}=\left\{ \sigma\in D_{\left(\alpha_{1},\alpha_{2}\right)}:\left|\left\langle \sigma-m_{\left(\alpha_{1},\alpha_{2}\right)},u_{\left(\alpha_{1},\alpha_{2}\right),3}\right\rangle \right|\le\eta\right\} \label{eq: equator alpha 1 alpha 2 def}
\end{equation}
inside $D_{\left(\alpha_{1},\alpha_{2}\right)}$, similarly to (\ref{eq: equator alpha1 def}).
On $\mathcal{E}_{\left(\alpha_{1},\alpha_{2}\right)}$ one can bound
the effective external field term in (\ref{eq: decomp alpha1 alpha2})
by $Nc\eta$, and apply the same annealed bounds as above in (\ref{eq: annealed UB})
on each region $\mathcal{E}_{\left(\alpha_{1},\alpha_{2}\right)}$.
Since also $\mathcal{E}_{\left(\alpha_{1},\alpha_{2}\right)}$ is
essentially a lower dimensional sphere as in (\ref{eq: equator entropy bound})
\begin{equation}
E\left[\mathcal{E}_{\left(\alpha_{1},\alpha_{2}\right)}\right]\le\exp\left(\frac{N}{2}\log\left(1-\|m_{\left(\alpha_{1},\alpha_{2}\right)}\|^{2}\right)\right)e^{o\left(N\right)}=\exp\left(I_{{\rm sph}}\left(m_{\left(\alpha_{1},\alpha_{2}\right)}\right)\right)e^{o\left(N\right)}.\label{eq: equator alpha 2 entropy bound}
\end{equation}
In this way one can obtain
\begin{equation}
\begin{array}{ccl}
Z_{N}\left(\mathcal{E}_{\left(\alpha_{1},\alpha_{2}\right)}\right) & \le & \exp\left(H_{N}^{f}\left(m_{\left(\alpha_{1},\alpha_{2}\right)}\right)+I_{{\rm sph}}\left(m_{\left(\alpha_{1},\alpha_{2}\right)}\right)+N\frac{\beta^{2}}{2}{\rm On}\left(\|m_{\left(\alpha_{1},\alpha_{2}\right)}\|^{2}\right)\right)e^{c\left(\eta+\varepsilon\right)N}\\
 & \overset{\eqref{eq: HTAP spherical def}}{=} & \exp\left(H_{{\rm TAP}}^{{\rm sph}}\left(m_{\left(\alpha_{1},\alpha_{2}\right)}\right)+c\left(\eta+\varepsilon\right)N\right),
\end{array}\label{eq: ZN equator alpha 2 bound}
\end{equation}
(cf. (\ref{eq: equator tilde})), and from this one can improve on
the bound (\ref{eq: part func bount after one iter}) to get
\begin{equation}
\begin{array}{l}
Z_{N}\left(\mathcal{E}\cup\left({\displaystyle \bigcup_{\alpha_{1}\in I_{\varepsilon,\eta}}}\mathcal{E}_{\left(\alpha_{1}\right)}\right)\cup\left({\displaystyle \bigcup_{\alpha_{1},\alpha_{2}\in I_{\varepsilon,\eta}}}\mathcal{E}_{\left(\alpha_{1},\alpha_{2}\right)}\right)\right)\le e^{c\left(\eta+\varepsilon\right)N}\times\\
\quad\left\{ \exp\left(H_{{\rm TAP}}^{{\rm sph}}\left(0\right)\right)+{\displaystyle \sum_{\alpha_{1}\in I_{\varepsilon,\eta}}}\exp\left(H_{{\rm TAP}}^{{\rm sph}}\left(m_{\left(\alpha_{1}\right)}\right)\right)+{\displaystyle \sum_{\alpha_{1},\alpha_{2}\in I_{\varepsilon,\eta}}}\exp\left(H_{{\rm TAP}}^{{\rm sph}}\left(m_{\left(\alpha_{1},\alpha_{2}\right)}\right)\right)\right\} .
\end{array}\label{eq: bound after two iter}
\end{equation}
We can naturally continue this construction for any number $M$
of iterations, giving rise for each $k=1,\ldots,M$ and $\alpha\in I_{\varepsilon,\eta}^{k}$
to
\begin{itemize}
\item a direction $u_{\alpha,k+1}$, such that with the notation $u_{\alpha,l}=u_{\left(\alpha_{1},\ldots,\alpha_{l-1}\right),l}$
and $m_{\alpha,l}=m_{\left(\alpha_{1},\ldots,,\alpha_{l}\right)}$
we have that
\begin{equation}
m_{\alpha,k+1}=\alpha_{1}u_{1}+\alpha_{2}u_{\alpha,2}+\ldots+\alpha_{k+1}u_{\alpha,k+1}=m_{\alpha,k}+\alpha_{k+1}u_{\alpha,k+1},\label{eq: m alpha l increment}
\end{equation}
(cf. (\ref{eq: m alpha1}) and (\ref{eq: m alpha 2})) and
\begin{equation}
\begin{array}{c}
u_{\alpha,1},\ldots,u_{\alpha,k+1}\text{\,is an orthonormal basis of }\\
\text{span}\left(u_{1},\nabla H_{N}\left(m_{\alpha,1}\right),\ldots,\nabla H_{N}\left(m_{\alpha,k}\right)\right),
\end{array}\label{eq: basis}
\end{equation}
(cf. (\ref{eq: span u1 u2}) and (\ref{eq: span u1 u2 u3})),
\item a set
\begin{equation}
D_{\alpha}=\left\{ \sigma\in S_{N-1}:\left\langle \sigma,u_{\alpha,l}\right\rangle \in(\left|\alpha_{l}\right|,\left|\alpha_{l}\right|+\varepsilon]\times\text{sign}\left(\alpha_{l}\right),l=1,\ldots,k\right\} ,\label{eq: Dalpha def sketch}
\end{equation}
(cf. (\ref{eq: dalpha1 def}) and (\ref{eq: Dalpha2 def}))
\item and an equator (cf. (\ref{eq: first equator}) (\ref{eq: equator alpha1 def})
and (\ref{eq: equator alpha 1 alpha 2 def}))
\begin{equation}
\mathcal{E}_{\alpha}=\left\{ \sigma\in D_{\alpha}:\left|\left\langle \sigma-m_{\alpha},u_{\alpha,k+1}\right\rangle \right|\le\eta\right\} ,\label{eq: Ealpha def sketch}
\end{equation}
\end{itemize}
such that similarly to (\ref{eq: equator entropy bound}) and (\ref{eq: equator alpha 2 entropy bound})
\begin{equation}
E\left[\mathcal{E}_{\alpha}\right]\le\exp\left(\frac{N}{2}\log\left(1-\|m_{\alpha}\|^{2}\right)+o\left(N\right)\right)=\exp\left(I_{{\rm sph}}\left(m_{\alpha}\right)+o\left(N\right)\right),\label{eq: spherical entropy}
\end{equation}
(for $M$ growing slowly with $N$) and similarly to (\ref{eq: equator tilde})
and (\ref{eq: ZN equator alpha 2 bound})
\begin{equation}
Z_{N}\left(\mathcal{E}_{\alpha}\right)\le\exp\left(H_{{\rm TAP}}^{{\rm sph}}\left(m_{\alpha}\right)+c\left(M\varepsilon+\eta\right)N\right),\label{eq: energy of equator}
\end{equation}
(the formal definitions appear in Definitions \ref{def: space of incr},
\ref{def: def of m}, \ref{def: def of sets}). In (\ref{eq: energy of equator})
it is crucial that while the error involving $\varepsilon$ compounds
at most $M$ times since it comes from ``continuity'' errors as
in (\ref{eq: approx}) in $k\le M$ dimensions (see (\ref{eq: Dalpha def sketch})),
the error involving $\eta$ comes only from one dimension as in (\ref{eq: equator cont err})
(see (\ref{eq: Ealpha def sketch})) and is therefore not multiplied
by $M$ (see Lemma \ref{lem: to thin slice}).

Furthermore using the notations $m_{\left(\right)}=0$ and $\mathcal{E}_{\left(\right)}=\mathcal{E}$
one has 
\begin{equation}
S_{N-1}=\left(\cup_{k=0}^{M}\bigcup_{\alpha\in I_{\varepsilon,\eta}^{k}}\mathcal{E}_{\alpha}\right)\cup\left(\bigcup_{\alpha\in I_{\varepsilon,\eta}^{M+1}}D_{\alpha}\right),\label{eq: sphere covered with D M plus 1}
\end{equation}
(cf. (\ref{eq: sphere decomp with dalpha1}), (\ref{eq: SN covered after two steps}))
and as a simple consequence of (\ref{eq: energy of equator}) the
bound
\begin{equation}
Z_{N}\left(\cup_{k=0}^{M}\bigcup_{\alpha\in I_{\varepsilon,\eta}^{k}}\mathcal{E}_{\alpha}\right)\le e^{c\left(M\varepsilon+\eta\right)N}\sum_{k=0}^{M}\sum_{\alpha\in I_{\varepsilon,\eta}^{k}}\exp\left(H_{{\rm TAP}}^{{\rm sph}}\left(m_{\alpha}\right)\right),\label{eq: bound after K}
\end{equation}
for the contribution of the first set on the RHS of (\ref{eq: sphere covered with D M plus 1})
to the partition function. Seemingly, the problem remains that we
have no bound for the contribution $Z_{N}(\cup_{\alpha\in I_{\varepsilon,\eta}^{M+1}}D_{\alpha})$
of the second set on the RHS of (\ref{eq: sphere covered with D M plus 1}).
But we now argue that for large enough $M$, the sets $D_{\alpha},\alpha\in I_{\varepsilon,\eta}^{M+1}$
are in fact empty. This is because if $\alpha\in I_{\varepsilon,\eta}^{M+1}$
then in the construction we have the recursively defined basis from
(\ref{eq: basis}) such that if $\sigma\in D_{\alpha}$ then $\left|\left\langle \sigma,u_{\alpha,k}\right\rangle \right|\ge\alpha_{k}\ge\frac{\eta}{2}$
for $k=1,\ldots,M$ (recall (\ref{eq: Dalpha def sketch}) and (\ref{eq: I eps eta grid})).
This implies that $\|\sigma\|^{2}\ge M\frac{\eta^{2}}{4}$. Recall
that $D_{\alpha}\subset S_{N-1}$, so any $\sigma\in D_{\alpha}$
satisfies $\|\sigma\|^{2}=1$. Thus if $M=c\eta^{-2}$ for $c$ large
enough so that $M\frac{\eta^{2}}{4}>1$ then we have $D_{\alpha}=\emptyset$,
and in fact
\begin{equation}
S_{N-1}=\cup_{0\le k\le c\eta^{-2}}\bigcup_{\alpha\in I_{\varepsilon,\eta}^{k}}\mathcal{E}_{\alpha}.\label{eq: sketch incl}
\end{equation}
It will then follow from (\ref{eq: bound after K}) that
\begin{equation}
Z_{N}\left(S_{N-1}\right)\le e^{c\left(\eta^{-2}\varepsilon+\eta\right)N}\sum_{0\le k\le c\eta^{-2}}\sum_{\alpha\in I_{\varepsilon,\eta}^{k}}\exp\left(H_{{\rm TAP}}^{{\rm sph}}\left(m_{\alpha}\right)\right).\label{eq: ZN final bound as sum}
\end{equation}
As the number of summands is bounded in $N$ this will imply that
\begin{equation}
Z_{N}\left(S_{N-1}\right)\le\exp\left(\sup_{m}H_{{\rm TAP}}^{{\rm sph}}\left(m\right)+c\left(\eta^{-2}\varepsilon+\eta\right)N+o\left(N\right)\right).\label{eq: sketch final}
\end{equation}
Since first $\eta$ and then $\varepsilon$ can be made arbitrarily
small this is the desired bound, cf. (\ref{eq: spherical main result}).
This sketch can be formalized to yield a proof of Theorem \ref{thm: main_thm_spherical}
in the case of linear external field.

\subsection{\label{subsec: general spike}General spike term}

It is straight-forward to adapt the argument to a general Lipschitz
spike term $f_{N}\left(\sigma\right)=f_{N}\left(P^{U}\sigma\right)$
for a linear subspace $U$ of dimension $K$. One starts the iteration
with a set of $\|\cdot\|$-orthonormal initial directions $u_{1},\ldots,u_{K}$
whose span is $U$, rather than just one initial direction $u_{1}$,
and in the first step decomposes $S_{N-1}$ into sets 
\begin{equation}
D_{\left(\alpha_{1}\right)}=\left\{ \sigma\in S_{N-1}:\left\langle \sigma,u_{l}\right\rangle \in(\left|\alpha_{1,l}\right|,\left|\alpha_{1,l}\right|+\varepsilon]\times\text{sign}\left(\alpha_{l}\right),l=1,\ldots,K\right\} ,\label{eq: Dalpha1 spike}
\end{equation}
for $\alpha_{1}=\left(\alpha_{1,1},\ldots,\alpha_{1,K}\right)\in\left(-1,1\right)^{K}$
instead of (\ref{eq: dalpha1 def}), and defines 
\[
m_{\left(\alpha_{1}\right)}=\alpha_{1,1}u_{1}+\ldots+\alpha_{1,K}u_{K},
\]
instead of (\ref{eq: m alpha1}). Instead of (\ref{eq: Dalpha1})
one now has
\[
Z_{N}\left(D_{\left(\alpha_{1}\right)}\right)\le E\left[1_{D_{\left(\alpha_{1}\right)}}\exp\left(\beta H_{N}\left(\sigma\right)\right)\right]e^{\beta f_{N}\left(m_{\left(\alpha_{1}\right)}\right)+c\varepsilon KN},
\]
by the Lipschitz assumption on $f_{N}$. The subsequent directions
$u_{\left(\alpha_{1}\right),2},u_{\left(\alpha_{1}\right),3},\ldots$
are constructed one at a time just as in Section \ref{subsec: sketch sphere}
except that they are chosen orthogonal to all of $u_{1},\ldots,u_{K}$
and not just $u_{1}$, and the equators $\mathcal{E}_{\alpha}$ are
constructed by the same formula (\ref{eq: Ealpha def sketch}) for
$k\ge1$. In this way one obtains
\[
S_{N-1}=\cup_{1\le k\le c\eta^{-2}}\bigcup_{\alpha\in I_{\varepsilon,\eta}^{K}\times I_{\varepsilon,\eta}^{k-1}}\mathcal{E}_{\alpha},
\]
rather than (\ref{eq: sketch incl}) and 
\begin{equation}
Z_{N}\left(S_{N-1}\right)\le e^{c\left(\eta^{-2}\varepsilon+\eta\right)N}\sum_{1\le k\le c\eta^{-2}}\sum_{\alpha\in I_{\varepsilon,\eta}^{K}\times I_{\varepsilon,\eta}^{k-1}}\exp\left(H_{{\rm TAP}}^{{\rm sph}}\left(m_{\alpha}\right)\right),\label{eq: final with non-linear spike}
\end{equation}
rather than (\ref{eq: ZN final bound as sum}), which implies (\ref{eq: sketch final})
also for a general spike term. This sketch can be formalized to yield
a proof of Theorem \ref{thm: main_thm_spherical} for such a general
spike.

\subsection{\label{subsec: ising}Ising (or general) reference measure $E$}

In the sketch above the TAP energy for the spherical model arose essentially
because the entropy estimate (\ref{eq: equator entropy bound}) holds
for the spherical reference measure $E$. If $E$ is e.g. the Ising
reference measure (uniform measure on $\left\{ -1,1\right\} ^{N}$)
then to obtain an upper bound like (\ref{eq: equator tilde}) with
$H_{{\rm TAP}}^{{\rm Ising}}$ instead of $H_{{\rm TAP}}^{{\rm sph}}$
one should instead have that $E\left[\mathcal{E}_{\alpha}\right]$
is at most
\[
\exp\left(I_{{\rm Ising}}\left(m\right)\right)=\exp\left(-\sum_{i=1}^{N}J\left(m_{i}\right)+o\left(N\right)\right),
\]
where $J$ is the binary entropy from (\ref{eq: binary entropy def J}).
Such an upper bound however does not hold for sets $\mathcal{E}_{\alpha}$
as defined in e.g. (\ref{eq: equator alpha1 def}) of the sketch above:
a simple demonstration of this phenomenon is the fact that 
\[
E\left[\left\{ \sigma:\left\langle \sigma-m,m\right\rangle \approx0\right\} \right]\gg\exp\left(-\sum_{i=1}^{N}J\left(m_{i}\right)\right),
\]
for $E$ the uniform measure on $\left\{ -1,1\right\} ^{N}$, except
when $m_{1}=\ldots=m_{N}$. However, it can be shown (under appropriate
technical conditions) that there exists a unit vector $\lambda=\lambda_{m}$
such that
\begin{equation}
E\left[\left\{ \sigma:\left\langle \sigma-m,\lambda\right\rangle \approx0\right\} \right]\le\exp\left(-\sum_{i=1}^{N}J\left(m_{i}\right)+o\left(N\right)\right),\label{eq: halfplane}
\end{equation}
(see Lemma \ref{lem: ising entropy lemma}). This suggests the way
to extend the sketch in Sections \ref{subsec: sketch sphere}-\ref{subsec: general spike}
above to Ising reference measures: in each step of the iteration one
must decompose not only as in (\ref{eq: m alpha 2}) and (\ref{eq: m alpha l increment})
in one new direction $u_{\alpha,k+1}=u_{\alpha,k+1,1}$ whose span
with the previous vectors includes $\nabla H_{N}\left(m_{\alpha,k}\right)$;
rather one must include a second direction $u_{\alpha,k+1,2}$ whose
span with the others includes also $\lambda_{m_{\alpha}}$. Thus we
let each $\alpha_{k},k\ge2,$ be a pair of numbers in $I_{\varepsilon,\eta}^{2}$
that dictate an increment in $\text{span}(u_{\alpha,k,1},u_{\alpha,k,2})$
so that
\[
m_{\alpha,k+1}=m_{\alpha,k}+\alpha_{k,1}u_{\alpha,k+1,1}+\alpha_{k,2}u_{\alpha,k+1,2},
\]
and define the sets $D_{\alpha}$ for $\alpha\in I_{\varepsilon,\eta}^{K}\times(I_{\varepsilon,\eta}^{2})^{k-1}$
with respect to both the directions as
\[
D_{\alpha}=\left\{ \sigma\in D_{\left(\alpha_{1}\right)}:\left\langle \sigma,u_{\alpha,l,j}\right\rangle \in(\left|\alpha_{l,j}\right|,\left|\alpha_{l,j}\right|+\varepsilon]\times\text{sign}\left(\alpha_{l,j}\right),l=2,\ldots,k,j=1,2\right\} ,
\]
(for $D_{\left(\alpha_{1}\right)}$ as in (\ref{eq: Dalpha1 spike}))
and similarly for the the equators
\[
\mathcal{E}_{\alpha}=\left\{ \sigma\in D_{\alpha}:\left|\left\langle \sigma-m_{\alpha},u_{\alpha,k+1,j}\right\rangle \right|\le\eta\text{\,for }j=1,2\right\} 
\]
(cf. (\ref{eq: Ealpha def sketch})). The equator $\mathcal{E}_{\alpha}$
is then contained in $\left\{ \sigma:\left\langle \sigma-m_{\alpha},\lambda_{m_{\alpha}}\right\rangle \approx0\right\} $,
which means that by (\ref{eq: halfplane}) 
\[
E\left[\mathcal{E}_{\alpha}\right]\le\exp\left(-\sum_{i=1}^{N}J\left(\left(m_{\alpha}\right)_{i}\right)+o\left(N\right)\right)=\exp\left(I_{{\rm Ising}}\left(m_{\alpha}\right)+o\left(N\right)\right)
\]
instead of (\ref{eq: spherical entropy}), and then
\[
\begin{array}{ccl}
Z_{N}\left(\mathcal{E}_{\alpha}\right) & \le & \exp\left(\beta H_{N}^{f}\left(m_{\alpha}\right)+I_{{\rm Ising}}\left(m_{\alpha}\right)+\frac{\beta^{2}}{2}{\rm On}\left(\|m_{\alpha}\|^{2}\right)+c\left(M\varepsilon+\eta\right)N\right)\\
 & \overset{\eqref{eq: HTAP ising def}}{=} & \exp\left(H_{{\rm TAP}}^{\text{Ising}}\left(m_{\alpha}\right)+c\left(M\varepsilon+\eta\right)N\right),
\end{array}
\]
instead of (\ref{eq: energy of equator}). In this way the sketch
in Sections \ref{subsec: sketch sphere}-\ref{subsec: general spike}
above can be modified to yield a proof of the inequality
\[
Z_{N}\left(S_{N-1}\right)\le e^{c\left(\eta^{-2}\varepsilon+\eta\right)N}\sum_{1\le k\le c\eta^{-2}}\sum_{\alpha\in I_{\varepsilon,\eta}^{K}\times I_{\varepsilon,\eta}^{2\left(k-1\right)}}\exp\left(H_{{\rm TAP}}^{\text{Ising}}\left(m_{\alpha}\right)\right),
\]
instead of (\ref{eq: final with non-linear spike}). This then implies
(\ref{eq: sketch final}) with $H_{{\rm TAP}}^{\text{Ising}}$ instead
of $H_{{\rm TAP}}^{{\rm sph}}$. This sketch can be formalized into
a direct proof of Theorem \ref{thm: main_thm_ising}.

Alternatively - and this is the approach taken below - the sketch
can be turned into a formal proof of the TAP upper bound Theorem \ref{thm: main_thm_general}
for a general reference measure $E$ by defining as in (\ref{eq: IN})
the entropy term $I_{E,\delta}\left(m\right)$ of the general TAP
energy essentially as the LHS of (\ref{eq: halfplane}) minimized
over $\lambda$. From this general result the spherical and Ising
bounds can be derived by bounding $I_{E,\delta}$ above by $I_{{\rm sph}}$
and $I_{{\rm Ising}}$ respectively.

\section{\label{sec: law of recent}The recentered Hamiltonian and its law}

In this section we formally define the recentered Hamiltonian and
study its law.

Recall that $H_{N}\left(\sigma\right),\sigma\in B_{N}$ is a Gaussian
process with covariance given by (\ref{eq: covar}) for a power series
$\xi$ with non-negative coefficients and $\xi\left(1\right)<\infty$.
By Lemma \ref{lem: energy and first deriv covar} in the appendix
the process $H_{N}\left(\sigma\right),\sigma\in B_{N},$ exists and
is almost surely differentiable on $B_{N}^{o}$.

For any $m\in B_{N}$ and $\hat{\sigma}\in\mathbb{R}^{N}$ with $\|\hat{\sigma}\|^{2}\le1-\|m\|^{2}$
we define the recentered Hamiltonian by
\begin{equation}
H_{N}^{m}\left(\hat{\sigma}\right)=H_{N}\left(m+\hat{\sigma}\right)-\nabla H_{N}\left(m\right)\cdot\hat{\sigma}-H_{N}\left(m\right),\label{eq: recentered hamil}
\end{equation}
so that
\begin{equation}
H_{N}\left(m+\hat{\sigma}\right)=H_{N}\left(m\right)+\nabla H_{N}\left(m\right)\cdot\hat{\sigma}+H_{N}^{m}\left(\hat{\sigma}\right).\label{eq: decompostion}
\end{equation}
We furthermore define the effective external field
\begin{equation}
h_{{\rm eff}}\left(m\right)=\nabla H_{N}\left(m\right).\label{eq: heff def}
\end{equation}
Note that we then have
\begin{equation}
H_{N}^{f}\left(m+\hat{\sigma}\right)=H_{N}^{f}\left(m\right)+h_{{\rm eff}}\left(m\right)\cdot\hat{\sigma}+H_{N}^{m}\left(\hat{\sigma}\right)+\left(f_{N}\left(m+\hat{\sigma}\right)-f_{N}\left(m\right)\right),\label{eq: decomp with ext field}
\end{equation}
and since $f_{N}\left(\sigma\right)=f_{N}\left(P^{U}\sigma\right)$
we have for $\hat{\sigma}\in U^{\bot}$
\[
H_{N}^{f}\left(m+\hat{\sigma}\right)=H_{N}^{f}\left(m\right)+h_{{\rm eff}}\left(m\right)\cdot\hat{\sigma}+H_{N}^{m}\left(\hat{\sigma}\right).
\]

\begin{rem}
If $f_{N}$ is differentiable its recentering $f_{N}^{m}$ and the
recentering $(H_{N}^{f})^{m}$ of $H_{N}^{f}$ are well-defined. Then
an elegant alternative way to write this decomposition is to define
\[
h_{{\rm eff}}\left(m\right)=\nabla H_{N}^{f}\left(m\right)=\nabla H_{N}\left(m\right)+\nabla f_{N}\left(m\right),
\]
so that (\ref{eq: decomp with ext field}) can be replaced by 
\begin{equation}
H_{N}^{f}\left(m+\hat{\sigma}\right)=H_{N}^{f}\left(m\right)+h_{{\rm eff}}\left(m\right)\cdot\hat{\sigma}+\left(H_{N}^{f}\right)^{m}\left(\hat{\sigma}\right).\label{eq: decomp with f deriv}
\end{equation}
Ultimately in the proof we will apply (\ref{eq: decomp with ext field})
only when $P^{U}\hat{\sigma}$ is small enough to make $f_{N}\left(m+\hat{\sigma}\right)-f_{N}\left(m\right)$,
$f_{N}^{m}\left(\hat{\sigma}\right)$  and $\nabla f_{N}\left(m\right)\cdot\hat{\sigma}$
all negligible. As the benefit of writing (\ref{eq: decomp with f deriv})
is purely aesthetic, and it has the drawback of requiring the assumption
that $f_{N}$ is differentiable rather than Lipschitz we use (\ref{eq: decomp with ext field})
instead. \qed
\end{rem}

If $r>0$ and $\Sigma\subset B_{N}\left(r\right)$ we say that a centered
Gaussian process $\left(g\left(\sigma\right)\right)_{\sigma\in\Sigma}$
has covariance function $\xi:\left[-r^{2},r^{2}\right]\to[0,\infty)$
if
\begin{equation}
\mathbb{E}\left[g\left(\sigma\right)g\left(\sigma'\right)\right]=N\xi\left(\left\langle \sigma,\sigma'\right\rangle \right)\text{\,for all }\sigma,\sigma'\in\Sigma.\label{eq: g covar}
\end{equation}
If $V\subset\mathbb{R}^{N}$ is a linear space, $\Sigma=V\cap B_{N}\left(r\right)$
and $g$ is differentiable on $V\cap B_{N}^{\circ}\left(r\right)$
we define the recentering of $g^{m}$ (generalizing (\ref{eq: recentered hamil}))
by
\begin{equation}
g^{m}\left(\hat{\sigma}\right)=g\left(m+\hat{\sigma}\right)-\nabla g\left(m\right)\cdot\hat{\sigma}-g\left(m\right)\text{\,for }\hat{\sigma}\in\Sigma-m,\label{eq: recent general}
\end{equation}
where the gradient $\nabla g\left(m\right)$ is understood to be a
vector in $V$. If $\xi:\left[-r^{2},r^{2}\right]\to[0,\infty)$ is
a covariance function and $q\in\left[0,r^{2}\right]$ we define
\begin{equation}
\begin{array}{l}
\xi_{q}:\left[-\left(r^{2}-q\right),r^{2}-q\right]\to[0,\infty),\\
\xi_{q}\left(z\right)=\xi\left(q+z\right)-\xi'\left(q\right)z-\xi\left(q\right).
\end{array}\label{eq: xi recentered}
\end{equation}

The next lemma shows that the components of the decomposition in (\ref{eq: recent general})
are independent on the ``slice'' perpendicular to $m$, and gives
the law of the recentered process $g^{m}$ on the slice. It follows
by computing covariances of $g$ and its partial derivatives, and
the definition (\ref{eq: recent general}). Let $\partial_{r}$ denote
the radial derivative.
\begin{lem}
\label{lem: law of recentering}Let $N\ge1,r>0,V\subset\mathbb{R}^{N}$
be a linear space and let $\left(g\left(\sigma\right)\right)_{\sigma\in\Sigma}$
for $\Sigma=V\cap B_{N}\left(r\right)$ be a centered Gaussian field
with covariance function $\xi$, differentiable on $V\cap B_{N}^{\circ}\left(r\right)$.
For any $m\in\Sigma$ let 
\begin{equation}
\tilde{\Sigma}=\left\{ \hat{\sigma}\in\Sigma-m:\hat{\sigma}\cdot m=0\right\} =\tilde{V}\cap B_{N}\left(\sqrt{r^{2}-\|m\|^{2}}\right),\label{eq: sigma tilde}
\end{equation}
for the linear space $\tilde{V}=V\cap\left\{ \hat{\sigma}:\hat{\sigma}\cdot m=0\right\} $.
Then the three objects
\begin{equation}
\bigg(g\left(m\right),\partial_{r}g\left(m\right)\bigg),\quad P^{\text{span}\left(m\right)^{\bot}}\nabla g\left(m\right),\quad\left(g^{m}\left(\hat{\sigma}\right)\right)_{\hat{\sigma}\in\tilde{\Sigma}}\text{ are independent},\label{eq: three procs}
\end{equation}
and $\left(g^{m}\left(\hat{\sigma}\right)\right)_{\hat{\sigma}\in\tilde{\Sigma}}$
is a centered Gaussian process with covariance function $\xi_{\|m\|^{2}}$.
\end{lem}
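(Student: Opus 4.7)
The plan is to exploit that all three objects in \eqref{eq: three procs} are linear functions of the Gaussian field $g$ and its gradient at $m$, and hence jointly Gaussian. Consequently, the independence claim reduces to verifying that each pair has vanishing covariance, and the distributional claim about $g^m$ reduces to computing a single covariance. To avoid dealing with gradients intrinsic to the subspace $V$, I would first pass to an ambient centered Gaussian extension of $g$ to $B_N(r)$ with the same covariance $\xi$ (which exists since $\xi$ is a genuine covariance function on $[-r^2,r^2]$), perform all computations there, and then observe that the $V$-gradient $\nabla g(m)$ in the statement is simply $P^V$ applied to the ambient gradient; since every $\hat\sigma\in\tilde\Sigma$ already lies in $V$, the inner product $\nabla g(m)\cdot\hat\sigma$ is unaffected by this projection.

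The second step is the basic covariance calculus. Differentiating the identity $\mathbb{E}[g(\sigma)g(\sigma')]=N\xi(\langle\sigma,\sigma'\rangle)$ in $\sigma'$ and using $\langle\cdot,\cdot\rangle = \tfrac{1}{N}\,\cdot\,$ (so that $\nabla_{\sigma'}\langle\sigma,\sigma'\rangle = \sigma/N$) yields
\[
\mathbb{E}\bigl[g(\sigma)\,\nabla g(\sigma')\bigr]=\xi'\!\bigl(\langle\sigma,\sigma'\rangle\bigr)\,\sigma,
\]
and a second differentiation in $\sigma$ gives
\[
\mathbb{E}\bigl[\nabla g(\sigma)\,\nabla g(\sigma')^{T}\bigr]=\tfrac{1}{N}\,\xi''\!\bigl(\langle\sigma,\sigma'\rangle\bigr)\,\sigma'\sigma^{T}+\xi'\!\bigl(\langle\sigma,\sigma'\rangle\bigr)\,I.
\]
(One must verify that $\mathbb{E}$ and $\nabla$ commute, which follows from the standard Gaussian $L^2$-convergence of difference quotients; this is ensured by the appendix estimates on $H_N$ and $\nabla H_N$.) Evaluating at $\sigma=\sigma'=m$ shows that $\mathbb{E}[g(m)\nabla g(m)]=\xi'(\|m\|^{2})m$ is parallel to $m$, so $g(m)$ is uncorrelated with $P^{\mathrm{span}(m)^{\perp}}\nabla g(m)$; and the covariance of $\partial_r g(m)$ with $P^{\mathrm{span}(m)^{\perp}}\nabla g(m)$ vanishes because one is a linear functional of $\nabla g(m)$ along $m$ and the other of its orthogonal complement, and the covariance matrix of $\nabla g(m)$ has $m$ as an eigenvector.

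The third step is to verify, for any $\hat\sigma,\hat\sigma'\in\tilde\Sigma$ (so $\hat\sigma\cdot m=\hat\sigma'\cdot m=0$), the three identities
\[
\mathbb{E}[g^{m}(\hat\sigma)\,g(m)]=0,\qquad \mathbb{E}[g^{m}(\hat\sigma)\,\nabla g(m)]=0,\qquad \mathbb{E}[g^{m}(\hat\sigma)\,g^{m}(\hat\sigma')]=N\xi_{\|m\|^{2}}\!\bigl(\langle\hat\sigma,\hat\sigma'\rangle\bigr).
\]
Each is a direct expansion using the definition \eqref{eq: recent general} and the three covariance formulas above. The orthogonality $\hat\sigma\cdot m=0$ kills the terms $\xi'(\|m\|^2)m\cdot\hat\sigma$ and the $mm^{T}$-contribution from the Hessian-like identity, and what remains collapses by the definition \eqref{eq: xi recentered} of $\xi_{q}$ to $N\xi_{\|m\|^{2}}(\langle\hat\sigma,\hat\sigma'\rangle)$. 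The two vanishing identities give independence of $g^m$ from $g(m)$, $\partial_r g(m)$, and $P^{\mathrm{span}(m)^{\perp}}\nabla g(m)$ in the jointly Gaussian setting, completing the proof.

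The computations are entirely routine; the only point requiring a modicum of care is the bookkeeping of the $1/N$ factors introduced by the inner product convention $\langle a,b\rangle=\tfrac{1}{N}a\cdot b$, and the justification for differentiating under the expectation, which is standard and follows from the smoothness results collected in the appendix.
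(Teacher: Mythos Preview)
Your proposal is correct and takes essentially the same approach as the paper: both reduce everything to direct covariance computations for the jointly Gaussian family $\bigl(g(m),\nabla g(m),g(m+\hat\sigma)\bigr)$, using $\hat\sigma\cdot m=0$ to kill the cross terms. The only difference is cosmetic---the paper rotates so that $V$ is a coordinate subspace and $m$ lies along the first axis and then works componentwise, whereas you work coordinate-free via an ambient extension (a device which, strictly speaking, requires $\xi$ to be positive semi-definite on all of $B_N(r)$ rather than just on $V\cap B_N(r)$, but this holds in every application in the paper and is in any case bypassed by the paper's rotation).
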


\begin{proof}
The second equality in (\ref{eq: sigma tilde}) is elementary. Since
the covariance in (\ref{eq: g covar}) depends only on the inner product
we can by rotating $\mathbb{R}^{N}$ assume w.l.o.g. that $V$ is
the span of the first $\dim\left(V\right)$ basis vectors and $m$
is a multiple of $\left(1,\ldots,0\right)$. We then use the covariance
formulas for the partial derivatives of $g$ from Lemma \ref{lem: energy and first deriv covar}.
Since $m_{i}=0$ for $i\ge2$ one obtains from (\ref{eq: grad grad covar})
that $\partial_{i}g,i=2,\ldots,\dim\left(V\right)$ are IID with variance
$\xi'\left(\|m\|^{2}\right)$, and that $\partial_{r}g\left(m\right)=\partial_{1}g\left(m\right)$
is independent of $\partial_{i}g,i=2,\ldots,\dim\left(V\right)$.
Also for $\hat{\sigma}$ with $\hat{\sigma}\cdot m=0$ we obtain from
(\ref{eq: energy grad covar})
\begin{equation}
\mathbb{E}\left[g\left(m+\hat{\sigma}\right)\partial_{i}g\left(m\right)\right]=\left(\left|m\right|\delta_{i1}+\hat{\sigma}_{i}\right)\xi'\left(\|m\|^{2}\right)\text{ for }i=1,\ldots,\dim\left(V\right).\label{eq: h dih covar}
\end{equation}
This implies that $g\left(m\right)$ is independent of $P^{\text{span}\left(m\right)^{\bot}}\nabla g\left(m\right)$
by considering the case $\hat{\sigma}=0$. Thus we have shown that
$\left(g\left(m\right),\partial_{r}g\left(m\right)\right)$ and $P^{\text{span}\left(m\right)^{\bot}}\nabla g\left(m\right)$
are independent. Also any $\hat{\sigma}$ perpendicular to $m$
\[
\begin{array}{ccl}
\mathbb{E}\left[g^{m}\left(\hat{\sigma}\right)g\left(m\right)\right] & \overset{\eqref{eq: recent general}}{=} & \mathbb{E}\left[\left(g\left(m+\hat{\sigma}\right)-\nabla g\left(m\right)\cdot\hat{\sigma}-g\left(m\right)\right)g\left(m\right)\right]\\
 & \overset{\eqref{eq: h dih covar},\eqref{eq: g covar}}{=} & \xi\left(\|m\|^{2}\right)-\xi\left(\|m\|^{2}\right)=0,
\end{array}
\]
and similarly for $i=1,\ldots,\dim\left(V\right)$
\[
\mathbb{E}\left[g^{m}\left(\hat{\sigma}\right)\partial_{i}g\left(m\right)\right]\overset{\eqref{eq: recent general},\eqref{eq: h dih covar}}{=}\left(\left|m\right|\delta_{i1}+\hat{\sigma}_{i}\right)\xi'\left(\|m\|^{2}\right)-\hat{\sigma}_{i}\xi'\left(\|m\|^{2}\right)-\left|m\right|\delta_{i1}\xi'\left(\|m\|^{2}\right)=0.
\]
Thus the three objects in (\ref{eq: three procs}) are indeed independent.

Lastly one gets exploiting the independencies that for all $\hat{\sigma},\hat{\sigma}'$
perpendicular to $m$,
\[
\begin{array}{ccl}
\mathbb{E}\left[g^{m}\left(\hat{\sigma}\right)g^{m}\left(\hat{\sigma}'\right)\right] & \overset{\eqref{eq: recent general},\eqref{eq: three procs}}{=} & \mathbb{E}\left[g\left(m+\hat{\sigma}\right)g^{m}\left(\hat{\sigma}'\right)\right]\\
 & \overset{\eqref{eq: g covar},\eqref{eq: recent general},\eqref{eq: h dih covar}}{=} & N\xi\left(\|m\|^{2}+\left\langle \hat{\sigma},\hat{\sigma}'\right\rangle \right)-N\xi'\left(\|m\|^{2}\right)\left\langle \hat{\sigma},\hat{\sigma}'\right\rangle -N\xi\left(\|m\|^{2}\right)\\
 & \overset{\eqref{eq: xi recentered}}{=} & N\xi_{\|m\|^{2}}\left(\left\langle \hat{\sigma},\hat{\sigma}'\right\rangle \right).
\end{array}
\]
which shows that $\left(g^{m}\left(\hat{\sigma}\right)\right)_{\hat{\sigma}\in\tilde{\Sigma}}$
is a centered Gaussian process with covariance function $\xi_{\|m\|^{2}}$
as claimed.
\end{proof}
Later we will use the following natural relations for recentered processes
which follow directly from the definition (\ref{eq: recent general})
and hold for any $N\ge1,r>0$ and any process ${g:V\cap B_{N}\left(r\right)\to\mathbb{R}}$,
differentiable on $V\cap B_{N}^{\circ}\left(r\right)$, and any $a,b\in V\cap B_{N}^{\circ}\left(r\right),\hat{\sigma}\in V\cap B_{N}\left(r\right)$
such that $a+b\in V\cap B_{N}^{\circ}\left(r\right),a+b+\hat{\sigma}\in V\cap B_{N}\left(r\right)$:
\begin{equation}
\nabla g^{a}\left(b\right)=\nabla g\left(a+b\right)-\nabla g\left(a\right),\label{eq: recent grad}
\end{equation}
(where the gradient is understood to be a vector in $V$) and
\begin{equation}
\left(g^{a}\right)^{b}\overset{\eqref{eq: recent grad}}{=}g^{a+b}.\label{eq: recent ab a plus b}
\end{equation}
Similarly it follows from (\ref{eq: xi recentered}) that 
\begin{equation}
\left(\xi_{q}\right)_{q'}=\xi_{q+q'}\text{\,for all }\xi,q,q'.\label{eq: xi recent of recent}
\end{equation}

\section{\label{sec: magnetization construciton}Iterative construction of
magnetizations}

In this section we iteratively construct magnetizations in $B_{N}^{\circ}$
which will be used in the next section be used to construct a cover
of $S_{N-1}$.

Let 
\begin{equation}
I_{\varepsilon}=\varepsilon\mathbb{Z}\cap\left(-1,1\right),\label{eq: grid in set}
\end{equation}
be a grid of evenly spaced points at distance $\varepsilon$ covering
$\left(-1,1\right)$. For $\varepsilon>0$ we ``round down''
numbers in $\mathbb{R}$ to numbers in $\varepsilon\mathbb{Z}$ with
the operation
\begin{equation}
\lfloor x\rfloor_{\varepsilon}=\begin{cases}
\text{the }y\in\varepsilon\mathbb{Z}\text{ s.t. }x\in(y,y+\varepsilon] & \text{\,if }x>0,\\
0 & \text{ if }x=0,\\
\text{the }y\in\varepsilon\mathbb{Z}\text{ s.t. }x\in[y-\varepsilon,y) & \text{\,if }x<0.
\end{cases}\label{eq: rounding}
\end{equation}
Note that then
\begin{equation}
\left|x-\lfloor x\rfloor_{\varepsilon}\right|\le\varepsilon,\left|x\right|\ge\left|\lfloor x\rfloor_{\varepsilon}\right|\text{ for all }x\in\mathbb{R}\text{ and }\left|x\right|>\left|\lfloor x\rfloor_{\varepsilon}\right|\text{ for all }x\in\mathbb{R}\backslash\left\{ 0\right\} .\label{eq: rounding distance}
\end{equation}
Next we define the space of increments that will be used in the construction
of the cover. Recall that $K$ is the dimension of the linear space
$U\subset\mathbb{R}^{N}$.
\begin{defn}[Space of increments]
\label{def: space of incr}Define for each $1\le k\le\frac{N-K}{2}+1$
and $\varepsilon>0$
\begin{equation}
\mathcal{A}_{k}=\left\{ \alpha\in I_{\varepsilon}^{K}\times\left(I_{\varepsilon}^{2}\right)^{k-1}:\left|\alpha\right|<1\right\} ,\label{eq: def of Ak}
\end{equation}
(where $\left|\alpha\right|^{2}=\sum_{l=1}^{k}\left|\alpha_{k}\right|^{2}=\sum_{l=1}^{k}\sum_{j=1}^{A_{l}}\alpha_{k,j}^{2}$
for
\[
A_{1}=K\text{ and }A_{l}=2\text{ for }l\ge2),
\]
with the convention that $\left(I_{\varepsilon}^{2}\right)^{0}=\left\{ \left(\right)\right\} $,
where $\left(\right)$ is the sequence of length $0$. Also let
\[
\mathcal{A}=\bigcup_{1\le k\le\frac{N-K}{2}+1}\mathcal{A}_{k}.
\]
\qed
\end{defn}

In the decomposition of the sphere we will decompose in a direction
that is roughly speaking the normal of the hyperplane passing though
$m$ that has minimal measure under $E$ (cf. (\ref{eq: IN}), Section
\ref{subsec: ising}).We now define this normal $\lambda_{m}^{\delta}$
for any $m$.
\begin{defn}[Minimal entropy hyperplane around $m$]
\label{def: lambda}For any $m\in B_{N}$, $\lambda\in\mathbb{R}^{N}$
and $\delta>0$ let
\[
r_{\delta}\left(m,\lambda\right)=\log E\left[\left\langle \lambda,\sigma-m\right\rangle \ge-\delta\right],
\]
and 
\begin{equation}
\lambda_{m}^{\delta}\in\left\{ \lambda\in S_{N-1}:r_{\delta}\left(m,\lambda\right)\le\inf_{\tilde{\lambda}\in\mathbb{R}^{N}:\|\tilde{\lambda}\|=1}r_{\delta}\left(m,\tilde{\lambda}\right)+\delta\right\} ,\label{eq: lambda def}
\end{equation}
be picked according to some arbitrary $m$-measurable rule.\qed

Note that
\begin{equation}
\log E\left[\left\langle \lambda_{m}^{\delta},\sigma-m\right\rangle \ge-\delta\right]\le\inf_{\lambda:\|\lambda\|=1}r_{\delta}\left(m,\lambda\right)+\delta\overset{\eqref{eq: IN}}{\le}I_{E,\delta}\left(m\right)+\delta.\label{eq: lambda delta m and I E delta}
\end{equation}
\end{defn}

\begin{rem}
\label{rem: entropy min}For natural measures $E$ (like the uniform
measure on $\left\{ -1,1\right\} ^{N}$ or $S_{N-1}$) the infimum
in $\inf_{\tilde{\lambda}:\|\tilde{\lambda}\|=1}r_{\delta}(m,\tilde{\lambda})$
is achieved, and we could simply define $\lambda_{m}^{\delta}$ as
a minimizer. Furthermore for $E$ uniform on $S_{N-1}$ and $m\ne0$
the infimum is uniquely attained and in this case the parameter $\delta$
is unnecessary and one could define
\begin{equation}
\lambda_{m}^{\delta}=\lambda_{m}=\frac{m}{\|m\|}.\label{eq: spherical entropy min}
\end{equation}
In general however the infimum may not be achieved, and hence we define
$\lambda_{m}^{\delta}$ in (\ref{eq: lambda def}) as a vector that
almost achieves it.\qed
\end{rem}

The next definition gives an iterative construction of a magnetization
vector $m_{\alpha}$ from a vector $\alpha\in\mathcal{A}_{k}$ of
increment magnitudes lying in $I_{\varepsilon}$. It also constructs
an associated subspace $\overline{V}{}_{\alpha}$ of $\hat{\sigma}$
for which $\hat{\sigma}\cdot m_{\alpha}=0$. The crucial aspect of
the construction is that the next set of increments for the magnetization
covers the whole span of the gradient of the Hamiltonian and the normal
of the hyperplane of smallest entropy, both evaluated at the current
point of the iteration. Furthermore the increments are orthogonal
to the previous increments, and the subspace $\overline{V}{}_{\alpha}$
consists of vectors orthogonal to all previous increments and to the
gradient and minimal entropy hyperplane normal of $m_{\alpha}$.

Let
\begin{equation}
u_{1},\ldots,u_{K}\in\mathbb{R}^{N}\text{ be an }\left\langle \cdot,\cdot\right\rangle \text{-orthonormal basis of }U,\label{eq: U1 space}
\end{equation}
(chosen according to some arbitrary rule).
\begin{defn}[Magnetization $m_{\alpha}$ and basis $u_{\alpha,l,j}$ associated
to $\alpha\in\mathcal{A}_{k}$]
\label{def: def of m}$ $\\
Let $\varepsilon,\delta>0$, $1\le k\le\frac{N-K}{2}+1$ and $\alpha\in\mathcal{A}_{k}$
.  We define $\left(m_{\alpha,l}\right)_{1\le l\le k}$ and $\left(u_{\alpha,l}\right)_{1\le l\le k+1}$,
where $u_{\alpha,1}=\left(u_{\alpha,1,1},\ldots,u_{\alpha,1,K}\right)$
and $u_{\alpha,l}=\left(u_{\alpha,l,1},u_{\alpha,l,2}\right)$ for
$l\ge2$ and $u_{\alpha,l,j}\in\mathbb{R}^{N}$, via a recursion.
It starts with
\begin{equation}
u_{\alpha,1,j}=u_{j}\text{ for }j=1,\ldots,K,\label{eq: first us}
\end{equation}
and
\begin{equation}
U_{\alpha,1}=\text{span}\left(u_{\alpha,1,1},\ldots,u_{\alpha,1,K}\right)\text{ and }V_{\alpha,1}=U_{\alpha,1}^{\bot},\label{U alpha 1 def}
\end{equation}
and
\begin{equation}
m_{\alpha,1}=\sum_{j=1}^{K}\alpha_{1,j}u_{\alpha,1,j}.\label{eq: m0 u0 def}
\end{equation}

For the first step, consider the space
\begin{equation}
\text{span}\left(\nabla H_{N}\left(m_{\alpha,1}\right),\lambda_{m_{\alpha,1}}^{\delta}\right)\cap V_{\alpha,1}.\label{eq: space first step}
\end{equation}
Let $u_{\alpha,2,1},u_{\alpha,2,2}$ be an $\left\langle \cdot,\cdot\right\rangle $-orthonormal
basis of this space, or (if its dimension is less than two) of an
arbitrary two-dimensional subspace of $V_{\alpha,1}$ containing it,
and let
\begin{equation}
U_{\alpha,2}=\text{span}\left(U_{\alpha,1},u_{\alpha,2,1},u_{\alpha,2,2}\right)\text{ and }V_{\alpha,l}=U_{\alpha,l}^{\bot}.
\end{equation}
Then let
\begin{equation}
m_{\alpha,2}=m_{\alpha,1}+\alpha_{2,1}u_{\alpha,2,1}+\alpha_{2,2}u_{\alpha,2,2}.\label{eq: m def first step}
\end{equation}

For $3\le l\le k$ consider the space
\begin{equation}
\text{span}\left(\nabla H_{N}\left(m_{\alpha,l-1}\right),\lambda_{m_{\alpha,l-1}}^{\delta}\right)\cap V_{\alpha,l-1}.\label{eq: space-1}
\end{equation}
Let $u_{\alpha,l,1},u_{\alpha,l,2}$ be an $\left\langle \cdot,\cdot\right\rangle $-orthonormal
basis of a two-dimensional subspace of $V_{\alpha,l-1}$ containing
(\ref{eq: space-1}), and
\begin{equation}
U_{\alpha,l}=\text{span}\left(U_{\alpha,1},u_{\alpha,r,1},u_{\alpha,r,2}:2\le r\le l\right)\text{ and }V_{\alpha,l}=U_{\alpha,l}^{\bot}.\label{eq: U alpha l def}
\end{equation}
 Next, let
\begin{equation}
m_{\alpha,l}=m_{\alpha,l-1}+\alpha_{l,1}u_{\alpha,l,1}+\alpha_{l,2}u_{\alpha,l,2},\label{eq: m def}
\end{equation}
This recursively defines $\left(m_{\alpha,k}\right)_{1\le l\le k}$
and $\left(u_{\alpha,l}\right)_{1\le l\le k}$.

Define
\begin{equation}
m_{\alpha}=m_{\alpha,k},\label{eq: m alpha def}
\end{equation}
and
\begin{equation}
U_{\alpha}=U_{\alpha,k}\text{ and }V_{\alpha}=V_{\alpha,k}=U_{\alpha}^{\bot}.\label{eq: U alpha def}
\end{equation}
Finally define $u_{\alpha,k+1,1},u_{\alpha,k+1,2}$ as an $\left\langle \cdot,\cdot\right\rangle $-orthonormal
basis of a two-dimensional subspace of $V_{\alpha}$ containing 
\begin{equation}
\text{span}\left(\nabla H_{N}\left(m_{\alpha}\right),\lambda_{m_{\alpha}}^{\delta}\right)\cap V_{\alpha},\label{eq: space 2}
\end{equation}
and let $u_{\alpha,k+1}=\left(u_{\alpha,k+1,1},u_{\alpha,k+1,2}\right)$
and
\begin{equation}
\overline{U}_{\alpha}=U_{\alpha,k+1}=\text{span}\left(U_{\alpha},u_{\alpha,k+1,1},u_{\alpha,k+1,2}\right)\text{ and }\overline{V}_{\alpha}=V_{\alpha,k+1}=\overline{U}_{\alpha}^{\bot}.\label{eq: UV def}
\end{equation}
\qed
\end{defn}

\begin{rem}
$ $
\begin{enumerate}
\item For pedagogical reasons we carried out the first step of the iteration
in (\ref{eq: space first step})-(\ref{eq: m def first step}) explicitly,
but note that it's precisely the general step from (\ref{eq: space-1})-(\ref{eq: m def})
for $l=2$. Thus (\ref{eq: space-1})-(\ref{eq: m def}) in fact hold
for $2\le l\le k$, and not just $3\le l\le k$.
\item Whenever a basis of a space $\text{span}\left(a,b\right)$ for random
vectors $a,b$ is chosen above, it is understood to be chosen by a
rule which is measurable with respect to $a,b$.\qed
\end{enumerate}
\end{rem}

We collect here some simple consequences of this construction which
will be used in the proofs that follow. Recalling (\ref{eq: U1 space}),
(\ref{eq: space-1}) and the sentence below it, note that
\begin{equation}
u_{\alpha,l,j}\text{ for }1\le l\le k+1,1\le j\le A_{l},\text{ are }\left\langle \cdot,\cdot\right\rangle \text{-orthonormal for each }\alpha\in\mathcal{A}_{k},\label{eq: us orthonormal}
\end{equation}
and from also (\ref{eq: U alpha def})
\begin{equation}
U_{\alpha}=\text{span}\left(u_{\alpha,l,j}:1\le l\le k,1\le j\le A_{l}\right),\quad\quad V_{\alpha}=U_{\alpha}^{\bot}.\label{eq: Ualpha dep on u}
\end{equation}
We have for all $\alpha\in\mathcal{A}_{k}$ and $1\le l\le k$ that
\begin{equation}
m_{\alpha,l}\overset{\eqref{eq: m0 u0 def},\eqref{eq: m def}}{=}\sum_{r=1}^{l}\sum_{j=1}^{A_{l}}\alpha_{r,j}u_{\alpha,r,j}\overset{\eqref{eq: def of Ak},\eqref{eq: us orthonormal}}{\in}B_{N}^{\circ},\label{eq: m alpha l expansion}
\end{equation}
so that
\begin{equation}
m_{\alpha,l}\in U_{\alpha,l}=V_{\alpha,l}^{\bot}\text{ for }l=1,\ldots,k,\label{eq: m inc intermediate l}
\end{equation}
and in particular (recall (\ref{eq: m alpha def}))
\begin{equation}
m_{\alpha}=\sum_{l=1}^{k}\sum_{j=1}^{A_{l}}\alpha_{l,j}u_{\alpha,l,j}\in B_{N}^{\circ},\label{eq: m alpha expansion}
\end{equation}
and (recall (\ref{eq: U alpha def}))
\begin{equation}
m_{\alpha}\in U_{\alpha,k}=U_{\alpha}.\label{eq: m in U alpha k}
\end{equation}
Also
\begin{equation}
\nabla H_{N}\left(m_{\alpha,l}\right),\lambda_{m_{\alpha,l}}^{\delta}\in U_{\alpha,l+1}=V_{\alpha,l+1}^{\bot}\text{ for }l=1,\ldots,k,\label{eq: nabla H lambda m in U}
\end{equation}
(see (\ref{eq: space-1}) and below it, (\ref{eq: space 2}) and above
it and (\ref{eq: Ualpha dep on u})) and in particular 
\begin{equation}
h_{{\rm eff}}\left(m_{\alpha}\right)=\nabla H_{N}\left(m_{\alpha}\right),\lambda_{m_{\alpha}}^{\delta}\in\overline{U}{}_{\alpha}=\overline{V}{}_{\alpha}^{\bot},\label{eq: heff lambda m in U}
\end{equation}
where (see (\ref{eq: UV def}))
\begin{equation}
\begin{array}{ccccccc}
\overline{U}{}_{\alpha} & = & U_{\alpha,k+1} & = & \text{span}\left(u_{\alpha,l,j}:1\le l\le k+1,1\le j\le A_{l}\right) & \supset & U_{\alpha},\\
\overline{V}_{\alpha} & = & V_{\alpha,k+1} & = & U_{\alpha,k+1}^{\bot} & \subset & V_{\alpha},
\end{array}\label{eq: Ualpha k in terms of u}
\end{equation}
and the inclusions are strict. Also
\begin{equation}
\mathbb{R}^{N}=\overline{U}{}_{\alpha}\oplus\overline{V}{}_{\alpha}.\label{eq: orthog decomp overline}
\end{equation}
Furthermore, note from (\ref{eq: m def}) that for each $l\le k$
the vector $m_{\alpha,l}$ depends on $\alpha\in\mathcal{A}_{k}$
only through $\left(\alpha_{l}\right)_{1\le r\le l}$ and $u_{\alpha,l}$
depends on $\alpha$ only through $\left(\alpha_{l}\right)_{1\le r\le l-1}$.
Thus the bases are ``nested'' in the sense that for all $1\le l\le\left(k+1\right)\wedge\left(k'+1\right)$
\begin{equation}
\begin{array}{c}
\text{if }\alpha\in\mathcal{\mathcal{A}}_{k},\alpha'\in\mathcal{\mathcal{A}}_{k'}\text{ with }\alpha_{l}=\alpha_{l}^{'}\text{\,for }1\le r\le l-1,\\
\text{then }u_{\alpha,l,j}=u_{\alpha',l,j}\text{ for }1\le j\le A_{l}.
\end{array}\label{eq: nested}
\end{equation}
Lastly (from below (\ref{eq: space-1}) and above (\ref{eq: space 2}))
for $l=2,\ldots,k+1$
\begin{equation}
u_{\alpha,l}\text{ is measurable wrt. to }m_{\alpha,l-1}\text{\,and }P^{V_{\alpha,l-1}}\nabla H_{N}\left(m_{\alpha,l-1}\right).\label{eq: next u meas}
\end{equation}

\begin{rem}
\begin{enumerate}
\item Note that if we were treating only the case when $E$ is the uniform
measure on the sphere then we could replace $\lambda_{m}^{\delta}$
with $\lambda_{m}=\frac{m}{\left|m\right|}$ in the construction,
cf. (\ref{eq: spherical entropy min}). Since $m_{\alpha,l}\in U_{\alpha,l}$
the spaces (\ref{eq: space-1}), (\ref{eq: space 2}) then always
have dimension at most $1$. Thus in the spherical case the above
construction could be simplified by omitting $\alpha_{l,2}$ for all
$l\ge2$ and defining $\mathcal{A}_{k}$ as a subset of $I_{\varepsilon}^{K}\times I_{\varepsilon}^{k-1}$
rather than as in (\ref{eq: def of Ak}) (cf. the sketch in Sections
\ref{subsec: sketch sphere}, \ref{subsec: general spike}).
\item Though we find it more convenient not to use this fact, it is easy
to see that $\nabla H_{N}\left(m_{\alpha,l}\right)\in U_{\alpha,l}$
occurs with probability $0$ (except when $m_{\alpha,l}=0$), so that
with probability one the spaces (\ref{eq: space-1}), (\ref{eq: space 2})
have dimension at least $1$.\qed
\end{enumerate}
\end{rem}

Define
\begin{equation}
q_{\alpha}=\|m_{\alpha}\|^{2}\overset{\eqref{eq: us orthonormal},\eqref{eq: m alpha expansion}}{=}\left|\alpha\right|^{2}.\label{eq: q alpha def}
\end{equation}
Another important feature of the construction is that conditionally
on $u_{\alpha,l},l=1,\ldots,{k+1}$, the law of $H_{N}^{m_{\alpha}}\left(\hat{\sigma}\right)$
restricted to $\overline{V}_{\alpha}$ is that of a centered Gaussian
process with covariance function $\xi_{q_{\alpha}}$. For $\alpha\in\mathcal{A}_{1}$
this follows from Lemma \ref{lem: law of recentering} with $g=H_{N}$
and $m=m_{\alpha}=m_{\alpha,1}$. For $\alpha\in\mathcal{A}_{k},k\ge2,$
it essentially follows by repeatedly applying Lemma \ref{lem: law of recentering},
as we now show. Let
\begin{equation}
\mathcal{R}_{\alpha}=\sigma\left(P^{V_{\alpha,l}}\nabla H_{N}\left(m_{\alpha,l}\right),l=1,\ldots,k;u_{l},l=2,\ldots,k+1\right).\label{eq: R alpha def}
\end{equation}
Note that under $\mathbb{P}\left(\cdot|\mathcal{R}_{\alpha}\right)$
the objects $\overline{V}_{\alpha}$, $m_{\alpha}$ are deterministic
(recall (\ref{eq: m alpha expansion}), (\ref{eq: Ualpha k in terms of u})).
\begin{lem}
\label{lem: law of recentering alpha}For $1\le k\le\frac{N-K}{2}+1$
and $\alpha\in\mathcal{A}_{k}$ almost surely the $\mathbb{P}\left(\cdot|\mathcal{R}_{\alpha}\right)$-law
of $\left(H_{N}^{m_{\alpha}}\left(\hat{\sigma}\right)\right)_{\hat{\sigma}\in\overline{V}_{\alpha}\cap B_{N}\left(\sqrt{1-q_{\alpha}}\right)}$
is that of a centered Gaussian process with covariance function $\xi_{q_{\alpha}}$.
\end{lem}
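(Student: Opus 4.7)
The plan is to prove, by induction on $l\in\{1,\ldots,k\}$, a slightly strengthened statement. For $l=1,\ldots,k+1$ define
\[
\mathcal{G}_l = \sigma\bigl(P^{V_{\alpha,r}}\nabla H_N(m_{\alpha,r}),\; r=1,\ldots,l-1;\; u_{\alpha,r},\; r=2,\ldots,l\bigr),
\]
so that $\mathcal{G}_1$ is trivial and $\mathcal{G}_{k+1}=\mathcal{R}_\alpha$. The inductive claim at level $l$ is that, conditional on $\mathcal{G}_l$, the process $(H_N^{m_{\alpha,l}}(\hat\sigma))_{\hat\sigma\in V_{\alpha,l}\cap B_N(\sqrt{1-\|m_{\alpha,l}\|^2})}$ is a centered Gaussian process with covariance $\xi_{\|m_{\alpha,l}\|^2}$ and is moreover independent of $P^{V_{\alpha,l}}\nabla H_N(m_{\alpha,l})$. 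The lemma then follows from the case $l=k$: by the independence part of the IH at $l=k$, further conditioning on $P^{V_{\alpha,k}}\nabla H_N(m_{\alpha,k})$ and hence on $u_{\alpha,k+1}$ (which is a deterministic function of $m_{\alpha,k}$ and this projected gradient by \eqref{eq: next u meas}) does not alter the conditional law of $H_N^{m_\alpha}|_{V_\alpha}$, so this is also the $\mathcal{R}_\alpha$-conditional law; restriction to $\overline{V}_\alpha\subset V_\alpha$ then yields the lemma.

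The base case $l=1$ is handled by applying Lemma \ref{lem: law of recentering} directly with $V=\mathbb{R}^N$, $g=H_N$, $m=m_{\alpha,1}$, and restricting to $V_{\alpha,1}=U^\perp\subset\text{span}(m_{\alpha,1})^\perp$ (the inclusion holds since $m_{\alpha,1}\in U$), noting that $P^{V_{\alpha,1}}\nabla H_N(m_{\alpha,1})$ is a linear function of $P^{\text{span}(m_{\alpha,1})^\perp}\nabla H_N(m_{\alpha,1})$. For the inductive step from $l-1$ to $l$ (with $l\ge2$), by \eqref{eq: next u meas} the vector $u_{\alpha,l}$ is measurable with respect to $\mathcal{G}_{l-1}\vee\sigma(P^{V_{\alpha,l-1}}\nabla H_N(m_{\alpha,l-1}))=\mathcal{G}_l$, which combined with the independence part of the IH at $l-1$ shows that passing from $\mathcal{G}_{l-1}$ to $\mathcal{G}_l$ leaves the conditional law of $H_N^{m_{\alpha,l-1}}|_{V_{\alpha,l-1}}$ unchanged. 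Under $\mathcal{G}_l$ the increment $\Delta=m_{\alpha,l}-m_{\alpha,l-1}\in V_{\alpha,l-1}$ is deterministic, and I would apply Lemma \ref{lem: law of recentering} to the conditionally Gaussian process $g=H_N^{m_{\alpha,l-1}}$ on $V=V_{\alpha,l-1}$ with recentering $\Delta$. Using \eqref{eq: recent ab a plus b} to identify $(H_N^{m_{\alpha,l-1}})^\Delta=H_N^{m_{\alpha,l}}$ and \eqref{eq: xi recent of recent} to compute $(\xi_{\|m_{\alpha,l-1}\|^2})_{\|\Delta\|^2}=\xi_{\|m_{\alpha,l}\|^2}$ delivers the claimed conditional Gaussian law for $H_N^{m_{\alpha,l}}$ on $V_{\alpha,l-1}\cap\text{span}(\Delta)^\perp\cap B_N(\sqrt{1-\|m_{\alpha,l}\|^2})$, together with independence from $P^{V_{\alpha,l-1}\cap\text{span}(\Delta)^\perp}\nabla g(\Delta)$. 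Restricting to $V_{\alpha,l}\subset V_{\alpha,l-1}\cap\text{span}(\Delta)^\perp$ (which follows from $U_{\alpha,l}=\text{span}(U_{\alpha,l-1},\Delta)$) and decomposing $P^{V_{\alpha,l}}\nabla H_N(m_{\alpha,l})=P^{V_{\alpha,l}}\nabla H_N(m_{\alpha,l-1})+P^{V_{\alpha,l}}\nabla g(\Delta)$ via \eqref{eq: recent grad} — in which the first summand is $\mathcal{G}_l$-measurable and the second is a function of the gradient piece independent of the restricted process — yields the IH at $l$.

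The main subtlety is the bookkeeping at each induction step: one must verify that the new information added to the conditioning $\sigma$-algebra, namely $P^{V_{\alpha,l-1}}\nabla H_N(m_{\alpha,l-1})$ together with the derived $u_{\alpha,l}$, is either already measurable with respect to the previous $\sigma$-algebra or else independent of the restricted process under consideration, so that no information about $H_N^{m_{\alpha,l}}|_{V_{\alpha,l}}$ leaks into the conditioning. Once this is handled, the identification of the new covariance is purely mechanical via the composition identities \eqref{eq: recent ab a plus b} and \eqref{eq: xi recent of recent}, and the final restriction from $V_\alpha$ to $\overline{V}_\alpha\subset V_\alpha$ is immediate.
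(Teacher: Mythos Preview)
Your proposal is correct and follows essentially the same approach as the paper: induction over the construction steps, at each step invoking Lemma~\ref{lem: law of recentering} for the recentered process and using the composition identities \eqref{eq: recent ab a plus b}, \eqref{eq: xi recent of recent} to track the covariance, while carefully checking that the newly added conditioning information is either already measurable or independent of the remaining process. Your indexing (starting the induction at $l=1$ with the base case handled by a direct application of Lemma~\ref{lem: law of recentering}, and carrying the independence from $P^{V_{\alpha,l}}\nabla H_N(m_{\alpha,l})$ explicitly in the inductive hypothesis) differs cosmetically from the paper's (which starts at $l=0$ trivially and absorbs the independence into the next conditioning step), but the logical content is identical. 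One small imprecision: you write $U_{\alpha,l}=\mathrm{span}(U_{\alpha,l-1},\Delta)$, but in fact $U_{\alpha,l}=\mathrm{span}(U_{\alpha,l-1},u_{\alpha,l,1},u_{\alpha,l,2})$ and $\Delta$ spans at most a one-dimensional subspace of $\mathrm{span}(u_{\alpha,l,1},u_{\alpha,l,2})$; however, only the inclusion $V_{\alpha,l}\subset V_{\alpha,l-1}\cap\mathrm{span}(\Delta)^\perp$ is needed, and that does hold.
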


\begin{proof}
Let for $l=0,\ldots,k$,
\[
\mathcal{R}_{\alpha,l}=\sigma\left(P^{V_{\alpha,r}}\nabla H_{N}\left(m_{\alpha,r}\right),r=1,\ldots,l;u_{r},r=2,\ldots,l+1\right),
\]
and
\[
\Sigma_{l}=V_{\alpha,l+1}\cap B_{N}\left(\sqrt{1-q_{\alpha,l}}\right)\text{ where }q_{\alpha,l}=\|m_{\alpha,l}\|^{2},
\]
with the convention $m_{\alpha,0}=0$ and $q_{\alpha,0}=0$. We prove
by induction that for $l=0,\ldots,k$
\begin{equation}
\text{the }\mathbb{P}\left(\cdot|\mathcal{R}_{\alpha,l}\right)\text{-law of }H_{N}^{m_{\alpha,l}}\text{\,is a CGPD\ensuremath{\left(\Sigma_{l},\xi_{q_{\alpha,l}},q_{\alpha,l}\right)}}\label{eq: induction step}
\end{equation}
where ``CGPD$(\Sigma,\xi,q)$'' is a shorthand for ``centered Gaussian
process with index set $\Sigma\subset B_{N}\left(\sqrt{1-q}\right)$
and covariance function $\xi$, differentiable on $\Sigma\cap B_{N}^{\circ}\left(\sqrt{1-q}\right)$''.
The case $l=k$ implies the claim of the lemma.

The case $l=0$ of (\ref{eq: induction step}) holds trivially (note
that $H_{N}^{0}\left(\sigma\right)=H_{N}\left(\sigma\right)$ for
all $\sigma$, that $\mathcal{R}_{\alpha,0}$ is the trivial $\sigma$-algebra
and recall that $V_{\alpha,1}$ is deterministic).

To prove the induction step we will use that for $s=0,\ldots,k-1$
\begin{equation}
\nabla H_{N}\left(m_{\alpha,s+1}\right)-\nabla H_{N}\left(m_{\alpha,s}\right)=\nabla H_{N}^{m_{\alpha,s}}\left(m_{\alpha,s+1}-m_{\alpha,s}\right),\label{eq: deriv}
\end{equation}
which follows from (\ref{eq: recent grad})
\begin{equation}
H_{N}^{m_{\alpha,s+1}}\left(\sigma\right)=\left(H_{N}^{m_{\alpha,s}}\right)^{m_{\alpha,s+1}-m_{\alpha,s}}\left(\sigma\right)\text{ for all }\sigma\in B_{N}\left(\sqrt{1-q_{\alpha,s+1}}\right),\label{eq: process}
\end{equation}
which follows from (\ref{eq: recent ab a plus b}), both with $V=\mathbb{R}^{N},g=H_{N},a=m_{\alpha,s},b=m_{\alpha,s+1}-m_{\alpha,s}$.

Now assume that (\ref{eq: induction step}) holds for $l=s$. Note
that $m_{\alpha,s},u_{\alpha,s+1},m_{\alpha,s+1}V_{\alpha,s+1},\Sigma_{s}$
are deterministic under $\mathbb{P}(\cdot|\mathcal{R}_{\alpha,s})$.
Consider the process $g\left(\hat{\sigma}\right)=H_{N}^{m_{\alpha,s}}\left(\hat{\sigma}\right)$
for $\hat{\sigma}\in\Sigma_{l}$ under $\mathbb{P}(\cdot|\mathcal{R}_{\alpha,s})$.
By applying Lemma \ref{lem: law of recentering} to it with $r=\sqrt{1-q_{\alpha,s}}$,
$V=V_{\alpha,s+1}$ and
\[
m=m_{\alpha,s+1}-m_{\alpha,s}\overset{\eqref{eq: m def}}{\in}\text{span}\left(u_{\alpha,s+1,1},u_{\alpha,s+1,2}\right),
\]
 under $\mathbb{P}\left(\cdot|\mathcal{R}_{\alpha,s}\right)$ and
using that
\[
q_{\alpha,s}+\|m\|^{2}\overset{\eqref{eq: us orthonormal},\eqref{eq: m alpha l expansion}}{=}q_{\alpha,s+1}\text{ and }\left(\xi_{q_{\alpha,s}}\right)_{\|m\|^{2}}\overset{\eqref{eq: xi recent of recent}}{=}\xi_{q_{\alpha,s}+\|m\|^{2}}=\xi_{q_{\alpha,s+1}},
\]
we obtain that the process $(g^{m}\left(\hat{\sigma}\right),\hat{\sigma}\in\tilde{\Sigma})$
for 
\[
\tilde{\Sigma}=V_{\alpha,s+1}\cap\left\{ \hat{\sigma}:\hat{\sigma}\cdot m=0\right\} \cap B_{N}\left(\sqrt{1-q_{\alpha,s+1}}\right),
\]
is independent of $\nabla g\left(m\right)=P^{V_{\alpha,s+1}}\nabla H_{N}^{m_{\alpha,s}}\left(m\right)$
and is a CGPD$(\xi_{q_{\alpha,s+1}},\tilde{\Sigma},q_{\alpha,s+1})$.
By (\ref{eq: deriv}) we have that $\nabla g\left(m\right)$ and $P^{V_{\alpha,s+1}}\nabla H_{N}\left(m_{\alpha,s+1}\right)$
are deterministically related under $\mathbb{P}\left(\cdot|\mathcal{R}_{\alpha,s}\right)$,
so under this measure $(g^{m}\left(\hat{\sigma}\right),\hat{\sigma}\in\tilde{\Sigma})$
and $P^{V_{\alpha,s+1}}\nabla H_{N}\left(m_{\alpha,s+1}\right)$ are
independent. By (\ref{eq: next u meas}) the process $(g^{m}\left(\hat{\sigma}\right),\hat{\sigma}\in\tilde{\Sigma})$
is then independent of $\left(P^{V_{\alpha,s+1}}\nabla H_{N}\left(m_{\alpha,s+1}\right),u_{\alpha,s+2}\right)$
under $\mathbb{P}\left(\cdot|\mathcal{R}_{\alpha,s}\right)$. Thus
$(g^{m}\left(\hat{\sigma}\right),\hat{\sigma}\in\tilde{\Sigma})$
is a CGPD$(\xi_{q_{\alpha,s+1}},\tilde{\Sigma},q_{\alpha,s+1})$
also under $\mathbb{P}\left(\cdot|\mathcal{R}_{s+1}\right)$. Next
under this measure $u_{\alpha,s+2},V_{\alpha,s+2},\Sigma_{s+1}$ are
deterministic, and $\tilde{\Sigma}\subset V_{\alpha,s+1}\cap\{\hat{\sigma}:\hat{\sigma}\cdot u_{\alpha,s+1,j}=0,j=1,2\}\cap B_{N}(\sqrt{1-q_{\alpha,s+1}})=\Sigma_{s+1}$
(recall (\ref{eq: U alpha l def}) and thus $(g^{m}\left(\hat{\sigma}\right),\hat{\sigma}\in\Sigma_{s+1})$
is a CGPD$(\xi_{q_{\alpha,s+1}},\Sigma_{s+1},q_{\alpha,s+1})$  under
$\mathbb{P}\left(\cdot|\mathcal{R}_{s+1}\right)$. Finally by (\ref{eq: process})
this in fact means that $(H_{N}^{m_{\alpha,s+1}}\left(\hat{\sigma}\right),\hat{\sigma}\in\Sigma_{s+1})$
is CGPD$(\xi_{q_{\alpha,s+1}},\Sigma_{s+1},q_{\alpha,s+1})$  under
$\mathbb{P}\left(\cdot|\mathcal{R}_{s+1}\right)$. Thus (\ref{eq: induction step})
holds for $l=s+1$. This completes the proof of (\ref{eq: induction step})
by induction.
\end{proof}

\section{\label{sec: construction of cover}Construction of a cover of $S_{N-1}$}

In this section we construct ``equator'' sets $\mathcal{E}_{\alpha}$
that form a cover of $S_{N-1}$, each associated to a magnetization
vector $m_{\alpha}$ from the previous section. A crucial part of
the definition is the condition $\left|\left\langle \sigma,u_{\alpha,k+1,j}\right\rangle \right|\le\eta$,
which makes the effective external field on $\mathcal{E}_{\alpha}$
almost vanish, as will be proven in Proposition \ref{prop: effective external field B-1}
below, and also makes the entropy of the sets $\mathcal{E}_{\alpha}$
bounded by the entropy function $I_{E,\delta}$, as proven in Proposition
\ref{prop: entropy of Bs} below.
\begin{defn}
\label{def: def of sets}(Regions $\mathcal{E}_{\alpha}\subset D_{\alpha}$
of $S_{N-1}$ associated to $m_{\alpha}$) Fix $\varepsilon>0$ and
$\eta>0$. For each $1\le k\le\frac{N-K}{2}+1$ and $\alpha\in\mathcal{A}_{k}$
define the (random) sets
\begin{equation}
D_{\alpha}=\left\{ \sigma\in S_{N-1}:\lfloor\left\langle \sigma,u_{\alpha,l,j}\right\rangle \rfloor_{\varepsilon}=\alpha_{l,j}\text{\,for }1\le l\le k,1\le j\le A_{l}\right\} ,\label{eq: Dalpha def}
\end{equation}
and
\begin{equation}
\mathcal{E}_{\alpha}=\left\{ \sigma\in D_{\alpha}:\left|\left\langle \sigma,u_{\alpha,k+1,j}\right\rangle \right|\le\eta\text{ for }j=1,2\right\} .\label{eq: Ealpha def}
\end{equation}
\qed
\end{defn}

The next definition essentially constructs the cover, by specifying
the vectors $\alpha$ of the sets $\mathcal{E}_{\alpha}$ that should
be included.
\begin{defn}
(Index set $\mathcal{A}_{\varepsilon,\eta}$ of cover of $S_{N-1}$)
For any $0<\varepsilon,\eta<1$ let
\begin{equation}
\mathcal{A}_{\varepsilon,\eta}=\bigcup_{1\le k\le5\eta^{-2}}\mathcal{A}_{k}.\label{eq: Aeps  ddef}
\end{equation}
\qed
\end{defn}

Note from (\ref{eq: def of Ak}) and the fact that $\left|I_{\varepsilon}\right|\le2\varepsilon^{-1}$
(recall (\ref{eq: grid in set})) that $\mathcal{A}_{k}$ are finite
sets of cardinality bounded in $N$, and for $\varepsilon,\eta\in\left(0,1\right)$,
\begin{equation}
\left|\mathcal{A}_{k}\right|\le\left(2\varepsilon^{-1}\right)^{K+2\left(k-1\right)}\text{\,and }\left|\mathcal{A}_{\varepsilon,\eta}\right|\le\left(2\varepsilon^{-1}\right)^{K+10\eta^{-2}}.\label{eq: size of cover}
\end{equation}

We now show that $\left(\mathcal{E}_{\alpha}\right)_{\alpha\in\mathcal{A}_{\varepsilon,\eta}}$
is indeed a cover for $S_{N-1}$.
\begin{prop}
($\left(\mathcal{E}_{\alpha}\right)_{\alpha\in\mathcal{A}_{\varepsilon,\eta}}$
is cover of $S_{N-1}$) It holds for $0<\eta<1,0<\varepsilon\le\eta/2$
and all $\delta>0$ that \label{prop: cover}
\begin{equation}
S_{N-1}=\bigcup_{\alpha\in\mathcal{A}_{\varepsilon,\eta}}\mathcal{E}_{\alpha}\text{ almost surely.}\label{eq: cover inclusion}
\end{equation}
\end{prop}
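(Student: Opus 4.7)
The plan is to construct, for every $\sigma \in S_{N-1}$, an index $\alpha \in \mathcal{A}_{\varepsilon,\eta}$ with $\sigma \in \mathcal{E}_\alpha$, by the greedy iteration suggested by the sketch in Section~\ref{sec: sketch}. Initialize $\alpha_{1,j} := \lfloor\langle\sigma,u_j\rangle\rfloor_\varepsilon$ for $j=1,\ldots,K$. Given a candidate $\alpha^{(k)} = (\alpha_1,\ldots,\alpha_k) \in \mathcal{A}_k$, Definition~\ref{def: def of m} produces $u_{\alpha^{(k)},k+1,j}$ for $j=1,2$; if $|\langle\sigma,u_{\alpha^{(k)},k+1,j}\rangle|\le\eta$ for both $j$, stop and set $\alpha:=\alpha^{(k)}$, and otherwise extend by $\alpha_{k+1,j} := \lfloor\langle\sigma,u_{\alpha^{(k)},k+1,j}\rangle\rfloor_\varepsilon$ for $j=1,2$ and iterate.

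Three items need to be verified. First, $\alpha^{(k)} \in \mathcal{A}_k$ at every stage: orthonormality~(\ref{eq: us orthonormal}) combined with $|\alpha_{r,j}| \le |\langle\sigma,u_{\alpha^{(k)},r,j}\rangle|$ from~(\ref{eq: rounding distance}) yields
\[
|\alpha^{(k)}|^2 = \sum_{r,j} \alpha_{r,j}^2 \le \sum_{r,j} \langle\sigma,u_{\alpha^{(k)},r,j}\rangle^2 = \|P^{U_{\alpha^{(k)}}}\sigma\|^2 \le \|\sigma\|^2 = 1;
\]
the strict clause of~(\ref{eq: rounding distance}) forces strict inequality somewhere as soon as some $\langle\sigma,u_{\alpha^{(k)},r,j}\rangle \neq 0$, while in the remaining case $|\alpha^{(k)}| = 0$, so in either case $|\alpha^{(k)}| < 1$ and $\alpha^{(k)} \in \mathcal{A}_k$. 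In particular $m_{\alpha^{(k)}} \in B_N^\circ$, where $\nabla H_N(m_{\alpha^{(k)}})$ exists almost surely by Lemma~\ref{lem: energy and first deriv covar}, validating the definition of $u_{\alpha^{(k)},k+1,j}$. Second, upon termination with $\alpha = \alpha^{(k)}$, the nesting property~(\ref{eq: nested}) gives $u_{\alpha,s,j} = u_{\alpha^{(s-1)},s,j}$ for all $s \le k+1$, so the construction's rounding identities translate into $\lfloor\langle\sigma,u_{\alpha,s,j}\rangle\rfloor_\varepsilon = \alpha_{s,j}$ for $s \le k$, yielding $\sigma \in D_\alpha$ by~(\ref{eq: Dalpha def}); the stopping criterion then gives $\sigma \in \mathcal{E}_\alpha$ by~(\ref{eq: Ealpha def}).

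Third, the iteration must halt within $k \le 5\eta^{-2}$ steps. If it has not halted at stage $k-1 \ge 1$, then $|\langle\sigma,u_{\alpha^{(k-1)},k,j^*}\rangle| > \eta$ for some $j^* \in \{1,2\}$, and from~(\ref{eq: rounding distance}) combined with $\varepsilon \le \eta/2$,
\[
|\alpha_{k,j^*}| \ge |\langle\sigma,u_{\alpha^{(k-1)},k,j^*}\rangle| - \varepsilon > \eta - \eta/2 = \eta/2,
\]
hence $|\alpha_k|^2 > \eta^2/4$. Summing over $r = 2,\ldots,k$ produces $|\alpha^{(k)}|^2 > (k-1)\eta^2/4$, which combined with $|\alpha^{(k)}|^2 < 1$ forces $k - 1 < 4\eta^{-2}$, so $k \le 5\eta^{-2}$ for $\eta \le 1$, placing the output $\alpha$ in $\mathcal{A}_{\varepsilon,\eta} = \bigcup_{1 \le k \le 5\eta^{-2}} \mathcal{A}_k$ as defined in~(\ref{eq: Aeps  ddef}).

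The main obstacle is less the termination bound itself than the bookkeeping around~(\ref{eq: nested}): one has to be careful that the basis vectors $u_{\alpha^{(k)},\cdot,\cdot}$ computed during the iteration coincide with the \emph{a posteriori} vectors $u_{\alpha,\cdot,\cdot}$ that appear in the definitions of $D_\alpha$ and $\mathcal{E}_\alpha$ for the terminal $\alpha$. The nesting property is what makes this compatibility automatic, and once it is in hand the norm-counting termination is short.
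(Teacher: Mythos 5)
Your proof is correct and follows essentially the same route as the paper's: greedily build $\alpha$ by rounding inner products with the recursively defined basis, invoke the nesting property~(\ref{eq: nested}) to identify the on-the-fly basis vectors with those appearing in the definitions of $D_\alpha$ and $\mathcal{E}_\alpha$, and bound the number of iterations by counting norm contributions $\ge \eta^2/4$ per step against $\|\sigma\|^2=1$. The only cosmetic deviation is your stopping rule (in terms of the unrounded $|\langle\sigma,u_{\alpha,k+1,j}\rangle|\le\eta$ rather than the rounded $|\alpha_{k+1,j}|\le\eta/2$); since $\varepsilon\le\eta/2$ these are interchangeable, and your case split between ``some $\langle\sigma,u\rangle\neq 0$'' and ``all zero'' is if anything slightly more careful than the paper's strict inequality chain.
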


\begin{proof}
Let $\sigma\in S_{N-1}$. Define $\alpha_{1}\in I_{\varepsilon}^{K}$
by
\begin{equation}
\alpha_{1,1}=\lfloor\left\langle \sigma,u_{1}\right\rangle \rfloor_{\varepsilon},\ldots,\alpha_{1,K}=\lfloor\left\langle \sigma,u_{K}\right\rangle \rfloor_{\varepsilon},\label{eq: first step}
\end{equation}
and define the random sequence $\alpha_{l},2\le l\le\frac{N-K}{2}+1,$
recursively via
\begin{equation}
\alpha_{l,1}=\lfloor\left\langle \sigma,u_{\left(\alpha_{1},\ldots,\alpha_{l-1}\right),l,1}\right\rangle \rfloor_{\varepsilon}\text{ and }\alpha_{l,2}=\lfloor\left\langle \sigma,u_{\left(\alpha_{1},\ldots,\alpha_{l-1}\right),l,2}\right\rangle \rfloor_{\varepsilon},\label{eq: alpha from sigma}
\end{equation}
where the $u_{\left(\alpha_{1},\ldots,\alpha_{l-1}\right),l}$ are
constructed in Definition \ref{def: def of m}. Let 
\begin{equation}
k\text{ be the smallest positive integer such that }\left|\alpha_{k+1,1}\right|,\left|\alpha_{k+1,2}\right|\le\frac{\eta}{2},\label{eq: k smallest int}
\end{equation}
and let $\alpha=\left(\alpha_{1},\ldots,\alpha_{k}\right)$. By the
``nesting'' property (\ref{eq: nested}) of the basis $u_{\alpha,l,j}$
we have
\[
u_{\left(\alpha_{1},\ldots,\alpha_{l-1}\right),l,j}=u_{\alpha,l,j}\text{\,for all }2\le l\le k+1,1\le j\le2,
\]
which together with (\ref{eq: first us}) means that (\ref{eq: first step})-(\ref{eq: alpha from sigma})
imply
\begin{equation}
\lfloor\left\langle \sigma,u_{\alpha,l,j}\right\rangle \rfloor_{\varepsilon}=\alpha_{l,j}\text{\,for all }1\le l\le k+1,1\le j\le A_{l}.\label{eq: sigma rounded down}
\end{equation}
Thus $\alpha_{l,j}\in I_{\varepsilon}$ for all $l,j$ and since the
$u_{\alpha,l,j}$ are orthonormal (recall (\ref{eq: us orthonormal}))
\begin{equation}
\sum_{l=1}^{k}\sum_{j=1}^{A_{l}}\alpha_{l,j}^{2}\overset{\eqref{eq: sigma rounded down},\eqref{eq: rounding distance}}{<}\sum_{l=1}^{k}\sum_{j=1}^{A_{l}}\left\langle \sigma,u_{\alpha,l,j}\right\rangle ^{2}\le\|\sigma\|^{2}=1.\label{eq: alphas 2 norm}
\end{equation}
This implies that $\alpha\in\mathcal{A}_{k}$ (see its definition
(\ref{eq: def of Ak})), and then (\ref{eq: sigma rounded down})
implies that $\sigma\in D_{\alpha}$ (see its definition (\ref{eq: Dalpha def})).

Furthermore, since $\left|\alpha_{k+1,1}\right|,\left|\alpha_{k+1,2}\right|\le\eta$
the equality (\ref{eq: sigma rounded down}) implies that $\left|\left\langle \sigma,u_{\alpha,k+1}\right\rangle \right|\le\eta/2+\varepsilon\le\eta$
so that $\sigma\in\mathcal{E}_{\alpha}$ by (\ref{eq: Ealpha def}).

We now show that also $\alpha\in\mathcal{A}_{\varepsilon,\eta}$,
thus completing the proof of (\ref{eq: cover inclusion}). The construction
of $\alpha$ (recall (\ref{eq: k smallest int})) implies that
\begin{equation}
\left|\alpha_{l,1}\right|>\frac{\eta}{2}\text{ or }\left|\alpha_{l,2}\right|>\frac{\eta}{2}\text{\, for }l=2,\ldots,k.\label{eq: al not close to zero}
\end{equation}
Thus we have 
\[
\left(k-1\right)\frac{\eta^{2}}{4}\overset{\eqref{eq: al not close to zero}}{\le}\sum_{l=2}^{k}\sum_{j=1}^{A_{l}}\alpha_{l,1}^{2}\overset{\eqref{eq: alphas 2 norm}}{\le}1,
\]
showing that
\begin{equation}
k\le1+4\eta^{-2}\le5\eta^{-2},\label{eq: length of al}
\end{equation}
and thus that $\alpha\in\mathcal{A}_{\varepsilon,\eta}$ (see (\ref{eq: Aeps  ddef})).
\end{proof}
\begin{rem}
The proposition could be somewhat strengthened, since one can prove
that it also holds with
\[
\left\{ \alpha\in\mathcal{A}_{\varepsilon,\eta}:\alpha_{l}\notin\left[-\frac{\eta}{2},\frac{\eta}{2}\right]^{2}\text{ for }2\le l\le5\eta^{-2}\right\} ,
\]
in place of $\mathcal{A}_{\varepsilon,\eta}$. Indeed, for instance
for any $\alpha\in\mathcal{A}_{1}$ the set $\mathcal{E}_{\alpha}$
is itself contained in $\cup_{\alpha'\in\mathcal{A}_{k}}\mathcal{E}_{\alpha'}$
for any $k\ge2$, so the spin configurations in $\mathcal{E}_{\alpha}$
are contained in the RHS of (\ref{eq: cover inclusion}) many times
over. In the sketch of Section \ref{subsec: sketch sphere} this double-counting
is avoided (see (\ref{eq: I eps eta grid}), (\ref{eq: sketch incl})),
but in the proof we do not bother with this as it would slightly complicate
the definition of $\mathcal{A}_{\varepsilon,\eta}$ and does not otherwise
simplify the proof or strengthen the result. \qed
\end{rem}

\section{\label{sec: proof of gen UB}Proof of general TAP upper bound}

In this section we complete the proof of the general TAP upper bound
Theorem \ref{thm: main_thm_general} using the construction from the
previous two sections.

Recall that $E$ is the measure in the statement of Theorem \ref{thm: main_thm_general}.
Proposition \ref{prop: cover} implies
\begin{equation}
E\left[\exp\left(\beta H_{N}^{f}\left(\sigma\right)\right)\right]\le\sum_{\alpha\in\mathcal{A}_{\varepsilon,\eta}}E\left[1_{\mathcal{E}_{\alpha}}\exp\left(\beta H_{N}^{f}\left(\sigma\right)\right)\right],\label{eq: foreshadow}
\end{equation}
which we will use in the proof of Theorem \ref{thm: main_thm_general}.
Define for all $\alpha\in\mathcal{A}$ such that $E\left[\mathcal{E}_{\alpha}\right]>0$
the (random) measures
\begin{equation}
E^{m_{\alpha}}\left[A\right]=\frac{E\left[A\cap\mathcal{E}_{\alpha}\right]}{E\left[\mathcal{E}_{\alpha}\right]},\text{ }A\subset S_{N-1}\text{ measurable},\label{eq: E equator def}
\end{equation}
so that (\ref{eq: foreshadow}) can be written as
\begin{equation}
E\left[\exp\left(\beta H_{N}^{f}\left(\sigma\right)\right)\right]\le\sum_{\alpha\in\mathcal{A}_{\varepsilon,\eta}:E\left[\mathcal{E}_{\alpha}\right]>0}E\left[\mathcal{E}_{\alpha}\right]E^{m_{\alpha}}\left[\exp\left(\beta H_{N}^{f}\left(\sigma\right)\right)\right].\label{eq: foreshadow 2}
\end{equation}
In what follows we will give bounds for $E[\mathcal{E}_{\alpha}]$
and $E^{m_{\alpha}}[\exp(\beta H_{N}^{f}(\sigma))]$. We first consider
$E[\mathcal{E}_{\alpha}]$. To bound it we use the next lemma, which
shows that if $\sigma\in\mathcal{E}_{\alpha}$ then the increment
$\hat{\sigma}=\sigma-m_{\alpha}$ is almost orthogonal to $\overline{U}{}_{\alpha}$,
i.e. almost lies in $\overline{V}{}_{\alpha}$. It will also be used
to show that the effective external field on $\mathcal{E}_{\alpha}$
almost vanishes in Proposition \ref{prop: effective external field B-1}.
\begin{lem}
\label{lem: Dalpha is thick slice}For any $0<\eta\le K^{-1/2}$ and
$0<\varepsilon\le\eta^{2}$ it holds almost surely for all $\alpha\in\mathcal{A}_{\varepsilon,\eta}$
that 
\begin{equation}
\sigma\in\mathcal{E}_{\alpha}\implies\|P^{\overline{U}{}_{\alpha}}\hat{\sigma}\|=\|P^{\overline{U}{}_{\alpha}}\sigma-m_{\alpha}\|=\|P^{\overline{V}{}_{\alpha}}\hat{\sigma}-\hat{\sigma}\|=\|P^{\overline{V}{}_{\alpha}}\sigma-\hat{\sigma}\|\le4\eta,\label{eq: slice distance}
\end{equation}
where $\hat{\sigma}=\sigma-m_{\alpha}$.
\end{lem}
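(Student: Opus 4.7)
The plan is to expand $\hat\sigma = \sigma - m_\alpha$ in the orthonormal family $u_{\alpha,l,j}$ spanning $\overline U_\alpha$ and bound the coordinates directly from the definitions of $D_\alpha$ and $\mathcal E_\alpha$. Write $k$ for the level with $\alpha\in\mathcal A_k$. Since $u_{\alpha,l,j}$, $1\le l\le k+1$, $1\le j\le A_l$, is an $\langle\cdot,\cdot\rangle$-orthonormal basis of $\overline U_\alpha$ by \eqref{eq: us orthonormal}--\eqref{eq: Ualpha k in terms of u}, Parseval gives
\begin{equation*}
\|P^{\overline U_\alpha}\hat\sigma\|^{2} \;=\; \sum_{l=1}^{k+1}\sum_{j=1}^{A_l}\langle \hat\sigma,u_{\alpha,l,j}\rangle^{2}.
\end{equation*}

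For $1\le l\le k$, using \eqref{eq: m alpha expansion} and orthonormality one has $\langle m_\alpha,u_{\alpha,l,j}\rangle = \alpha_{l,j}$, so $\langle\hat\sigma,u_{\alpha,l,j}\rangle = \langle\sigma,u_{\alpha,l,j}\rangle - \alpha_{l,j}$; the condition $\sigma\in D_\alpha$ in \eqref{eq: Dalpha def} together with \eqref{eq: rounding distance} then yields $|\langle\hat\sigma,u_{\alpha,l,j}\rangle|\le\varepsilon$. For $l=k+1$, the vectors $u_{\alpha,k+1,j}$ lie in $V_\alpha = U_\alpha^\perp$ while $m_\alpha\in U_\alpha$ by \eqref{eq: m in U alpha k}, so $\langle m_\alpha,u_{\alpha,k+1,j}\rangle=0$ and $\langle\hat\sigma,u_{\alpha,k+1,j}\rangle=\langle\sigma,u_{\alpha,k+1,j}\rangle$; the defining inequality of $\mathcal E_\alpha$ in \eqref{eq: Ealpha def} then gives $|\langle\hat\sigma,u_{\alpha,k+1,j}\rangle|\le\eta$.

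Summing contributions and using $A_1=K$, $A_l=2$ for $l\ge2$, $k\le 5\eta^{-2}$ (since $\alpha\in\mathcal A_{\varepsilon,\eta}$, see \eqref{eq: Aeps  ddef}), $\varepsilon\le\eta^{2}$ and $\eta\le K^{-1/2}$ yields
\begin{equation*}
\|P^{\overline U_\alpha}\hat\sigma\|^{2}\;\le\;\bigl(K+2(k-1)\bigr)\varepsilon^{2}+2\eta^{2}\;\le\;K\eta^{4}+10\eta^{2}+2\eta^{2}\;\le\;13\eta^{2},
\end{equation*}
hence $\|P^{\overline U_\alpha}\hat\sigma\|\le\sqrt{13}\,\eta<4\eta$. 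The three remaining equalities are formal consequences of the orthogonal decomposition \eqref{eq: orthog decomp overline}: since $m_\alpha\in U_\alpha\subset\overline U_\alpha$ we have $P^{\overline U_\alpha}\sigma-m_\alpha=P^{\overline U_\alpha}\hat\sigma$; orthogonality gives $\hat\sigma-P^{\overline V_\alpha}\hat\sigma=P^{\overline U_\alpha}\hat\sigma$; and $P^{\overline V_\alpha}\sigma-\hat\sigma = m_\alpha-P^{\overline U_\alpha}\sigma$, which has the same norm.

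There is no real obstacle here: the only thing to watch is that the factor $k$, which can be as large as $5\eta^{-2}$, multiplies only $\varepsilon^{2}$ and not $\eta^{2}$. This is exactly why the equator in \eqref{eq: Ealpha def} is imposed in a single (two-dimensional) ``level'' $l=k+1$, whereas the rounding tolerance $\varepsilon$ is imposed in all $k$ previous levels; the choice $\varepsilon\le\eta^{2}$ then absorbs the factor $k\le5\eta^{-2}$, and $\eta\le K^{-1/2}$ absorbs the first-level contribution $K\varepsilon^{2}$.
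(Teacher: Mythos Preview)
Your proof is correct and follows essentially the same route as the paper's own argument: expand $\|P^{\overline U_\alpha}\hat\sigma\|^2$ in the orthonormal basis $(u_{\alpha,l,j})$, bound the first $k$ levels by $\varepsilon$ each via $D_\alpha$ and the last level by $\eta$ via $\mathcal E_\alpha$, and then use $k\le 5\eta^{-2}$, $K\le\eta^{-2}$, $\varepsilon\le\eta^{2}$ to reach the bound $16\eta^{2}$ (your sharper $13\eta^{2}$ is fine). Your handling of the four equalities via $m_\alpha\in\overline U_\alpha$ and the orthogonal decomposition $\overline U_\alpha\oplus\overline V_\alpha$ is likewise the same as in the paper.
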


\begin{proof}
The equality of the first four expressions is elementary and holds
for any $\sigma\in\mathbb{R}^{N}$ since $P^{\overline{V}{}_{\alpha}}\hat{\sigma}+P^{\overline{U}{}_{\alpha}}\hat{\sigma}=\hat{\sigma}$
for all $\hat{\sigma}$ by (\ref{eq: orthog decomp overline}), and
$m_{\alpha}\in U_{\alpha}\subset\overline{U}{}_{\alpha}$ (see (\ref{eq: m in U alpha k}),
(\ref{eq: Ualpha k in terms of u})).

To show the inequality fix $1\le k\le5\eta^{-2}$ and $\alpha\in\mathcal{A}_{k}$.
By (\ref{eq: us orthonormal}), (\ref{eq: Ualpha k in terms of u})
we have for any $\sigma\in\mathbb{R}^{N}$
\[
\|P^{\overline{U}{}_{\alpha}}\hat{\sigma}\|^{2}=\sum_{l=1}^{k+1}\sum_{j=1}^{A_{l}}\left\langle \hat{\sigma},u_{\alpha,l,j}\right\rangle ^{2}=\sum_{l=1}^{k+1}\sum_{j=1}^{A_{l}}\left\langle \sigma-m_{\alpha},u_{\alpha,l,j}\right\rangle ^{2}.
\]
By (\ref{eq: us orthonormal}), (\ref{eq: m alpha expansion}) we
have $\left\langle m_{\alpha},u_{\alpha,l,j}\right\rangle =\alpha_{l,j}$
for $1\le l\le k,1\le j\le A_{l}$, and $\left\langle m_{\alpha},u_{\alpha,k+1,j}\right\rangle =0$
for $j=1,2$. Thus the right-hand side can be written as 
\[
\sum_{l=1}^{k}\sum_{j=1}^{A_{l}}\left(\left\langle \sigma,u_{\alpha,l,j}\right\rangle -\alpha_{l,j}\right)^{2}+\left\{ \left\langle \sigma,u_{\alpha,k+1,1}\right\rangle ^{2}+\left\langle \sigma,u_{\alpha,k+1,2}\right\rangle ^{2}\right\} .
\]
By the definition (\ref{eq: Dalpha def}) of $D_{\alpha}$ and (\ref{eq: rounding distance}),
and the definition (\ref{eq: Ealpha def}) of $\mathcal{E}_{\alpha}$,
this is at most
\[
\left(K+2\left(k-1\right)\right)\varepsilon^{2}+2\eta^{2}.
\]
Using $k\le5\eta^{-2},K\le\eta^{-2}$ and $\varepsilon\le\eta^{2}$
this can be bounded by $16\eta^{2}$, giving the claim.
\end{proof}
For $\alpha\in\mathcal{A}_{k}$ we refer to the set 
\[
\left(m_{\alpha}+\overline{V}_{\alpha}\right)\cap B_{N}=m_{\alpha}+B_{N}\left(\sqrt{1-\|m_{\alpha}\|^{2}}\right)\cap\overline{V}_{\alpha},
\]
as a ``slice'' of the ball centered at $m_{\alpha}$ of $\text{dim}\,\overline{V}_{\alpha}=N-K-2k$.
Since $\sigma\in\mathcal{E}_{\alpha}$ satisfies $\|P^{\overline{U}_{\alpha}}\sigma-m_{\alpha}\|\le4\eta$
by the previous lemma we have

\[
\mathcal{E}_{\alpha}\subset\bigcup_{m\in\overline{U}{}_{\alpha},\|m-m_{\alpha}\|\le4\eta}\left(m+\overline{V}_{\alpha}\right)\cap B_{N}\text{ for }\alpha\in\mathcal{A}_{\varepsilon,\eta},
\]
where the right-hand side can be thought of as a ``thick slice''
centered at $m_{\alpha}$.

The next result uses the previous lemma and the fact that $\overline{U}_{\alpha}$
contains the normal of an (almost) minimal entropy hyperplane around
$m_{\alpha}$ to bound $E[\mathcal{E}_{\alpha}]$ in terms of $I_{E,\delta}\left(m_{\alpha}\right)$.
\begin{prop}
\label{prop: entropy of Bs}For $I_{E,\delta}$ as in (\ref{eq: IN})
and any $\delta>0,0<\eta\le\min\left(K^{-1/2},\delta/4\right)$ and
$0<\varepsilon\le\eta^{2}$ it holds that
\[
E\left[\mathcal{E}_{\alpha}\right]\le e^{I_{E,\delta}\left(m_{\alpha}\right)+\delta}\text{\,for all }\alpha\in\mathcal{A}_{\varepsilon,\eta}\text{ almost surely.}
\]
\textup{}
\end{prop}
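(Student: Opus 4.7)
The plan is to combine three ingredients: the definition (\ref{eq: lambda def})--(\ref{eq: lambda delta m and I E delta}) of $\lambda_{m_\alpha}^\delta$ as an almost-minimizer of the hyperplane entropy, the fact (\ref{eq: heff lambda m in U}) that $\lambda_{m_\alpha}^\delta\in\overline{U}_\alpha$, and the geometric thickness bound of Lemma \ref{lem: Dalpha is thick slice}. The key observation is that $\mathcal{E}_\alpha$ is sandwiched inside a half-space whose normal is the almost-minimal-entropy direction at $m_\alpha$.

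First I would fix $\alpha\in\mathcal{A}_{\varepsilon,\eta}$ and any $\sigma\in\mathcal{E}_\alpha$, and set $\hat\sigma=\sigma-m_\alpha$. Since $\lambda_{m_\alpha}^\delta\in\overline{U}_\alpha$ by (\ref{eq: heff lambda m in U}), the orthogonal decomposition $\hat\sigma = P^{\overline{U}_\alpha}\hat\sigma + P^{\overline{V}_\alpha}\hat\sigma$ yields
\begin{equation*}
\langle\lambda_{m_\alpha}^\delta,\hat\sigma\rangle = \langle\lambda_{m_\alpha}^\delta, P^{\overline{U}_\alpha}\hat\sigma\rangle.
\end{equation*}
By Cauchy--Schwarz for $\langle\cdot,\cdot\rangle$, and using $\|\lambda_{m_\alpha}^\delta\|=1$ together with Lemma \ref{lem: Dalpha is thick slice}, this is bounded in absolute value by $4\eta$. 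Since by assumption $\eta\le\delta/4$, this gives $\langle\lambda_{m_\alpha}^\delta,\sigma-m_\alpha\rangle\ge -\delta$, so
\begin{equation*}
\mathcal{E}_\alpha\subset\bigl\{\sigma\in S_{N-1}:\langle\lambda_{m_\alpha}^\delta,\sigma-m_\alpha\rangle\ge -\delta\bigr\}.
\end{equation*}

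Then I would take $E$-measure of both sides and apply (\ref{eq: lambda delta m and I E delta}) to conclude
\begin{equation*}
E[\mathcal{E}_\alpha]\le E\bigl[\langle\lambda_{m_\alpha}^\delta,\sigma-m_\alpha\rangle\ge -\delta\bigr]\le e^{I_{E,\delta}(m_\alpha)+\delta}.
\end{equation*}
The statement holds for every $\alpha\in\mathcal{A}_{\varepsilon,\eta}$ simultaneously since $\mathcal{A}_{\varepsilon,\eta}$ is a finite (in fact deterministic) index set and the above argument is a pointwise implication on the underlying probability space.

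There is essentially no obstacle: the hypotheses $\eta\le K^{-1/2}$ and $\varepsilon\le\eta^2$ are exactly what Lemma \ref{lem: Dalpha is thick slice} needs, and $\eta\le\delta/4$ is tailored so that the thickness $4\eta$ converts into the slack $\delta$ used in the definition of $I_{E,\delta}$. The only point requiring minor care is to verify that $\lambda_{m_\alpha}^\delta$ actually lies in $\overline{U}_\alpha$ (not merely in some larger space), which is precisely what the iterative construction in Definition \ref{def: def of m} secures by including $\lambda_{m_{\alpha,l-1}}^\delta$ in the span used to build $u_{\alpha,l}$ at every step, and in particular at the final step producing $u_{\alpha,k+1}$.
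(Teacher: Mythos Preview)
Your proof is correct and follows essentially the same route as the paper's: both use that $\lambda_{m_\alpha}^\delta\in\overline{U}_\alpha$ so that $\langle\lambda_{m_\alpha}^\delta,\hat\sigma\rangle$ reduces to the inner product with $P^{\overline{U}_\alpha}\hat\sigma$ (equivalently $-(P^{\overline{V}_\alpha}\hat\sigma-\hat\sigma)$), invoke Lemma~\ref{lem: Dalpha is thick slice} to bound this by $4\eta\le\delta$, and then apply (\ref{eq: lambda delta m and I E delta}). The only cosmetic difference is that the paper writes the projection as $P^{\overline{V}_\alpha}\hat\sigma-\hat\sigma$ rather than $P^{\overline{U}_\alpha}\hat\sigma$, which are of course equal up to sign.
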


\begin{proof}
Fix $\alpha\in\mathcal{A}_{\varepsilon,\eta}$. Since $\lambda_{m_{\alpha}}^{\delta}\in\overline{U}_{\alpha}=\overline{V}_{\alpha}^{\bot}$
(recall (\ref{eq: heff lambda m in U})) and $\|\lambda_{m_{\alpha}}^{\delta}\|=1$
(recall Definition \ref{def: lambda}) we have for any $\sigma$
\[
\left|\left\langle \lambda_{m_{\alpha}}^{\delta},\sigma-m_{\alpha}\right\rangle \right|=\left|\left\langle \lambda_{m_{\alpha}}^{\delta},\hat{\sigma}\right\rangle \right|=\left|\left\langle \lambda_{m_{\alpha}}^{\delta},P^{\overline{V}_{\alpha}}\hat{\sigma}-\hat{\sigma}\right\rangle \right|\le\|P^{\overline{V}_{\alpha}}\hat{\sigma}-\hat{\sigma}\|,
\]
and by Lemma \ref{lem: Dalpha is thick slice}
\[
\|P^{\overline{V}_{\alpha}}\hat{\sigma}-\hat{\sigma}\|\le4\eta\le\delta,
\]
for any $\sigma\in\mathcal{E}_{\alpha}$ if $\eta\le\delta/4$, so
that
\[
\left\langle \lambda_{m_{\alpha}}^{\delta},\sigma-m_{\alpha}\right\rangle \ge-\delta\text{\,for all }\sigma\in\mathcal{E}_{\alpha}.
\]
Therefore 
\[
E\left[\mathcal{E}_{\alpha}\right]\le E\left[\left\langle \lambda_{m_{\alpha}}^{\delta},\sigma-m_{\alpha}\right\rangle \ge-\delta\right]\overset{\eqref{eq: lambda delta m and I E delta}}{\le}\exp\left(I_{E,\delta}\left(m_{\alpha}\right)+\delta\right).
\]
\end{proof}
The previous proposition will allow us to deal with the term $E[\mathcal{E}_{\alpha}]$
in (\ref{eq: foreshadow 2}). We now turn to the other term on the
RHS of (\ref{eq: foreshadow 2}), namely the normalized partition
function $E^{m_{\alpha}}[\exp(\beta H_{N}^{f}(\sigma))]$. We first
show that the effective external field on $\mathcal{E}_{\alpha}$
can be neglected. This is one of the most important properties of
the cover $\left(\mathcal{E}_{\alpha}\right)_{\alpha\in\mathcal{A}_{\varepsilon,\eta}}$,
together with the fact that it consists of a small number of sets.
\begin{prop}[Effective external field on $\mathcal{E}_{\alpha}$ vanishes for $\alpha\in\mathcal{A}_{\varepsilon,\eta}$]
\label{prop: effective external field B-1}$ $\\
For $0<\eta\le K^{-1/2}$ and $0<\varepsilon\le\eta^{2}$ it holds
a.s. for all $\alpha\in\mathcal{A}_{\varepsilon,\eta}$ that
\begin{equation}
\sigma\in\mathcal{E}_{\alpha}\implies\left|\left\langle \hat{\sigma},h_{{\rm eff}}\left(m_{\alpha}\right)\right\rangle \right|\le4\eta\|h_{{\rm eff}}\left(m_{\alpha}\right)\|\text{ where }\hat{\sigma}=\sigma-m_{\alpha}.\label{eq: eff ext field vanish}
\end{equation}
\end{prop}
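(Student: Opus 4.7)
The plan is to exploit the orthogonality built into the construction of $\overline{U}_{\alpha}$ and $\overline{V}_{\alpha}$, combined with the quantitative control on $P^{\overline{U}_{\alpha}}\hat{\sigma}$ already proved in Lemma \ref{lem: Dalpha is thick slice}.

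The first step is to note that by (\ref{eq: heff lambda m in U}) the effective external field satisfies
\[
h_{{\rm eff}}(m_{\alpha}) = \nabla H_N(m_{\alpha}) \in \overline{U}_{\alpha} = \overline{V}_{\alpha}^{\bot}.
\]
In particular, $h_{{\rm eff}}(m_{\alpha})$ is orthogonal to every vector in $\overline{V}_{\alpha}$. Using the orthogonal decomposition $\hat{\sigma} = P^{\overline{U}_{\alpha}}\hat{\sigma} + P^{\overline{V}_{\alpha}}\hat{\sigma}$ from (\ref{eq: orthog decomp overline}), this orthogonality yields
\[
\langle \hat{\sigma}, h_{{\rm eff}}(m_{\alpha}) \rangle = \langle P^{\overline{U}_{\alpha}}\hat{\sigma}, h_{{\rm eff}}(m_{\alpha}) \rangle.
\]

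The second step is to apply Cauchy--Schwarz (for the normalized inner product $\langle\cdot,\cdot\rangle$) to obtain
\[
\bigl|\langle P^{\overline{U}_{\alpha}}\hat{\sigma}, h_{{\rm eff}}(m_{\alpha}) \rangle\bigr| \le \|P^{\overline{U}_{\alpha}}\hat{\sigma}\| \cdot \|h_{{\rm eff}}(m_{\alpha})\|.
\]

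Finally, assuming $\sigma \in \mathcal{E}_{\alpha}$ with $\alpha \in \mathcal{A}_{\varepsilon,\eta}$ and the hypotheses $0 < \eta \le K^{-1/2}$, $0 < \varepsilon \le \eta^2$, Lemma \ref{lem: Dalpha is thick slice} directly gives $\|P^{\overline{U}_{\alpha}}\hat{\sigma}\| \le 4\eta$. Chaining these three steps yields exactly (\ref{eq: eff ext field vanish}). There is no real obstacle here: the whole point of defining $\overline{U}_{\alpha}$ to contain $\nabla H_N(m_{\alpha})$ (see (\ref{eq: space 2})--(\ref{eq: UV def})) and the ``equator'' condition $|\langle \sigma, u_{\alpha,k+1,j}\rangle| \le \eta$ in (\ref{eq: Ealpha def}) was precisely to make this two-line estimate available.
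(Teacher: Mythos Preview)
Your proof is correct and essentially identical to the paper's own argument: both use that $h_{\rm eff}(m_\alpha)\in\overline{U}_\alpha$ together with the orthogonal decomposition (\ref{eq: orthog decomp overline}) to reduce to $\langle P^{\overline{U}_\alpha}\hat{\sigma},h_{\rm eff}(m_\alpha)\rangle$, then apply Cauchy--Schwarz and Lemma~\ref{lem: Dalpha is thick slice}. The only cosmetic difference is that the paper writes $|\langle\hat{\sigma},h_{\rm eff}\rangle|\le|\langle P^{\overline{V}_\alpha}\hat{\sigma},h_{\rm eff}\rangle|+\|P^{\overline{U}_\alpha}\hat{\sigma}\|\|h_{\rm eff}\|$ and then observes the first term vanishes, whereas you go directly to the equality $\langle\hat{\sigma},h_{\rm eff}\rangle=\langle P^{\overline{U}_\alpha}\hat{\sigma},h_{\rm eff}\rangle$; these are the same step.
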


\begin{proof}
Since $\overline{U}{}_{\alpha}\oplus\overline{V}{}_{\alpha}$ is
an orthogonal decomposition of $\mathbb{R}^{N}$ (see (\ref{eq: orthog decomp overline}))
\[
\left|\left\langle \hat{\sigma},h_{{\rm eff}}\left(m_{\alpha}\right)\right\rangle \right|\le\left|\left\langle P^{\overline{V}{}_{\alpha}}\hat{\sigma},h_{{\rm eff}}\left(m_{\alpha}\right)\right\rangle \right|+\|P^{\overline{U}{}_{\alpha}}\hat{\sigma}\|\|h_{{\rm eff}}\left(m_{\alpha}\right)\|.
\]
The first term on the RHS vanishes since $h_{{\rm eff}}\left(m_{\alpha}\right)\in\overline{U}{}_{\alpha}$
(recall (\ref{eq: heff lambda m in U})), and by Lemma \ref{lem: Dalpha is thick slice}
we have $\|P^{\overline{U}{}_{\alpha}}\hat{\sigma}\|\le4\eta$ so
(\ref{eq: eff ext field vanish}) follows.
\end{proof}
In the remainder of the proof we will assume that $\xi'''\left(1\right)<\infty$
and work on the event
\begin{equation}
\mathcal{L}_{N}=\left\{ \sup_{m\in B_{N}^{\circ}}\|\nabla H_{N}\left(m\right)\|\le c_{\xi},\frac{\left|H_{N}\left(m\right)-H_{N}\left(\tilde{m}\right)\right|}{N}\le c_{\xi}\|m-m'\|\forall m,m'\in B_{N}^{\circ}\right\} ,\label{eq: lipschitz event}
\end{equation}
for $c_{\xi}=c\sqrt{\xi'''\left(1\right)}$, which by Lemma \ref{lem: grad estimate}
and $\xi'\left(1\right)\le\xi''\left(1\right)\le\xi'''\left(1\right)$
satisfies
\begin{equation}
\mathbb{P}\left(\mathcal{L}_{N}\right)\le e^{-N}\text{\,for all }N\ge1,\label{eq: the event prob}
\end{equation}
for a large enough universal $c$. We now use the decomposition (\ref{eq: decomp with ext field})
to show that the normalized partition function $E^{m_{\alpha}}[\exp(\beta H_{N}^{f}(\sigma))]$
can be bounded in terms of the partition function of the recentered
Hamiltonian $H_{N}^{m_{\alpha}}$ on $\mathcal{E}_{\alpha}$.
\begin{prop}[Decomposing Hamiltonian on thick slice]
\label{prop: using decomposition}Let $\beta\ge0,L>0$. If $\sqrt{\xi'''\left(1\right)}\le L$
and if $f_{N}:B_{N}\to\mathbb{R}$ satisfies $f_{N}\left(\sigma\right)=f_{N}\left(P^{U}\sigma\right)$
for all $\sigma$ and is Lipschitz with respect to $\|\cdot\|$ with
Lipschitz constant at most $LN$, it holds for $0<\eta\le K^{-1/2}$
and $0<\varepsilon\le\eta^{2}$ that
\begin{equation}
E^{m_{\alpha}}\left[\exp\left(\beta H_{N}^{f}\left(\sigma\right)\right)\right]\le e^{\beta H_{N}^{f}\left(m_{\alpha}\right)}E^{m_{\alpha}}\left[\exp\left(\beta H_{N}^{m_{\alpha}}\left(\hat{\sigma}\right)\right)\right]e^{c\eta\beta LN}\text{ for all }\alpha\in\mathcal{A}_{\varepsilon,\eta},\label{eq: remove eff ext field}
\end{equation}
on the event (\ref{eq: lipschitz event}).
\end{prop}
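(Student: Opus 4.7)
The plan is to apply the decomposition (\ref{eq: decomp with ext field}) pointwise on $\mathcal{E}_\alpha$, then bound the two ``error'' terms uniformly by $O(\eta L N)$ using Proposition \ref{prop: effective external field B-1} (for the effective external field) and the Lipschitz assumption on $f_N$ (for the spike-term increment), and finally exponentiate and integrate against $E^{m_\alpha}$.

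Fix $\alpha\in\mathcal{A}_{\varepsilon,\eta}$ and $\sigma\in\mathcal{E}_\alpha$, and write $\hat\sigma=\sigma-m_\alpha$. By (\ref{eq: decomp with ext field}),
\[
\beta H_N^f(\sigma)=\beta H_N^f(m_\alpha)+\beta\, h_{\mathrm{eff}}(m_\alpha)\cdot\hat\sigma+\beta H_N^{m_\alpha}(\hat\sigma)+\beta\bigl(f_N(\sigma)-f_N(m_\alpha)\bigr).
\]
For the effective-field term, Proposition \ref{prop: effective external field B-1} gives $|\langle\hat\sigma,h_{\mathrm{eff}}(m_\alpha)\rangle|\le 4\eta\,\|h_{\mathrm{eff}}(m_\alpha)\|$, so converting to the standard inner product yields $|h_{\mathrm{eff}}(m_\alpha)\cdot\hat\sigma|=N|\langle\hat\sigma,h_{\mathrm{eff}}(m_\alpha)\rangle|\le 4\eta N\|h_{\mathrm{eff}}(m_\alpha)\|$. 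Since $m_\alpha\in B_N^\circ$ by (\ref{eq: m alpha expansion}) and $h_{\mathrm{eff}}(m_\alpha)=\nabla H_N(m_\alpha)$, on the event $\mathcal{L}_N$ from (\ref{eq: lipschitz event}) we have $\|h_{\mathrm{eff}}(m_\alpha)\|\le c_\xi\le cL$, and hence $\beta|h_{\mathrm{eff}}(m_\alpha)\cdot\hat\sigma|\le c\eta\beta L N$.

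For the spike term, the identity $f_N(\sigma)=f_N(P^U\sigma)$ together with the Lipschitz assumption applied to $P^U\sigma,P^Um_\alpha\in B_N$ gives
\[
|f_N(\sigma)-f_N(m_\alpha)|=|f_N(P^U\sigma)-f_N(P^Um_\alpha)|\le LN\,\|P^U\hat\sigma\|.
\]
Now $U=U_{\alpha,1}\subset\overline{U}_\alpha$ by (\ref{U alpha 1 def}), (\ref{eq: U alpha l def}) and (\ref{eq: Ualpha k in terms of u}), so $\|P^U\hat\sigma\|\le\|P^{\overline{U}_\alpha}\hat\sigma\|\le 4\eta$ by Lemma \ref{lem: Dalpha is thick slice}. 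Thus $\beta|f_N(\sigma)-f_N(m_\alpha)|\le 4\eta\beta L N$ as well.

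Combining these two bounds, on $\mathcal{L}_N$ and uniformly in $\sigma\in\mathcal{E}_\alpha$,
\[
\beta H_N^f(\sigma)\le\beta H_N^f(m_\alpha)+\beta H_N^{m_\alpha}(\hat\sigma)+c\eta\beta L N.
\]
Exponentiating and integrating against $E^{m_\alpha}$, which by (\ref{eq: E equator def}) is supported on $\mathcal{E}_\alpha$, yields (\ref{eq: remove eff ext field}). I do not anticipate a real obstacle here: the crucial geometric fact (that $\hat\sigma$ lies $4\eta$-close to $\overline{V}_\alpha$, which is orthogonal to both $h_{\mathrm{eff}}(m_\alpha)$ and $U$) has already been established in Lemma \ref{lem: Dalpha is thick slice} and Proposition \ref{prop: effective external field B-1}. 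The only bookkeeping care required is to distinguish $\langle\cdot,\cdot\rangle$ from the standard dot product when converting the bound on $|\langle\hat\sigma,h_{\mathrm{eff}}(m_\alpha)\rangle|$, and to use the Lipschitz assumption in the $\|\cdot\|$-norm (not the Euclidean one), so that the Lipschitz constant $LN$ combines with $\|P^U\hat\sigma\|\le 4\eta$ to produce the claimed $\eta LN$ contribution rather than a $\sqrt{N}$ factor.
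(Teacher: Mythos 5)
Your proof is correct and follows essentially the same approach as the paper: apply the decomposition (\ref{eq: decomp with ext field}) pointwise on $\mathcal{E}_\alpha$, bound the effective-field term via Proposition \ref{prop: effective external field B-1} together with the gradient bound on the event $\mathcal{L}_N$, bound the $f_N$ increment via the Lipschitz assumption and a control on $\|P^U\hat\sigma\|$, then exponentiate and integrate. The only (valid) minor variation is that you obtain $\|P^U\hat\sigma\|\le\|P^{\overline{U}_\alpha}\hat\sigma\|\le4\eta$ by combining $U\subset\overline{U}_\alpha$ with Lemma \ref{lem: Dalpha is thick slice}, whereas the paper instead computes directly from the definition of $D_\alpha$ that $\|P^U\hat\sigma\|\le\varepsilon\sqrt{K}\le\eta$; both give the same order of error.
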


\begin{proof}
Fix $\alpha\in\mathcal{A}_{\varepsilon,\eta}$. We use the decomposition
(\ref{eq: decomp with ext field}) with $\sigma=m_{\alpha}+\hat{\sigma}$
, i.e.
\begin{equation}
H_{N}^{f}\left(m_{\alpha}+\hat{\sigma}\right)=H_{N}^{f}\left(m_{\alpha}\right)+h_{{\rm eff}}\left(m_{\alpha}\right)\cdot\hat{\sigma}+\left(f_{N}\left(m_{\alpha}+\hat{\sigma}\right)-f_{N}\left(m_{\alpha}\right)\right)+H_{N}^{m_{\alpha}}\left(\hat{\sigma}\right).\label{eq: decomp}
\end{equation}
Since $f_{N}\left(x\right)$ depends only on $P^{U}x$, the Lipschitz
assumption on $f_{N}$ implies that\\
${\left|f_{N}\left(m_{\alpha}+\hat{\sigma}\right)-f_{N}\left(m_{\alpha}\right)\right|\le LN\|P^{U}\hat{\sigma}\|}$
for all $\hat{\sigma}$. We have for $\sigma\in\mathcal{E_{\alpha}\subset D_{\alpha}}$
\[
\|P^{U_{1}}\hat{\sigma}\|^{2}=\sum_{i=1}^{K}\left(\left\langle \sigma,u_{i}\right\rangle -\left\langle m_{\alpha,1},u_{i}\right\rangle \right)^{2}\overset{\eqref{eq: first us},\eqref{eq: m alpha expansion}}{=}\sum_{i=1}^{K}\left(\left\langle \sigma,u_{i}\right\rangle -\alpha_{1,i}\right)^{2}\overset{\eqref{eq: rounding distance}.\eqref{eq: Dalpha def}}{\le}\varepsilon^{2}K
\]
so since $\varepsilon\le\eta^{2}$ and $\eta\le K^{-1/2}$ 
\begin{equation}
\left|f_{N}\left(m_{\alpha}+\hat{\sigma}\right)-f_{N}\left(m_{\alpha}\right)\right|\le NL\eta\text{ for all }\alpha\in\mathcal{A}\text{ and }\hat{\sigma}\in\mathcal{E}_{\alpha}-m_{\alpha}.\label{eq: f bound}
\end{equation}
Also $\|h_{{\rm eff}}\left(m_{\alpha}\right)\|=\|\nabla H_{N}\left(m_{\alpha}\right)\|\le c_{\xi}\le cL$
on the event (\ref{eq: lipschitz event}) so that by Proposition \ref{prop: effective external field B-1}
we obtain that on that event
\begin{equation}
\left|h_{{\rm eff}}\left(m_{\alpha}\right)\cdot\hat{\sigma}\right|=N\left|\left\langle h_{{\rm eff}}\left(m_{\alpha}\right),\hat{\sigma}\right\rangle \right|\le NcL\eta\text{ for all }\alpha\in\mathcal{A}_{\varepsilon,\eta}\text{ and }\hat{\sigma}\in\mathcal{E}_{\alpha}-m_{\alpha}.\label{eq: eff ext field}
\end{equation}
The claim (\ref{eq: remove eff ext field}) follows from (\ref{eq: decomp}),
(\ref{eq: f bound}) and (\ref{eq: eff ext field}).
\end{proof}
We now aim to bound the normalized partition function $E^{m_{\alpha}}[\exp(\beta H_{N}^{m_{\alpha}}(\hat{\sigma}))]$
of the recentered Hamiltonian from the RHS of (\ref{eq: remove eff ext field}).
To do so we will first move from the ``thick slices'' $\mathcal{E}_{\alpha}$
to the ``thin slices'' $m_{\alpha}+\Sigma_{\alpha}$, where
\begin{equation}
\Sigma_{\alpha}=B_{N}\left(\sqrt{1-\|m_{\alpha}\|^{2}}\right)\cap\overline{V}{}_{\alpha}\text{ for }\alpha\in\mathcal{A}.\label{eq: sigma alpha def}
\end{equation}
Note that for $\sigma\in m_{\alpha}+\Sigma_{\alpha}$ we have $\sigma=m_{\alpha}+\hat{\sigma}$
with $\hat{\sigma}\in\overline{V}{}_{\alpha}$, the latter being only
approximately true for a $\sigma\in\mathcal{E}_{\alpha}$.

For $\sigma\in\mathcal{E}_{\alpha}$ we define
\begin{equation}
\hat{\tau}_{\alpha}\left(\sigma\right)=\sqrt{1-\|m_{\alpha}\|^{2}}\frac{P^{\overline{V}{}_{\alpha}}\sigma}{\|P^{\overline{V}{}_{\alpha}}\sigma\|}\in\Sigma_{\alpha},\label{eq: tau alpha def}
\end{equation}
(with the convention $\frac{P^{\overline{V}{}_{\alpha}}\sigma}{\|P^{\overline{V}{}_{\alpha}}\sigma\|}=0$
if $P^{\overline{V}{}_{\alpha}}\sigma=0$) to be a point in $\Sigma_{\alpha}$
that approximates $\hat{\sigma}$ well, as shown by the next lemma.
\begin{lem}[Projecting to ``thin slice'']
\label{lem: projecting back to sphere}For $0<\eta\le K^{-1/2}$
and $0<\varepsilon\le\eta^{2}$ we have almost surely for all $\alpha\in\mathcal{A}_{\varepsilon,\eta}$
and $\sigma\in\mathcal{E}_{\alpha}$ that

a) 
\begin{equation}
\left|\|P^{\overline{V}{}_{\alpha}}\sigma\|-\sqrt{1-\|m_{\alpha}\|^{2}}\right|\le8\eta^{1/4}.\label{eq: claim}
\end{equation}

b) 
\[
\|\sigma-\left(m_{\alpha}+\hat{\tau}_{\alpha}\left(\sigma\right)\right)\|=\|\hat{\sigma}-\hat{\tau}_{\alpha}\left(\sigma\right)\|\le12\eta^{1/4}.
\]
\end{lem}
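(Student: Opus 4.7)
\emph{Proof plan.} The plan is to reduce both claims to three ingredients: (i) the bound $\|P^{\overline{U}_{\alpha}}\sigma - m_{\alpha}\| \le 4\eta$ established in Lemma \ref{lem: Dalpha is thick slice}, (ii) the orthogonal decomposition $\mathbb{R}^{N} = \overline{U}_{\alpha} \oplus \overline{V}_{\alpha}$ from \eqref{eq: orthog decomp overline}, and (iii) the normalization $\|\sigma\|=1$ available since $\mathcal{E}_{\alpha} \subset S_{N-1}$.

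For part (a), I would use Pythagoras to get $\|P^{\overline{V}_{\alpha}}\sigma\|^{2} = 1 - \|P^{\overline{U}_{\alpha}}\sigma\|^{2}$, so that
\[
\bigl|\|P^{\overline{V}_{\alpha}}\sigma\|^{2} - (1-\|m_{\alpha}\|^{2})\bigr| = \bigl|\|P^{\overline{U}_{\alpha}}\sigma\|^{2} - \|m_{\alpha}\|^{2}\bigr|.
\]
Writing $P^{\overline{U}_{\alpha}}\sigma = m_{\alpha} + (P^{\overline{U}_{\alpha}}\sigma - m_{\alpha})$, expanding the square, and applying Cauchy--Schwarz together with (i) and $\|m_{\alpha}\| < 1$ bounds the right-hand side by $8\eta + 16\eta^{2}$. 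The passage from squared to linear differences uses the elementary inequality $(a-b)^{2} \le (a+b)|a-b| = |a^{2}-b^{2}|$, valid for $a,b \ge 0$, which gives
\[
\bigl|\|P^{\overline{V}_{\alpha}}\sigma\| - \sqrt{1-\|m_{\alpha}\|^{2}}\bigr| \le \sqrt{8\eta + 16\eta^{2}},
\]
and a direct comparison (using $\eta \le 1$, which is ensured by $\eta \le K^{-1/2}$) absorbs this into the claimed $8\eta^{1/4}$.

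For part (b), using $m_{\alpha} \in \overline{U}_{\alpha}$ (by \eqref{eq: m in U alpha k}--\eqref{eq: Ualpha k in terms of u}) and $\hat{\tau}_{\alpha}(\sigma) \in \overline{V}_{\alpha}$ (by \eqref{eq: tau alpha def}), I would decompose
\[
\hat{\sigma} - \hat{\tau}_{\alpha}(\sigma) = \bigl(P^{\overline{U}_{\alpha}}\sigma - m_{\alpha}\bigr) + \bigl(P^{\overline{V}_{\alpha}}\sigma - \hat{\tau}_{\alpha}(\sigma)\bigr),
\]
with the two summands lying in orthogonal subspaces. The first has $\|\cdot\|$-norm at most $4\eta$ by (i). For the second, outside the convention-handled degenerate case $P^{\overline{V}_{\alpha}}\sigma = 0$, both vectors are positive scalar multiples of the same unit vector in $\overline{V}_{\alpha}$, with $\hat{\tau}_{\alpha}(\sigma)$ exactly the rescaling of $P^{\overline{V}_{\alpha}}\sigma$ to norm $\sqrt{1-\|m_{\alpha}\|^{2}}$; its norm therefore equals $\bigl|\|P^{\overline{V}_{\alpha}}\sigma\| - \sqrt{1-\|m_{\alpha}\|^{2}}\bigr|$, bounded by $8\eta^{1/4}$ via part (a) (the degenerate case is immediately handled since then $\sqrt{1-\|m_{\alpha}\|^{2}}$ is itself bounded by part (a)). Pythagoras (or just the triangle inequality) yields $\|\hat{\sigma}-\hat{\tau}_{\alpha}(\sigma)\| \le 4\eta + 8\eta^{1/4} \le 12\eta^{1/4}$ for $\eta \in (0,1]$.

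The only mildly non-routine step is the square-to-linear passage in (a): the naive estimate $|a-b| \le |a^{2}-b^{2}|/(a+b)$ would fail precisely when both $\|P^{\overline{V}_{\alpha}}\sigma\|$ and $\sqrt{1-\|m_{\alpha}\|^{2}}$ are small, but using $|a-b| \le \sqrt{|a^{2}-b^{2}|}$ sidesteps that issue and accounts for the appearance of the square-root exponent (which further simplifies to the stated $\eta^{1/4}$ once the numerical constants are absorbed over the range $\eta \le 1$). Everything else is a routine assembly from Lemma \ref{lem: Dalpha is thick slice}, the orthogonal decomposition, and the definition of $\hat{\tau}_{\alpha}$.
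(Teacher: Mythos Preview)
Your proof is correct, and part (b) is essentially the paper's argument (triangle inequality splitting into the $\overline{U}_\alpha$ and $\overline{V}_\alpha$ pieces, then Lemma~\ref{lem: Dalpha is thick slice} and part (a)).

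For part (a) you take a genuinely cleaner route than the paper. The paper, after obtaining $\bigl|\|P^{\overline{U}_\alpha}\sigma\|^2-\|m_\alpha\|^2\bigr|\le 8\eta$, splits into the cases $\|m_\alpha\|^2\ge 1-\eta^{1/2}$ and $\|m_\alpha\|^2<1-\eta^{1/2}$: in the first it bounds the sum $\|P^{\overline{V}_\alpha}\sigma\|+\sqrt{1-\|m_\alpha\|^2}$ directly (yielding $4\eta^{1/4}$), and in the second it factors out $\sqrt{1-\|m_\alpha\|^2}$ and uses $|\sqrt{1+x}-1|\le|x|$ (yielding $8\eta^{1/2}$). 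Your single-line inequality $|a-b|\le\sqrt{|a^2-b^2|}$ replaces this case analysis entirely and in fact delivers the uniform bound $\sqrt{8\eta+16\eta^2}=O(\eta^{1/2})$, which you then relax to the stated $8\eta^{1/4}$. So your argument is both shorter and sharper in the exponent; the paper's case split is unnecessary here, and the $\eta^{1/4}$ in the lemma is an artifact of its proof rather than the true rate.
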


\begin{proof}
a) Since $\overline{U}{}_{\alpha}\oplus\overline{V}{}_{\alpha}$ is
an orthogonal decomposition of $\mathbb{R}^{N}$ (see (\ref{eq: orthog decomp overline}))
\begin{equation}
\|P^{\overline{V}{}_{\alpha}}\sigma\|=\sqrt{1-\|P^{\overline{U}{}_{\alpha}}\sigma\|^{2}}.\label{eq: PV PU}
\end{equation}
Since $\|m_{\alpha}\|,\|\sigma\|\le1$ it holds that
\[
\left|\|m_{\alpha}\|^{2}-\|P^{\overline{U}{}_{\alpha}}\sigma\|^{2}\right|=\left|\|m_{\alpha}\|+\|P^{\overline{U}{}_{\alpha}}\sigma\|\right|\left|\|m_{\alpha}\|-\|P^{\overline{U}{}_{\alpha}}\sigma\|\right|\le2\left|\|m_{\alpha}\|-\|P^{\overline{U}{}_{\alpha}}\sigma\|\right|,
\]
so that by Lemma \ref{lem: Dalpha is thick slice}
\begin{equation}
\left|\|m_{\alpha}\|^{2}-\|P^{\overline{U}{}_{\alpha}}\sigma\|^{2}\right|\le8\eta,\label{eq: bound squares}
\end{equation}
for $\sigma\in\mathcal{E}_{\alpha}$. Now if $\|m_{\alpha}\|^{2}\ge1-\eta^{1/2}$,
then
\begin{equation}
\begin{array}{ccl}
\left|\|P^{\overline{V}{}_{\alpha}}\sigma\|-\sqrt{1-\left|m_{\alpha}\right|^{2}}\right| & \le & \|P^{\overline{V}{}_{\alpha}}\sigma\|+\sqrt{1-\left|m_{\alpha}\right|^{2}}\\
 & \overset{\eqref{eq: PV PU},\eqref{eq: bound squares}}{\le} & \sqrt{1-\|m_{\alpha}\|^{2}+8\eta}+\sqrt{1-\|m_{\alpha}\|^{2}}\\
 & \le & \sqrt{\eta^{1/2}+8\eta}+\sqrt{\eta^{1/2}}\\
 & \le & 4\eta^{1/4}.
\end{array}\label{eq: bound 1}
\end{equation}

If on the other hand $\|m_{\alpha}\|^{2}\le1-\eta^{1/2}$ then 
\begin{equation}
\begin{array}{ccl}
\|P^{\overline{V}{}_{\alpha}}\sigma\|-\sqrt{1-\|m_{\alpha}\|^{2}} & \overset{\eqref{eq: PV PU}}{=} & \sqrt{1-\|P^{\overline{U}{}_{\alpha}}\sigma\|^{2}}-\sqrt{1-\|m_{\alpha}\|^{2}}\\
 & = & \sqrt{1-\|m_{\alpha}\|^{2}}\left(\sqrt{1+\frac{\|m_{\alpha}\|^{2}-\|P^{\overline{U}{}_{\alpha}}\sigma\|^{2}}{1-\|m_{\alpha}\|^{2}}}-1\right).
\end{array}\label{eq: taking out}
\end{equation}
and
\[
\left|\frac{\|m_{\alpha}\|^{2}-\|P^{\overline{U}{}_{\alpha}}\sigma\|^{2}}{1-\|m_{\alpha}\|^{2}}\right|\overset{\eqref{eq: bound squares}}{\le}\frac{8\eta}{\eta^{1/2}}=8\eta^{1/2}.
\]
Since $\left|\sqrt{1+x}-1\right|\le\left|x\right|$ for all $x\ge-1$
we thus have that 
\[
\left|\sqrt{1+\frac{\|m\|^{2}-\|P^{\overline{U}{}_{\alpha}}\sigma\|^{2}}{1-\|m\|^{2}}}-1\right|\le8\eta^{1/2},
\]
and so from (\ref{eq: taking out})
\[
\begin{array}{ccl}
\left|\|P^{\overline{V}{}_{\alpha}}\sigma\|-\sqrt{1-\|m_{\alpha}\|^{2}}\right| & \le & \sqrt{1-\|m_{\alpha}\|^{2}}8\eta^{1/2}\le8\eta^{1/2}.\end{array}
\]
Combing this with (\ref{eq: bound 1}) gives (\ref{eq: claim}).

b) It holds that
\[
\begin{array}{ccl}
\|\sigma-\left(m_{\alpha}+\hat{\tau}_{\alpha}\left(\sigma\right)\right)\| & \overset{\eqref{eq: tau alpha def}}{=} & \|\hat{\sigma}-\sqrt{1-\|m_{\alpha}\|^{2}}\frac{P^{\overline{V}{}_{\alpha}}\sigma}{\|P^{\overline{V}{}_{\alpha}}\sigma\|}\|\\
 & \le & \|\hat{\sigma}-P^{\overline{V}{}_{\alpha}}\sigma\|+\|P^{\overline{V}{}_{\alpha}}\sigma-\sqrt{1-\left|m_{\alpha}\right|^{2}}\frac{P^{\overline{V}{}_{\alpha}}\sigma}{\|P^{\overline{V}{}_{\alpha}}\sigma\|}\|\\
 & \overset{\text{Lemma }\ref{lem: Dalpha is thick slice}}{\le} & 4\eta+\left|\|P^{\overline{V}{}_{\alpha}}\sigma\|-\sqrt{1-\|m_{\alpha}\|^{2}}\right|\\
 & \overset{\eqref{eq: claim}}{\le} & 4\eta+8\eta^{1/4}\le12\eta^{1/4}.
\end{array}
\]
\end{proof}
Let now 
\begin{equation}
E^{\Sigma_{\alpha}}\text{ denote the law of }\hat{\tau}_{\alpha}\left(\sigma\right)\text{\,under }E^{m_{\alpha}}.\label{eq: thin slice law}
\end{equation}
The next lemma bounds the partition function on the ``thick slice''
$\mathcal{E}_{\alpha}$ by that on the ``thin slice'' $m_{\alpha}+\Sigma_{\alpha}$,
using the previous lemma to bound the error made when approximating
$\hat{\sigma}$ by $\hat{\tau}_{\alpha}\left(\hat{\sigma}\right)$.
\begin{lem}[From ``thick slice'' to ``thin slice'']
\label{lem: to thin slice}Let $\beta\ge0,L>0$. If $\sqrt{\xi'''\left(1\right)}\le L$
, $0<\eta\le K^{-1/2}$ and $0<\varepsilon\le\eta^{2}$ then on the
event (\ref{eq: lipschitz event})
\begin{equation}
E^{m_{\alpha}}\left[\exp\left(\beta H_{N}^{m_{\alpha}}\left(\hat{\sigma}\right)\right)\right]\le E^{\Sigma_{\alpha}}\left[\exp\left(\beta H_{N}^{m_{\alpha}}\left(\sigma\right)\right)\right]e^{c\eta^{1/4}\beta LN}\text{ for all }\alpha\in\mathcal{A}_{\varepsilon,\eta}.\label{eq: to thin slice}
\end{equation}
\end{lem}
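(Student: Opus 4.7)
The plan is to exploit the Lipschitz property of $H_N$ and $\nabla H_N$ on the event $\mathcal{L}_N$ to transfer the exponential integral from $\mathcal{E}_\alpha$ to $m_\alpha+\Sigma_\alpha$ via the projection map $\hat\tau_\alpha$ introduced in \eqref{eq: tau alpha def}. The entire argument rests on two ingredients already established: the pointwise approximation $\|\hat\sigma-\hat\tau_\alpha(\sigma)\|\le 12\eta^{1/4}$ from Lemma \ref{lem: projecting back to sphere}(b), and a Lipschitz bound for the recentered field $H_N^{m_\alpha}$.

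First, I would derive a Lipschitz estimate for $H_N^{m_\alpha}$. From the definition \eqref{eq: recentered hamil},
\[
H_N^{m_\alpha}(\hat\sigma_1)-H_N^{m_\alpha}(\hat\sigma_2)=\bigl(H_N(m_\alpha+\hat\sigma_1)-H_N(m_\alpha+\hat\sigma_2)\bigr)-\nabla H_N(m_\alpha)\cdot(\hat\sigma_1-\hat\sigma_2),
\]
and on $\mathcal{L}_N$ the first term is bounded in absolute value by $c_\xi N\|\hat\sigma_1-\hat\sigma_2\|$, while the second, by Cauchy-Schwarz and $a\cdot b=N\langle a,b\rangle$, is bounded by $N\|\nabla H_N(m_\alpha)\|\,\|\hat\sigma_1-\hat\sigma_2\|\le c_\xi N\|\hat\sigma_1-\hat\sigma_2\|$. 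Using $c_\xi\le cL$ (which holds since $\sqrt{\xi'''(1)}\le L$), this yields
\[
|H_N^{m_\alpha}(\hat\sigma_1)-H_N^{m_\alpha}(\hat\sigma_2)|\le cLN\|\hat\sigma_1-\hat\sigma_2\|
\]
for any $\hat\sigma_1,\hat\sigma_2$ with $m_\alpha+\hat\sigma_i\in B_N^\circ$; by continuity this extends to the closure $B_N$, which is enough since both $\sigma=m_\alpha+\hat\sigma$ and $m_\alpha+\hat\tau_\alpha(\sigma)$ lie in $S_{N-1}\subset B_N$ (the latter because $\hat\tau_\alpha(\sigma)\perp m_\alpha$ and $\|\hat\tau_\alpha(\sigma)\|=\sqrt{1-\|m_\alpha\|^2}$ by construction).

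Applying this Lipschitz estimate with $\hat\sigma_1=\hat\sigma$ and $\hat\sigma_2=\hat\tau_\alpha(\sigma)$, and invoking Lemma \ref{lem: projecting back to sphere}(b), gives
\[
|H_N^{m_\alpha}(\hat\sigma)-H_N^{m_\alpha}(\hat\tau_\alpha(\sigma))|\le cLN\cdot 12\eta^{1/4}\le cLN\eta^{1/4}\qquad\text{for every }\sigma\in\mathcal{E}_\alpha.
\]
Exponentiating and integrating against $E^{m_\alpha}$ yields
\[
E^{m_\alpha}\!\bigl[\exp(\beta H_N^{m_\alpha}(\hat\sigma))\bigr]\le e^{c\eta^{1/4}\beta LN}\,E^{m_\alpha}\!\bigl[\exp(\beta H_N^{m_\alpha}(\hat\tau_\alpha(\sigma)))\bigr].
\]
Finally, since $E^{\Sigma_\alpha}$ is by \eqref{eq: thin slice law} the pushforward of $E^{m_\alpha}$ under $\sigma\mapsto\hat\tau_\alpha(\sigma)$, the last expectation equals $E^{\Sigma_\alpha}[\exp(\beta H_N^{m_\alpha}(\sigma))]$, establishing \eqref{eq: to thin slice}.

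There is no real obstacle here: the content is the quantitative approximation already provided by Lemma \ref{lem: projecting back to sphere}(b), and the only subtlety is the bookkeeping in the Lipschitz estimate, where one must carefully separate the two contributions (the $H_N$ difference and the linear-in-$\hat\sigma$ recentering term) since the latter does not come for free from $\mathcal{L}_N$'s Lipschitz clause and must instead be controlled via the gradient bound of $\mathcal{L}_N$.
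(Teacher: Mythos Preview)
Your proof is correct and follows essentially the same approach as the paper: derive a Lipschitz bound for $H_N^{m_\alpha}$ by splitting into the $H_N$-difference and the gradient-recentering term, apply Lemma~\ref{lem: projecting back to sphere}(b), and identify the resulting integral as the pushforward $E^{\Sigma_\alpha}$. Your explicit remark about extending the Lipschitz estimate to the closed ball by continuity is a nice touch that the paper glosses over.
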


\begin{proof}
From (\ref{eq: recentered hamil}) it holds for any $\hat{\sigma},\hat{\tau}\in B_{\alpha}:=B_{N}\left(\sqrt{1-\|m_{\alpha}\|^{2}}\right)$
and any $\alpha$ that
\[
H_{N}^{m_{\alpha}}\left(\hat{\sigma}\right)-H_{N}^{m_{\alpha}}\left(\hat{\tau}\right)=H_{N}\left(m_{\alpha}+\hat{\sigma}\right)-H_{N}\left(m_{\alpha}+\hat{\tau}\right)-\nabla H_{N}\left(m_{\alpha}\right)\cdot\left(\hat{\sigma}-\hat{\tau}\right).
\]
Thus on the event (\ref{eq: lipschitz event}) we have
\begin{equation}
\left|H_{N}^{m_{\alpha}}\left(\hat{\sigma}\right)-H_{N}^{m_{\alpha}}\left(\hat{\tau}\right)\right|\le cLN\|\hat{\sigma}-\hat{\tau}\|\text{ for all }\hat{\sigma},\hat{\tau}\in B_{\alpha}.\label{eq: lipschitz recent hamilt}
\end{equation}
Recall from (\ref{eq: PV PU}) that $\hat{\tau}_{\alpha}\left(\sigma\right)\in B_{\alpha}$
for all $\sigma\in\mathcal{E}_{\alpha}$. Thus (\ref{eq: lipschitz recent hamilt})
together with Lemma \ref{lem: projecting back to sphere} b) imply
that on the event (\ref{eq: lipschitz event}) 
\[
E^{m_{\alpha}}\left[\exp\left(\beta H_{N}^{m_{\alpha}}\left(\hat{\sigma}\right)\right)\right]\le E^{m_{\alpha}}\left[\exp\left(\beta H_{N}^{m_{\alpha}}\left(\hat{\tau}_{\alpha}\left(\sigma\right)\right)\right)\right]e^{c\eta^{1/4}\beta LN}.
\]
But by the definition (\ref{eq: thin slice law}) of $E^{\Sigma_{\alpha}}$
we have
\begin{equation}
E^{m_{\alpha}}\left[\exp\left(\beta H_{N}^{m_{\alpha}}\left(\hat{\tau}_{\alpha}\left(\sigma\right)\right)\right)\right]=E^{\Sigma_{\alpha}}\left[\exp\left(\beta H_{N}^{m_{\alpha}}\left(\sigma\right)\right)\right],\label{eq: pushforward}
\end{equation}
which gives the claim (\ref{eq: to thin slice}).
\end{proof}
We have thus reduced the proof of Theorem \ref{thm: main_thm_general}
to bounding the partition function $E^{\Sigma_{\alpha}}[\exp(\beta H_{N}^{m_{\alpha}}(\sigma))]$
on the ``thin slice'' $\Sigma_{\alpha}$. Next we do so uniformly
over all $m_{\alpha},\alpha\in\mathcal{A}_{\varepsilon,\eta}$. These
bounds give rise to the Onsager correction in the TAP free energy.
We obtain the bound by a simple Markov inequality for the partition
function over each slice $\Sigma_{\alpha}$, and since the number
of $m_{\alpha}$'s is small a union bound shows that the upper bound
holds for all $m_{\alpha}$ simultaneously with high probability.
Since the recentered Hamiltonian has no external field the Markov
inequality will give a tight upper for small enough $\beta$. Recall
the notation $q_{\alpha}=\|m_{\alpha}\|^{2}$ from (\ref{eq: q alpha def}).
\begin{prop}[Onsager correction]
\label{prop: Onsager}For any $\varepsilon,\eta>0$ it holds for
$N,\beta,E$ as in Theorem \ref{thm: main_thm_general} and any $\delta>0$
and
\begin{equation}
\mathcal{O}_{N,\delta,\varepsilon,\eta}=\left\{ \begin{array}{c}
E^{\Sigma_{\alpha}}\left[\exp\left(\beta H_{N}^{m_{\alpha}}\left(\hat{\sigma}\right)\right)\right]\le e^{N\frac{\beta^{2}}{2}{\rm On}\left(q_{\alpha}\right)+\delta N}\\
\text{ for all }\alpha\in\mathcal{A}_{\varepsilon,\eta}\text{ s.t. }E\left[\mathcal{E}_{\alpha}\right]>0
\end{array}\right\} ,\label{eq: the event}
\end{equation}
that
\[
\mathbb{P}\left(\mathcal{O}_{N,\delta,\varepsilon,\eta}\right)\le\left|\mathcal{A}_{\varepsilon,\eta}\right|e^{-\delta N}.
\]
\end{prop}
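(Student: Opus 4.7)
The approach is slice-by-slice Markov inequality followed by a union bound over $\alpha \in \mathcal{A}_{\varepsilon,\eta}$. I read the stated probability as a bound on the complement $\mathcal{O}_{N,\delta,\varepsilon,\eta}^c$ (i.e., the event that the Onsager estimate \emph{fails} for some $\alpha$), since $\mathcal{O}$ itself is the ``good'' event whose probability must tend to $1$ for the estimate to be useful downstream in the proof of Theorem \ref{thm: main_thm_general}; with this reading, what I plan to establish is
\begin{equation*}
\mathbb{P}(\mathcal{O}_{N,\delta,\varepsilon,\eta}^c) \leq |\mathcal{A}_{\varepsilon,\eta}|\,e^{-\delta N}.
\end{equation*}

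Fix $\alpha \in \mathcal{A}_{\varepsilon,\eta}$ with $E[\mathcal{E}_\alpha] > 0$. By Lemma \ref{lem: law of recentering alpha}, conditionally on $\mathcal{R}_\alpha$ the process $(H_N^{m_\alpha}(\hat{\sigma}))_{\hat{\sigma} \in \overline{V}_\alpha \cap B_N(\sqrt{1-q_\alpha})}$ is centered Gaussian with covariance function $\xi_{q_\alpha}$; moreover the random measure $E^{\Sigma_\alpha}$ and the indicator $\{E[\mathcal{E}_\alpha]>0\}$ are $\mathcal{R}_\alpha$-measurable because both $m_\alpha$ and $\overline{V}_\alpha$ are. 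Each $\hat{\sigma} \in \Sigma_\alpha$ satisfies $\|\hat{\sigma}\|^2 \leq 1-q_\alpha$, so
\begin{equation*}
{\rm Var}\!\left(H_N^{m_\alpha}(\hat{\sigma}) \,\big|\, \mathcal{R}_\alpha\right) = N\,\xi_{q_\alpha}(\|\hat{\sigma}\|^2) \leq N\,\xi_{q_\alpha}(1-q_\alpha) = N\,{\rm On}(q_\alpha),
\end{equation*}
using the monotonicity of $\xi_{q_\alpha}$ on $[0,1-q_\alpha]$ (the Taylor coefficients of $\xi$ are non-negative) and the identity \eqref{eq: Onsager and recent xi}.

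The Gaussian moment generating function together with Fubini then gives
\begin{equation*}
\mathbb{E}\!\left[E^{\Sigma_\alpha}\!\left[\exp(\beta H_N^{m_\alpha}(\hat{\sigma}))\right] \,\Big|\, \mathcal{R}_\alpha\right] \leq \exp\!\left(N\tfrac{\beta^2}{2}{\rm On}(q_\alpha)\right),
\end{equation*}
so the conditional Markov inequality yields, at the fixed $\alpha$, that the probability that $E^{\Sigma_\alpha}[\exp(\beta H_N^{m_\alpha})]$ exceeds $\exp(N\tfrac{\beta^2}{2}{\rm On}(q_\alpha)+\delta N)$ is at most $e^{-\delta N}$. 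Taking unconditional expectations preserves this bound, and a union bound over the at most $|\mathcal{A}_{\varepsilon,\eta}|$ values of $\alpha$ appearing in the definition of $\mathcal{O}$ completes the argument. No serious obstacle is anticipated; the main care required is the measurability book-keeping noted above, which is routine given the structure of Definition \ref{def: def of m} and \eqref{eq: R alpha def}.
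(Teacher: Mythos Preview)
Your proposal is correct and follows essentially the same approach as the paper: condition on $\mathcal{R}_\alpha$, apply Markov's inequality using the Gaussian moment generating function (via Lemma~\ref{lem: law of recentering alpha}), and take a union bound over $\alpha\in\mathcal{A}_{\varepsilon,\eta}$. One small remark: you bound the variance by monotonicity, but in fact $E^{\Sigma_\alpha}$ is supported on $\hat{\sigma}$ with $\|\hat{\sigma}\|^2 = 1-q_\alpha$ exactly (by the definition \eqref{eq: tau alpha def} of $\hat{\tau}_\alpha$), so the paper simply uses the equality ${\rm Var}(H_N^{m_\alpha}(\hat{\sigma})\mid\mathcal{R}_\alpha)=N\xi_{q_\alpha}(1-q_\alpha)$; your reading of the displayed probability as a bound on the complement is also correct.
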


\begin{proof}
Fix $\alpha\in\mathcal{A}$ and consider
\begin{equation}
\mathbb{P}\left(E^{\Sigma_{\alpha}}\left[\exp\left(\beta H_{N}^{m_{\alpha}}\left(\hat{\sigma}\right)\right)\right]\ge e^{\frac{\beta^{2}}{2}\xi_{q_{\alpha}}\left(1-q_{\alpha}\right)+\delta N}|\mathcal{R}_{\alpha}\right),\label{eq: onsager term one B}
\end{equation}
for $\mathcal{R}_{\alpha}$ from (\ref{eq: R alpha def}). To lighten
notation drop the $\alpha$ subscript and write $\Sigma=\Sigma_{\alpha}$,
$m=m_{\alpha}$, $q=q_{\alpha}=\|m_{\alpha}\|^{2}$, $\mathcal{R}=\mathcal{R}_{\alpha}$
and $\overline{V}=\overline{V}_{\alpha}$. Note that $m$ and $\overline{V}$,
and therefore $\Sigma$, are deterministic functions of $u_{\alpha,l,j},1\le l\le k+1,1\le j\le A_{l}$,
are are thus deterministic under the measure $\mathbb{P}\left(\cdot|\mathcal{R}\right)$
(see (\ref{eq: m alpha expansion}) for $m$ and (\ref{eq: Ualpha k in terms of u})
for $\overline{V}$, and (\ref{eq: sigma alpha def}) for $\Sigma=\Sigma_{\alpha}$).

By Markov's inequality
\[
\begin{array}{l}
\mathbb{P}\left(E^{\Sigma_{\alpha}}\left[\exp\left(\beta H_{N}^{m}\left(\hat{\sigma}\right)\right)\right]\ge e^{\frac{\beta^{2}}{2}\xi_{q}\left(1-q\right)+\delta N}|\mathcal{R}\right)\\
\le\mathbb{E}\left[E^{\Sigma_{\alpha}}\left[\exp\left(\beta H_{N}^{m}\left(\hat{\sigma}\right)\right)\right]|\mathcal{R}\right]e^{-N\frac{\beta^{2}}{2}\xi_{q}\left(1-q\right)-\delta N}\\
=E^{\Sigma_{\alpha}}\left[\mathbb{E}\left[\exp\left(\beta H_{N}^{m}\left(\hat{\sigma}\right)\right)|\mathcal{R}\right]\right]e^{-N\frac{\beta^{2}}{2}\xi_{q}\left(1-q\right)-\delta N}.
\end{array}
\]
Lemma \ref{lem: law of recentering alpha} implies that for fixed
$\hat{\sigma}\in\Sigma$ the $\mathbb{P}\left(\cdot|\mathcal{R}\right)$-law
of $H_{N}^{m}\left(\hat{\sigma}\right)$ is that of a centered normal
of variance $\xi_{q}\left(1-q\right)$, so that
\[
\mathbb{E}\left[\exp\left(\beta H_{N}^{m}\left(\hat{\sigma}\right)\right)|\mathcal{R}\right]=e^{N\frac{\beta^{2}}{2}\xi_{q}\left(1-q\right)}\text{ for any }\hat{\sigma}\in\Sigma.
\]
Thus in fact for all $\alpha\in\mathcal{A}$ it holds that
\[
\mathbb{P}\left(E^{\Sigma_{\alpha}}\left[\exp\left(\beta H_{N}^{m_{\alpha}}\left(\hat{\sigma}_{\alpha}\right)\right)\right]\ge e^{\frac{\beta^{2}}{2}\xi^{q_{\alpha}}\left(1-q\right)+\delta N}\right)\le e^{-\delta N}.
\]
A union bound over  $\alpha\in\mathcal{A}_{\varepsilon,\eta}$ and
(\ref{eq: Onsager and recent xi}) completes the proof.
\end{proof}
We are now ready to prove the general TAP upper bound. As already
mentioned we do so by splitting the partition function into integrals
over each set $\mathcal{E}_{\alpha}$ in the cover, normalizing these
integrals and recentering the Hamiltonian in them, and using the previous
results to bound the partition function on the slices by the Onsager
correction. 
\begin{proof}[Proof of Theorem \ref{thm: main_thm_general}]
Assume 
\begin{equation}
0<\eta\le\min\left(K^{-1/2},\frac{1}{2}\right)\text{ and }0<\varepsilon\le\eta^{2}\le\frac{\eta}{2}.\label{eq: eta epsilon assumption}
\end{equation}
We work on the event $\mathcal{L}_{N}\cap\mathcal{O}_{N,\delta/2,\varepsilon,\eta}$,
where $\mathcal{L}_{N}$ is the event from (\ref{eq: lipschitz event})
and $\mathcal{O}_{N,\delta/2,\varepsilon,\eta}$ is the event from
Proposition \ref{prop: Onsager}. By (\ref{eq: the event prob}) and
Proposition \ref{prop: Onsager} we have
\begin{equation}
\mathbb{P}\left(\mathcal{L}_{N}\cap\mathcal{O}_{N,\delta/2,\varepsilon,\eta}\right)\ge1-2\left|\mathcal{A}_{\varepsilon,\eta}\right|e^{-\frac{\delta}{2}N},\label{eq: good event prob to 1}
\end{equation}
(note that $\delta\in\left(0,1\right)$ and $\left|\mathcal{A}_{\varepsilon,\eta}\right|\ge1$).

By Proposition \ref{prop: cover} and the definition of $E^{m_{\alpha}}$
we have
\begin{equation}
\begin{array}{lcl}
E\left[\exp\left(\beta H_{N}^{f}\left(\sigma\right)\right)\right] & \le & \sum_{\alpha\in\mathcal{A}_{\varepsilon,\eta}}E\left[1_{\mathcal{E}_{\alpha}}\exp\left(\beta H_{N}^{f}\left(\sigma\right)\right)\right]\\
 & \overset{\eqref{eq: E equator def}}{=} & \sum_{\alpha\in\mathcal{A}_{\varepsilon,\eta},E\left[\mathcal{E}_{\alpha}\right]>0}E\left[\mathcal{E}_{\alpha}\right]E^{m_{\alpha}}\left[\exp\left(\beta H_{N}^{f}\left(\sigma\right)\right)\right].
\end{array}\label{eq: step 1}
\end{equation}
By Proposition \ref{prop: entropy of Bs} this is, provided $\eta\le\frac{\delta}{6}$
and $N\ge6$ (so that $\delta\le\frac{\delta}{6}N$) bounded by
\begin{equation}
\sum_{\alpha\in\mathcal{A}_{\varepsilon,\eta}}\exp\left(I_{E,\delta}\left(m_{\alpha}\right)+\frac{\delta}{6}N\right)E^{m_{\alpha}}\left[\exp\left(\beta H_{N}^{f}\left(\sigma\right)\right)\right].\label{eq: step 2}
\end{equation}
By Proposition \ref{prop: using decomposition} this is at most
\[
\sum_{\alpha\in\mathcal{A}_{\varepsilon,\eta}}\exp\left(\beta H_{N}^{f}\left(m_{\alpha}\right)+I_{E,\delta}\left(m_{\alpha}\right)+\left(\frac{\delta}{6}+c\eta\beta L\right)N\right)E^{m_{\alpha}}\left[\exp\left(\beta H_{N}^{m_{\alpha}}\left(\hat{\sigma}\right)\right)\right].
\]
By Lemma \ref{lem: to thin slice} this is at most
\begin{equation}
\sum_{\alpha\in\mathcal{A}_{\varepsilon,\eta}}\exp\left(\beta H_{N}^{f}\left(m_{\alpha}\right)+I_{E,\delta}\left(m_{\alpha}\right)+\left(\frac{\delta}{6}+c\eta^{1/4}\beta L\right)N\right)E^{\Sigma_{\alpha}}\left[\exp\left(H_{N}^{m_{\alpha}}\left(\hat{\sigma}\right)\right)\right],\label{eq: step 3}
\end{equation}
on the event $\mathcal{L}_{N}$. On the event $\mathcal{O}_{N,\delta/2,\varepsilon,\eta}$
from Proposition \ref{prop: Onsager} this is bounded by 
\begin{equation}
\begin{array}{cl}
 & {\displaystyle \sum_{\alpha\in\mathcal{A}_{\varepsilon,\eta}}}\exp\left(\beta H_{N}^{f}\left(m_{\alpha}\right)+I_{E,\delta}\left(m_{\alpha}\right)+\frac{\beta^{2}}{2}{\rm On}\left(\|m_{\alpha}\|^{2}\right)+\left(\frac{4\delta}{6}+c\eta^{1/4}\beta L\right)N\right)\\
\overset{\eqref{eq: HTAP measure on subset of sphere}}{=} & {\displaystyle \sum_{\alpha\in\mathcal{A}_{\varepsilon,\eta}}}\exp\left(H_{{\rm TAP}}^{I_{E,\delta}}\left(m_{\alpha}\right)+\left(\frac{4\delta}{6}+c\eta^{1/4}\beta L\right)N\right)\\
\le & \left|\mathcal{A}_{\varepsilon,\eta}\right|\exp\left(\sup_{m\in B_{N}^{\circ}}H_{{\rm TAP}}^{I_{E,\delta}}\left(m\right)+\left(\frac{4\delta}{6}+c\eta^{1/4}\beta L\right)N\right).
\end{array}\label{eq: next next next step}
\end{equation}
Set $\hat{\kappa}=c\max(K,(\beta L/\delta)^{8})\ge1$ for some universal
large $c$, and $\eta=(\hat{\kappa})^{-1/2}$ and $\varepsilon=\eta^{2}$.
We then get that (\ref{eq: eta epsilon assumption}) and all inequalities
above hold if $N\ge6$, and in addition and in addition $c\eta^{1/4}\beta L\le\bar{\kappa}^{-8}\beta L\le\frac{\delta}{6}$
so that the previous line is at most
\begin{equation}
\left|\mathcal{A}_{\varepsilon,\eta}\right|\exp\left(\sup_{m\in B_{N}^{\circ}}H_{{\rm TAP}}^{E,\delta}\left(m\right)+\frac{5\delta}{6}N\right).\label{eq: next times 4 step}
\end{equation}
Furthermore $K\le\eta^{-2}$ so by (\ref{eq: size of cover})
\begin{equation}
\left|\mathcal{A}_{\varepsilon,\eta}\right|\le\left(2\eta^{-2}\right)^{11\eta^{-2}}=\left(2\hat{\kappa}\right)^{11\hat{\kappa}}\le\frac{\delta}{6}N.\label{eq: size of cover kappa}
\end{equation}
for $N\ge\delta^{-1}66\hat{\kappa}\log\left(2\hat{\kappa}\right)\ge6$.
Then (\ref{eq: next times 4 step}) is at most 
\begin{equation}
\exp\left(\sup_{m\in B_{N}^{\circ}}H_{{\rm TAP}}^{E,\delta}\left(m\right)+\delta N\right).\label{eq: final step}
\end{equation}
with probability at least $1-2\left(2\hat{\kappa}\right)^{11\hat{\kappa}}e^{-\frac{\delta}{2}N}$
(recall (\ref{eq: good event prob to 1}) and (\ref{eq: size of cover kappa})),
provided $N\ge\delta^{-1}66\hat{\kappa}\log\left(2\hat{\kappa}\right)$.
If $N\le\delta^{-1}66\hat{\kappa}\log\left(2\hat{\kappa}\right)$
then $\left(2\hat{\kappa}\right)^{\frac{33}{2}\hat{\kappa}}e^{-\frac{\delta}{2}N}\ge1$
so the claim (\ref{eq: general UB}) follows for all $N\ge1$ with
$\bar{\kappa}=\frac{33}{2}\hat{\kappa}$.
\end{proof}

\section{\label{sec: Ising SK proof}Bounding the Ising entropy and proof
of Ising TAP upper bound}

In this section we derive the TAP upper bound for the Ising mixed
SK model from the general TAP upper bound, by explicitly bounding
the entropy function $I_{E,\delta}\left(m\right)$ of (\ref{eq: IN})
by the entropy function $I_{{\rm Ising}}\left(m\right)$ from (\ref{eq: ising entropy I}).

For this it is convenient to extend the definition of $J$ from (\ref{eq: binary entropy def J})
so that
\begin{equation}
J\left(m\right)=\begin{cases}
\frac{1+m}{2}\log\left(1+m\right)+\frac{1-m}{2}\log\left(1-m\right) & \text{ if }\left|m\right|\le1,\\
\log2 & \text{ if }\left|m\right|\ge1.
\end{cases}\label{eq: extended J def}
\end{equation}
We will need the following simple bound.
\begin{lem}
For all $m,\tilde{m}\in\mathbb{R}$
\begin{equation}
\left|J\left(m\right)-J\left(\tilde{m}\right)\right|\le\left|m-\tilde{m}\right|\log\frac{2e}{\left|m-\tilde{m}\right|\wedge1}.\label{eq: J diff}
\end{equation}
\end{lem}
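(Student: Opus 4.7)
The plan is to reduce to the case $0 \le m \le \tilde m \le 1$ and then bound $J(\tilde m) - J(m)$ by directly integrating $J'$. Two observations enable the reduction. First, by (\ref{eq: extended J def}) the function $J$ is even on $\mathbb{R}$, and $\bigl||m|-|\tilde m|\bigr| \le |m-\tilde m|$. The function $\varphi(t) = t\log(2e/(t\wedge 1))$ is increasing on $[0,\infty)$, since $\varphi'(t) = \log(2/t) > 0$ on $(0,1)$ and $\varphi'(t) = 1 + \log 2 > 0$ on $(1,\infty)$. Hence replacing $(m,\tilde m)$ by $(|m|,|\tilde m|)$ preserves the left-hand side of (\ref{eq: J diff}) and does not increase the right-hand side. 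Second, since $J \equiv \log 2$ on $[1,\infty)$, any argument $> 1$ may be replaced by $1$ without changing the left-hand side while only decreasing $|m-\tilde m|$. So it suffices to prove the inequality for $0 \le m \le \tilde m \le 1$; write $h = \tilde m - m \in [0,1]$.

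If $h = 1$ (so $m = 0$ and $\tilde m = 1$) then $|J(m)-J(\tilde m)| = \log 2 \le \log(2e) = h\log(2e/h)$. For $h < 1$ I would use that $J'(s) = \frac{1}{2}\log\frac{1+s}{1-s}$ on $(-1,1)$, so that
\[
J(\tilde m) - J(m) = \frac{1}{2}\int_m^{\tilde m} \log\frac{1+s}{1-s}\,ds \le \frac{1}{2}\int_m^{\tilde m}\log\frac{2}{1-s}\,ds,
\]
using $\log(1+s) \le \log 2$ for $s \in [0,1]$. The integrand $\log(2/(1-s))$ is positive and increasing on $[0,1)$, so among subintervals of $[0,1]$ of length $h$ the integral is maximized by $[1-h,1]$. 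Substituting $u = 1-s$ and computing $\int_0^h \log(2/u)\,du = h\log 2 + h - h\log h = h\log(2e/h)$ yields
\[
J(\tilde m) - J(m) \le \frac{h}{2}\log(2e/h) \le h\log(2e/h),
\]
which is the claim (\ref{eq: J diff}). There is no serious obstacle; the only mild care needed is in the two reductions, and this is ensured by the monotonicity of $\varphi$.
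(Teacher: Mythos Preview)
Your proof is correct and follows essentially the same approach as the paper: bound $|J(\tilde m)-J(m)|$ by the integral of $|J'|$ over an interval of length $h=|m-\tilde m|$, shift that interval to $[1-h,1]$ where $J'$ is largest, and compute $\int_0^h \log(2/u)\,du = h\log(2e/h)$. The paper organizes the argument slightly differently---it splits into the cases $|m-\tilde m|\ge 1$ (handled by the trivial bound $0\le J\le\log 2$) and $|m-\tilde m|\le 1$, and for the latter invokes ``the shape of $J'$'' to pass directly to $\int_{1-h}^{1}$ without first reducing to $[0,1]$. Your explicit reduction via evenness of $J$ and monotonicity of $\varphi$ makes that shift-the-interval step cleaner, since after the reduction $J'$ is nonnegative and increasing on the relevant range, whereas the paper's one-line ``shape'' argument tacitly requires checking the case where the interval straddles the origin.
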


\begin{proof}
Since $0\le J\left(m\right)\le\log2$ for all $m$ we trivially have
\begin{equation}
\left|J\left(m\right)-J\left(\tilde{m}\right)\right|\le\left|m-\tilde{m}\right|\log2\text{ for }\left|m-\tilde{m}\right|\ge1.\label{eq: J diff first bound}
\end{equation}
From the shape of $J'\left(x\right)=\frac{1}{2}\log\frac{1+x}{1-x}$
(for $x\in\left(-1,1\right)$, otherwise $J'\left(x\right)=0$) we
see that if $\left|m-\tilde{m}\right|\le1$ then
\begin{equation}
\begin{array}{ccccl}
\left|J\left(\tilde{m}\right)-J\left(m\right)\right| & \le & \frac{1}{2}\int_{1-\left|\tilde{m}-m\right|}^{1}\log\frac{1+x}{1-x}dx & \le & \frac{1}{2}\int_{0}^{\left|\tilde{m}-m\right|}\log\frac{2}{z}dz\\
 &  &  & = & \frac{1}{2}\left|\tilde{m}-m\right|\log\frac{2e}{\left|\tilde{m}-m\right|}.
\end{array}\label{eq: J diff second bound}
\end{equation}
Combining (\ref{eq: J diff first bound}) and (\ref{eq: J diff second bound})
yields (\ref{eq: J diff}).
\end{proof}
The required bound on the entropy function is the following. Let $d\left(x,A\right)=\inf_{y\in A}\|x-y\|$
for $x\in\mathbb{R}^{N},A\subset\mathbb{R}^{N}$.
\begin{lem}[Entropy for Ising reference measure]
\label{lem: ising entropy lemma} Let $E$ be the uniform measure
on $\left\{ -1,1\right\} ^{N}$ and let $I_{E,\delta}$ be as in (\ref{eq: IN}).
For any $N\ge1,$$\delta\in\left(0,1\right)$ the following holds.
\begin{enumerate}
\item[a)] For all $m\in B_{N}$ 
\begin{equation}
I_{E,\delta}\left(m\right)\le I_{{\rm Ising}}\left(m\right)+cN\delta\log\delta^{-1}.\label{eq: Ising entropy 1}
\end{equation}
\item[b)] For $m\in B_{N}$ such that $d\left(m,\left[-1,1\right]^{N}\right)>\delta$
it holds that
\begin{equation}
I_{E,\delta}\left(m\right)=-\infty.\label{eq: Ising entropy 2}
\end{equation}
\end{enumerate}
\end{lem}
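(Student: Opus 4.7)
The plan is to handle part (b) first with a geometric choice of $\lambda$, and then to reduce part (a) to the case $d(m,[-1,1]^N)\le\delta$ via part (b) and execute a tilted Chernoff bound.

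\smallskip

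\noindent\emph{Part (b).} Let $\pi(m)\in[-1,1]^N$ denote the coordinate-wise projection of $m$, so $\pi(m)_i=\operatorname{sign}(m_i)\min(|m_i|,1)$ and $\|m-\pi(m)\|=d(m,[-1,1]^N)$. I take $\lambda=(m-\pi(m))/\|m-\pi(m)\|$. A coordinate-wise check shows $(m_i-\pi(m)_i)(\sigma_i-\pi(m)_i)\le 0$ for every $\sigma_i\in\{-1,+1\}$: the product vanishes when $|m_i|\le1$, and when $|m_i|>1$ the first factor $\operatorname{sign}(m_i)(|m_i|-1)$ and the second factor $\sigma_i-\operatorname{sign}(m_i)\in\{-2,0,2\}$ either have opposite signs or one of them is zero. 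Summing and using $\sigma-m=(\sigma-\pi(m))-(m-\pi(m))$ gives
\[
\langle\lambda,\sigma-m\rangle=\frac{\langle m-\pi(m),\sigma-\pi(m)\rangle}{\|m-\pi(m)\|}-\|m-\pi(m)\|\le -d(m,[-1,1]^N)<-\delta
\]
for every $\sigma\in\{-1,1\}^N$. Hence the half-space carries no $E$-mass and (\ref{eq: Ising entropy 2}) follows.

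\smallskip

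\noindent\emph{Part (a).} If $d(m,[-1,1]^N)>\delta$ then $I_{E,\delta}(m)=-\infty$ by (b) and (\ref{eq: Ising entropy 1}) is automatic, so I may assume $d(m,[-1,1]^N)\le\delta$. Truncate by $\tilde m_i=\operatorname{sign}(m_i)\min(|m_i|,1-\delta)$, set $\theta_i=\tanh^{-1}(\tilde m_i)$ (so $|\theta_i|\le\tfrac12\log(2/\delta)$ and hence $\|\theta\|\le\tfrac12\log(2/\delta)$), and pick the unit vector $\lambda=\theta/\|\theta\|$; if $\theta=0$ then $m=0$, $I_{\rm Ising}(0)=0$, and (\ref{eq: Ising entropy 1}) is immediate from $I_{E,\delta}(0)\le 0$. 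A Chernoff bound on $\sum_i\lambda_i\sigma_i$ at parameter $t=\|\theta\|$, so that $t\lambda_i=\theta_i$, under the product Bernoulli law $E$ yields
\[
\log E[\langle\lambda,\sigma-m\rangle\ge-\delta]\le -\sum_i\theta_i m_i+\sum_i\log\cosh(\theta_i)+\|\theta\|N\delta.
\]
The identity $\theta_i\tilde m_i-\log\cosh(\theta_i)=J(\tilde m_i)$ (direct from $\cosh(\tanh^{-1}x)=(1-x^2)^{-1/2}$), combined with the sign observation $\theta_i(m_i-\tilde m_i)\ge 0$ (both factors carry the sign of $m_i$), reduces the right-hand side to $-\sum_i J(\tilde m_i)+\|\theta\|N\delta$, and $\|\theta\|N\delta\le\tfrac12 N\delta\log(2/\delta)$ is already of the desired form.

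\smallskip

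\noindent\emph{From $\tilde m$ back to $m$.} This is the routine but fiddly step and is the main obstacle. By Cauchy-Schwarz $\sum_i(|m_i|-1)_+\le N\,d(m,[-1,1]^N)\le N\delta$, so $\sum_i|\tilde m_i-m_i|\le cN\delta$. Using (\ref{eq: J diff}) and splitting each index according to whether $|\tilde m_i-m_i|\le\delta$ (where monotonicity of $x\mapsto x\log(2e/x)$ on $(0,2)$ bounds each contribution by $\delta\log(2e/\delta)$) or $|\tilde m_i-m_i|>\delta$ (where $\log(2e/(|\tilde m_i-m_i|\wedge 1))\le\log(2e/\delta)$ pulls out as a uniform factor), one arrives at $\bigl|\sum_i J(\tilde m_i)-\sum_i J(m_i)\bigr|\le cN\delta\log\delta^{-1}$; combining with the previous display gives (\ref{eq: Ising entropy 1}). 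The delicate point is that $\|\theta\|$ must be kept bounded (forcing the truncation), while $m$ may have coordinates outside $[-1,1]$; the Cauchy-Schwarz control on $\sum_i(|m_i|-1)_+$, available precisely thanks to the reduction from part (b), is what keeps the truncation cost at the stated $N\delta\log\delta^{-1}$.
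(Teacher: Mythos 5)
Your proof is correct and follows essentially the paper's approach: the exponential Chebyshev (Chernoff) bound, the same truncation $\tilde m_i$ and tilting parameter $\theta_i=\text{atanh}(\tilde m_i)$, and the same Legendre-type identity $\theta_i\tilde m_i-\log\cosh\theta_i=J(\tilde m_i)$; your part (b) is likewise the paper's separating-hyperplane argument made explicit via the coordinate-wise projection $\pi(m)$. The one place you take a longer road is the conversion from $\tilde m$ back to $m$: the reduction to $d(m,[-1,1]^N)\le\delta$ via part (b), and the ensuing Cauchy--Schwarz control of $\sum_i|m_i-\tilde m_i|$, are not needed, because $|J(m_i)-J(\tilde m_i)|$ can be bounded per coordinate by $J(1)-J(1-\delta)\le c\delta\log\delta^{-1}$ whenever the truncation is active (as $J$ is monotone on $[1-\delta,1]$ and identically $\log 2$ outside $[-1,1]$), no matter how far $m_i$ lies outside $[-1,1]$. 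So the ``delicate point'' you flag is in fact not delicate at all once one exploits the boundedness and monotonicity of the extended $J$, and the dependence of part (a) on part (b) can be removed.
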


\begin{proof}
a) Note that for all $\lambda\in\mathbb{R},m\in\mathbb{R}$ and $i=1,\ldots,N$
\begin{equation}
\log E\left[\exp\left(\lambda\left(\sigma_{i}-m\right)\right)\right]=\log\cosh\left(\lambda\right)-\lambda m.\label{eq: exp mom of spin}
\end{equation}
Also 
\begin{equation}
\inf_{\lambda\in\mathbb{R}}\left\{ \log\cosh\left(\lambda\right)-\lambda m\right\} =\begin{cases}
-J\left(m\right) & \text{ if }\left|m\right|\le1,\text{ achieved for }\lambda=\text{atanh}\left(m\right),\\
-\infty & \text{ if }\left|m\right|>1,\text{ achieved for }\lambda\to\pm\infty.
\end{cases}\label{eq: exp mom minimizer}
\end{equation}
Now for $m\in B_{N}$ we have using the exponential Chebyshev inequality
\begin{equation}
\begin{array}{ccl}
I_{E,\delta}\left(m\right) & \overset{\eqref{eq: IN}}{=} & \inf_{\lambda\in\mathbb{R}^{N},\left|\lambda\right|=1}\log E\left[\lambda\cdot\left(\sigma-m\right)\ge-\delta\sqrt{N}\right]\\
 & \le & \inf_{r>0}\inf_{\lambda\in\mathbb{R}^{N},\left|\lambda\right|=1}\log\left(E\left[\exp\left(r\lambda\cdot\left(\sigma-m\right)\right)\right]e^{r\delta\sqrt{N}}\right)\\
 & = & \inf_{\lambda\in\mathbb{R}^{N}}\left\{ \log E\left[\exp\left(\lambda\cdot\left(\sigma-m\right)\right)\right]+\left|\lambda\right|\delta\sqrt{N}\right\} \\
 & \overset{\eqref{eq: exp mom of spin}}{=} & \inf_{\lambda\in\mathbb{R}^{N}}\left\{ \sum_{i=1}^{N}\left(\log\cosh\left(\lambda_{i}\right)-\lambda_{i}m_{i}\right)+\left|\lambda\right|\delta\sqrt{N}\right\} .
\end{array}\label{eq: IN bound}
\end{equation}
By (\ref{eq: exp mom minimizer}) the choice $\lambda_{i}=\text{atanh}\left(m_{i}\right)$
or $\lambda_{i}\to\pm\infty$ would be optimal for sum in the $\inf$,
but if some coordinates of $m$ are above or close to $\pm1$ it makes
the term $\left|\lambda\right|\delta\sqrt{N}$ explode. Therefore
we choose 
\begin{equation}
\lambda_{i}=\text{atanh}\left(\tilde{m}_{i}\right)\text{ where }\tilde{m}_{i}=\begin{cases}
1-\delta & \text{ if }m_{i}\ge1-\delta,\\
m_{i} & \text{ if }m_{i}\in\left[-\left(1-\delta\right),1-\delta\right],\\
-\left(1-\delta\right) & \text{\,if }m_{i}\le-\left(1-\delta\right).
\end{cases}\label{eq: m tilde def}
\end{equation}
We have
\[
\left|\lambda\right|\delta\sqrt{N}\le\text{atanh}\left(1-\delta\right)\delta N\le c\delta\log\delta^{-1}N\text{\,for all }\delta\in\left(0,1\right),N\ge1,
\]
and
\[
\log\cosh\left(\lambda_{i}\right)-\lambda_{i}m_{i}\overset{\eqref{eq: m tilde def}}{\le}\log\cosh\left(\lambda_{i}\right)-\lambda_{i}\tilde{m}_{i}\overset{\eqref{eq: exp mom minimizer},\eqref{eq: m tilde def}}{=}-J\left(\tilde{m}_{i}\right).
\]
so we obtain from (\ref{eq: IN bound})
\[
I_{E,\delta}\left(m\right)\le-\sum_{i=1}^{N}J\left(\tilde{m}_{i}\right)+c\delta\log\delta^{-1}N.
\]
Recalling that $I_{{\rm Ising}}\left(m\right)=-\sum_{i=1}^{N}J\left(m_{i}\right)$
and noting that $J\left(\tilde{m}_{i}\right)=J\left(m_{i}\right)$
if $\left|m_{i}\right|\le1-\delta$, and otherwise
\[
\left|J\left(m_{i}\right)-J\left(\tilde{m}_{i}\right)\right|\overset{\eqref{eq: extended J def},\eqref{eq: m tilde def}}{\le}\left|J\left(1\right)-J\left(1-\delta\right)\right|\overset{\eqref{eq: J diff}}{\le}c\delta\log\delta^{-1},
\]
the claim (\ref{eq: Ising entropy 1}) then follows.

b) Since $\left[-1,1\right]^{N}$ is convex, if $d(m,[-1,1]^{N})>\delta$
then there is a hyperplane separating $m$ from $\left[-1,1\right]^{N}$
at $\|\cdot\|$-distance greater than $\delta$ from $m$. This means
that there exists a $\lambda\in S_{N-1}$ such that
\[
\left\langle \lambda,\tilde{m}-m\right\rangle <-\delta\text{ for all }\tilde{m}\in\left[-1,1\right]^{N}.
\]
This in particular holds for any $\sigma\in\left\{ -1,1\right\} ^{N}$
in place of $\tilde{m}$, so 
\[
E\left[\left\langle \lambda,\sigma-m\right\rangle \ge-\delta\right]=0,
\]
so that (\ref{eq: Ising entropy 2}) follows from the definition (\ref{eq: IN})
of $I_{E,\delta}$.
\end{proof}
We also need the following continuity estimate for $I_{{\rm Ising}}$.
\begin{lem}
\label{lem: Iising Lipschitz}It holds for all $m,\tilde{m}\in B_{N}$
that
\[
\left|I_{{\rm Ising}}\left(m\right)-I_{{\rm Ising}}\left(\tilde{m}\right)\right|\le2N\|m-\tilde{m}\|\log\frac{4e}{\|m-\tilde{m}\|}.
\]
\end{lem}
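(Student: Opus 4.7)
The plan is to reduce the claim coordinate-by-coordinate to the scalar continuity estimate for $J$ given in \eqref{eq: J diff}, and then to control the resulting sum by Jensen's inequality applied to a well-chosen concave function. First, by the triangle inequality and \eqref{eq: ising entropy I}, followed by \eqref{eq: J diff}, we get
\begin{equation*}
|I_{{\rm Ising}}(m)-I_{{\rm Ising}}(\tilde{m})|
\le \sum_{i=1}^{N}|J(m_i)-J(\tilde{m}_i)|
\le \sum_{i=1}^{N} \phi(|m_i-\tilde{m}_i|),
\end{equation*}
where $\phi(x)=x\log\frac{2e}{x\wedge 1}$ for $x>0$ and $\phi(0)=0$.

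Next I would check that $\phi$ is non-decreasing and concave on $[0,\infty)$. On $(0,1]$ one has $\phi(x)=x(1+\log 2)-x\log x$, so $\phi'(x)=\log(2/x)>0$ and $\phi''(x)=-1/x<0$; on $[1,\infty)$ the function is the linear map $x\mapsto x\log(2e)$, and the left and right derivatives at $x=1$ both equal $\log 2$, so $\phi$ is $C^{1}$ and concave on all of $[0,\infty)$.

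With monotonicity and concavity in hand, the key step is Jensen's inequality: writing $d_i=|m_i-\tilde{m}_i|$,
\begin{equation*}
\frac{1}{N}\sum_{i=1}^{N}\phi(d_i)\le \phi\!\left(\frac{1}{N}\sum_{i=1}^{N}d_i\right)
\le \phi\!\left(\sqrt{\tfrac{1}{N}\sum_{i=1}^{N}d_i^2}\right)=\phi(\|m-\tilde{m}\|),
\end{equation*}
where the second inequality uses Cauchy--Schwarz together with the fact that $\phi$ is non-decreasing. Thus $\sum_i\phi(d_i)\le N\phi(d)$ with $d=\|m-\tilde{m}\|$.

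Finally I would verify by an elementary case split that $\phi(d)\le 2d\log(4e/d)$ for all $d\in(0,2]$ (noting that $d\le 2$ since $m,\tilde{m}\in B_N$). For $d\le 1$ this is trivial because $\log(2e/d)\le \log(4e/d)\le 2\log(4e/d)$. For $d\in(1,2]$ one has $\phi(d)=d\log(2e)$ and the inequality reduces to $2e\le 16e^2/d^2$, i.e.\ $d^2\le 8e$, which holds since $d^2\le 4<8e$. Combining the steps yields the stated bound. No real obstacle arises here; the only care needed is to chain monotonicity with Cauchy--Schwarz (since Jensen's inequality alone only gives control in terms of the arithmetic mean of the $d_i$, not of $\|m-\tilde{m}\|$) and to accommodate the mild non-smoothness of $\phi$ at $x=1$.
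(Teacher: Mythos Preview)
Your argument has a genuine gap: the function $\phi(x)=x\log\frac{2e}{x\wedge 1}$ is \emph{not} concave on $[0,\infty)$, and hence your Jensen step is unjustified. On $[1,\infty)$ the function is $x\mapsto x\log(2e)$, whose derivative is $\log(2e)=1+\log 2$, while on $(0,1]$ you correctly compute $\phi'(x)=\log(2/x)$, giving left derivative $\log 2$ at $x=1$. So the one-sided derivatives do \emph{not} agree; the derivative jumps up by $1$ at $x=1$, which is exactly the wrong direction for concavity. Concretely, with $d_1=\tfrac12$ and $d_2=\tfrac32$ one has $\tfrac12\bigl(\phi(\tfrac12)+\phi(\tfrac32)\bigr)=\tfrac12\bigl(\tfrac12\log(4e)+\tfrac32\log(2e)\bigr)>\log(2e)=\phi(1)$, so Jensen fails. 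Since $m,\tilde m\in B_N$ allows individual differences $d_i=|m_i-\tilde m_i|$ well above $1$, this regime is relevant and cannot be ignored.

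The paper's proof avoids this by first splitting off a linear term: it uses the crude bound $\phi(d_i)\le f(d_i\wedge 1)+d_i\log(2e)$ with $f(x)=x\log(2e/x)$, sums, and controls $\sum_i d_i$ by $N\|m-\tilde m\|$ via Cauchy--Schwarz. Jensen is then applied to the genuinely concave function $f$ evaluated at the truncated arguments $d_i\wedge 1\in[0,1]$, and monotonicity of $f$ on $[0,2]$ together with $\frac1N\sum_i(d_i\wedge 1)\le\|m-\tilde m\|$ finishes the job. Your overall strategy (coordinatewise bound, then Jensen plus Cauchy--Schwarz) is the right one, but the concavity has to be secured by working with $f$ on truncated arguments rather than with $\phi$ directly.
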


\begin{proof}
By (\ref{eq: J diff}) we have for all $m,\tilde{m}\in\mathbb{R}^{N}$,
\begin{equation}
\left|I_{{\rm Ising}}\left(m\right)-I_{{\rm Ising}}\left(\tilde{m}\right)\right|\le\sum_{i=1}^{N}\left|m_{i}-\tilde{m}_{i}\right|\log\frac{2e}{\left|m_{i}-\tilde{m}_{i}\right|\wedge1}.\label{eq: from J diff}
\end{equation}
Note that
\begin{equation}
\sum_{i=1}^{N}\left|m_{i}-\tilde{m}_{i}\right|\le\sqrt{N}\left|m-\tilde{m}\right|=N\|m-\tilde{m}\|.\label{eq: l1 l2}
\end{equation}
Letting $f\left(x\right)=x\log\frac{2e}{x}$ the RHS of (\ref{eq: from J diff})
is thus bounded by
\[
\sum_{i=1}^{N}f\left(\left|m_{i}-\tilde{m}_{i}\right|\wedge1\right)+N\|m-\tilde{m}\|\log\left(2e\right).
\]
Note that $f\left(x\right)$ is concave on $[0,\infty)$ so 
\[
\sum_{i=1}^{N}f\left(\left|m_{i}-\tilde{m}_{i}\right|\wedge1\right)\le Nf\left(\frac{1}{N}\sum_{i=1}^{N}\left|m_{i}-\tilde{m}_{i}\right|\wedge1\right).
\]
Also $f$ is increasing on $\left[0,2\right]$ and
\[
\frac{1}{N}\sum_{i=1}^{N}\left|m_{i}-\tilde{m}_{i}\right|\wedge1\le\frac{1}{N}\sum_{i=1}^{N}\left|m_{i}-\tilde{m}_{i}\right|\overset{\eqref{eq: l1 l2}}{\le}\|m-\tilde{m}\|\le2,
\]
so that
\[
\sum_{i=1}^{N}f\left(\left|m_{i}-\tilde{m}_{i}\right|\wedge1\right)\le Nf\left(\|m-\tilde{m}\|\right)=N\|m-\tilde{m}\|\log\left(\frac{2e}{\|m-\tilde{m}\|}\right).
\]
Combining these gives the claim.
\end{proof}
From this and the continuity estimates of $H_{N}$ and $f_{N}$ we
obtain the following continuity estimate for $H_{{\rm TAP}}^{{\rm Ising}}\left(m\right)$.
\begin{lem}
\label{lem: HTAP lipschitz}Let $L,\beta,\xi$ and $f_{N}$ be like
in Theorem \ref{thm: main_thm_ising}. There is a constant $c$ such
that on the event $\mathcal{L}_{N}$ from (\ref{eq: lipschitz event})
it holds for all $m,\tilde{m}\in B_{N}^{\circ}$ that
\begin{equation}
\left|H_{{\rm TAP}}^{{\rm Ising}}\left(m\right)-H_{{\rm TAP}}^{{\rm Ising}}\left(\tilde{m}\right)\right|\le c\left(1+L^{3}\right)N\|m-\tilde{m}\|\log\frac{c}{\|m-\tilde{m}\|}\label{eq: HTAP Lipschitz}
\end{equation}
\end{lem}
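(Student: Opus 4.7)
The plan is to decompose
\[
H_{{\rm TAP}}^{{\rm Ising}}(m) = \beta H_N^f(m) + I_{{\rm Ising}}(m) + \tfrac{\beta^{2}}{2}{\rm On}(\|m\|^{2}),
\]
bound the $\|\cdot\|$-Lipschitz constant of each of the three summands separately, and combine them via the triangle inequality. The logarithmic factor appearing in \eqref{eq: HTAP Lipschitz} will enter only through the entropy piece; the other two pieces are simply Lipschitz.

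For the random piece $\beta H_N^f$, on the event $\mathcal{L}_N$ defined in \eqref{eq: lipschitz event} the Hamiltonian $H_N$ is $\|\cdot\|$-Lipschitz with constant at most $c_\xi N$, where $c_\xi = c\sqrt{\xi'''(1)} \le cL$. Combined with the hypothesis that $f_N$ is $\|\cdot\|$-Lipschitz with constant at most $LN$ and $\beta \le L$, this yields $|\beta H_N^f(m) - \beta H_N^f(\tilde m)| \le cL^{2} N\|m-\tilde m\|$. The entropy piece is handled by a direct application of Lemma \ref{lem: Iising Lipschitz}, which gives $|I_{{\rm Ising}}(m) - I_{{\rm Ising}}(\tilde m)| \le 2N\|m-\tilde m\|\log\frac{4e}{\|m-\tilde m\|}$ and supplies the log factor in \eqref{eq: HTAP Lipschitz}. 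For the Onsager piece I would differentiate \eqref{eq: Onsager term def} to obtain ${\rm On}'(q) = -(1-q)\xi''(q)$, so $|{\rm On}'(q)| \le \xi''(1)$ on $[0,1]$, and then use $|\|m\|^{2}-\|\tilde m\|^{2}| \le (\|m\|+\|\tilde m\|)\|m-\tilde m\| \le 2\|m-\tilde m\|$ on $B_N$ to conclude $\frac{\beta^{2}}{2}|{\rm On}(\|m\|^{2}) - {\rm On}(\|\tilde m\|^{2})| \le \beta^{2}\xi''(1)\|m-\tilde m\| \le cL^{3}\|m-\tilde m\|$, invoking $\xi''(1) \le cL^{2}$ (which follows since $p(p-1) \le p(p-1)(p-2)$ for $p \ge 3$ gives $\sum_{p \ge 3} p(p-1)a_p \le \xi'''(1) \le L^{2}$, with the $p=2$ contribution absorbed into the constant).

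Summing the three bounds, the pure Lipschitz contributions from the Hamiltonian and Onsager pieces are dominated by the entropy-type bound $c(1+L^{3})N\|m-\tilde m\|\log\frac{c}{\|m-\tilde m\|}$ on the range $\|m-\tilde m\| \le c/e$, and the desired inequality is trivial otherwise for $c$ large enough, since the LHS of \eqref{eq: HTAP Lipschitz} is bounded on $B_N^{\circ}$ on $\mathcal{L}_N$. This yields \eqref{eq: HTAP Lipschitz}. There is no real conceptual obstacle beyond bookkeeping of constants; the only mildly non-trivial point is the control of $\xi''(1)$ in terms of $L$, which rests on the structural assumption $a_p \ge 0$.
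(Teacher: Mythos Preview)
Your proof is correct and follows essentially the same route as the paper: handle the three summands of $H_{\rm TAP}^{\rm Ising}$ separately --- the event $\mathcal{L}_N$ and the Lipschitz hypothesis for $\beta H_N^f$, Lemma~\ref{lem: Iising Lipschitz} for the entropy term, and ${\rm On}'(q)=-(1-q)\xi''(q)$ for the Onsager piece --- then combine, the paper simply observing that $\log\frac{c}{\|m-\tilde m\|}\ge1$ on $B_N$ for $c$ large rather than splitting into cases. One small caveat: your remark that the $p=2$ contribution to $\xi''(1)$ can be ``absorbed into the constant'' is not strictly justified, since the hypotheses bound only $\xi'''(1)$ and place no constraint on $a_2$; the paper glosses over the same point by asserting $\xi''(1)\le\xi'''(1)$.
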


\begin{proof}
We treat each term of $H_{{\rm TAP}}^{{\rm Ising}}\left(m\right)$
separately. 

Lemma \ref{lem: Iising Lipschitz} gives the sufficient bound for
the entropy term $I_{{\rm Ising}}$.

On the event (\ref{eq: lipschitz event}) we have
\[
\left|\beta H_{N}\left(m\right)-\beta H_{N}\left(\tilde{m}\right)\right|\le cL^{2}N\|m-\tilde{m}\|\text{ for all }m,\tilde{m}\in B_{N}^{\circ}.
\]
By the Lipschitz assumption
\[
\left|\beta f_{N}\left(m\right)-\beta f_{N}\left(\tilde{m}\right)\right|\le L^{2}N\|m-\tilde{m}\|\text{ for all }m,\tilde{m}\in B_{N}.
\]
For the Onsager term, note that from (\ref{eq: Onsager term def})
it follows that ${\rm On}\left(q\right)$ is Lipschitz with constant
bounded by $c\xi''\left(1\right)\le cL$ on $\left[0,1\right]$, so
that since\\
$\left|\|m\|^{2}-\|\tilde{m}\|^{2}\right|\le\left|\|m\|+\|\tilde{m}\right|\left|\|m\|-\|\tilde{m\|}\right|\le2\|m-\tilde{m}\|$
we have
\[
\left|N\frac{\beta^{2}}{2}{\rm On}\left(\|m\|^{2}\right)-N\frac{\beta^{2}}{2}{\rm On}\left(\|\tilde{m}\|^{2}\right)\right|\le cL^{3}N\|m-\tilde{m}\|\text{\,for all }m,\tilde{m}\in B_{N}.
\]
Combing all of the above imply (\ref{eq: HTAP Lipschitz}), since
for a large enough $c$ we have that $\log\frac{c}{\|m-\tilde{m}\|}\ge1$
for $m,\tilde{m}\in B_{N}$.
\end{proof}
Finally we derive the TAP upper bound for the Ising SK model from
the general TAP upper bound.
\begin{proof}[Proof of Theorem \ref{thm: main_thm_ising}]
Let $\tilde{\delta}>0$. By the general bound Theorem \ref{thm: main_thm_general}
with $\tilde{\delta}$ in place of $\delta$ we obtain that
\begin{equation}
\mathbb{P}\left(\log E\left[\exp\left(\beta H_{N}^{f}\left(\sigma\right)\right)\right]\le\sup_{m\in B_{N}}H_{{\rm TAP}}^{E,\tilde{\delta}}\left(m\right)+\tilde{\delta}N\right)\ge1-\bar{\kappa}^{\bar{\kappa}}e^{-\frac{\tilde{\delta}}{2}N},\label{eq: from general bound}
\end{equation}
where $\bar{\kappa}=c\max(K,(\beta L/\tilde{\delta})^{8})\le c\max(K,L^{16}/\tilde{\delta}^{8})$.

By the definitions (\ref{eq: HTAP ising def}), (\ref{eq: HTAP measure on subset of sphere})
of $H_{{\rm TAP}}^{{\rm Ising}}\left(m\right)$ and $H_{{\rm TAP}}^{E,\delta}$
and Lemma \ref{lem: ising entropy lemma} with $\tilde{\delta}$ in
place of $\delta$ 
\begin{equation}
\sup_{m\in B_{N}^{\circ}}H_{{\rm TAP}}^{E,\tilde{\delta}}\left(m\right)\le\sup_{m\in B_{N}^{\circ}:d\left(m,\left(-1,1\right)^{N}\right)\le\tilde{\delta}}H_{{\rm TAP}}^{{\rm Ising}}\left(m\right)+cN\tilde{\delta}\log\tilde{\delta}^{-1}.\label{eq: ising UB first step}
\end{equation}
Using Lemma \ref{lem: HTAP lipschitz} we obtain that on the event
$\mathcal{L}_{N}$
\begin{equation}
\sup_{m\in B_{N}^{\circ}\backslash(-1,1)^{N}:d\left(m,\left(-1,1\right)^{N}\right)\le\tilde{\delta}}H_{{\rm TAP}}^{{\rm Ising}}\left(m\right)\le\sup_{m\in\left(-1,1\right)^{N}}H_{{\rm TAP}}^{{\rm Ising}}\left(m\right)+c\left(1+L^{3}\right)N\tilde{\delta}\log\frac{c}{\tilde{\delta}}.\label{eq: Ising second step}
\end{equation}
By picking $\tilde{\delta}>0$ small enough depending on $\delta,L$,
combing (\ref{eq: from general bound})-(\ref{eq: Ising second step})
and using (\ref{eq: the event prob}) we obtain that
\[
\mathbb{P}\left(\log E\left[\exp\left(\beta H_{N}^{f}\left(\sigma\right)\right)\right]\le\sup_{m\in B_{N}^{\circ}}H_{{\rm TAP}}^{{\rm Ising}}\left(m\right)+\delta N\right)\ge1-\bar{\kappa}^{\bar{\kappa}}e^{-\frac{\tilde{\delta}}{2}N}-e^{-N}.
\]
Now by picking $c_{1}\le\frac{\tilde{\delta}}{4}$ and small enough
so that $(cK)^{cK}\le e^{\frac{\tilde{\delta}}{4}N}$ for all $K\le c_{1}N/\log N$
and $N\ge1$, and $1+(cL^{16}/\tilde{\delta}^{8})^{cL^{16}/\tilde{\delta}^{8}}\le c_{1}^{-1}$,
we get that the right-hand side is at least $1-c_{1}^{-1}e^{-c_{1}N}$,
giving the claim (\ref{eq: Ising TAP UB}).
\end{proof}

\section{\label{sec: Spherical SK proof}Bounding the spherical entropy and
proof of spherical TAP upper bound}

In this section we derive the TAP upper bound for the spherical mixed
SK model from the general TAP upper bound, by explicitly bounding
the entropy function $I_{E,\delta}$ from (\ref{eq: IN}) in terms
of by $I_{{\rm sph}}$ from (\ref{eq: spherical entropy I}).
\begin{lem}[Entropy for spherical reference measure]
\label{lem: IN spherical-1}Let $E$ be the uniform measure on $S_{N-1}$
and let $I_{E,\delta}$ be as in (\ref{eq: IN}). For all $\delta\in\left(0,1\right)$
and all large enough $N$ depending on $\delta$ we have for all $m\in B_{N}^{\circ}$
\begin{equation}
I_{E,\delta}\left(m\right)\le\frac{N}{2}\log\left(1-\|m\|^{2}+2\delta\|m\|\right)+\delta N\le I_{{\rm sph}}\left(m\right)+N\delta\left(1+\frac{\|m\|}{1-\|m\|^{2}}\right).\label{eq: IN spherical}
\end{equation}
\end{lem}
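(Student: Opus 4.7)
\emph{Proof proposal.} The plan is to plug $\lambda=m/\|m\|$ into the infimum defining $I_{E,\delta}(m)$ in \eqref{eq: IN}, reduce to a one-dimensional spherical-cap integral, and estimate it explicitly; the second inequality then follows from concavity of $\log$. For $m\neq 0$, with this choice the event $\langle\lambda,\sigma-m\rangle\geq-\delta$ becomes $\{t\geq\|m\|-\delta\}$ where $t:=\langle m/\|m\|,\sigma\rangle$, and by rotational invariance of $E$ on $S_{N-1}$ the law of $t$ has density proportional to $(1-t^2)^{(N-3)/2}\mathbf{1}_{[-1,1]}$ with normalizing constant $B(\tfrac{1}{2},\tfrac{N-1}{2})\geq c/\sqrt{N}$ by Stirling (for $N$ large). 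The case $m=0$ is trivial since the right-hand side of \eqref{eq: IN spherical} is nonnegative. Similarly, if $\|m\|\leq 2\delta$ then $1-\|m\|^2+2\delta\|m\|\geq 1$, so the first right-hand side is $\geq\delta N\geq 0\geq\log E[\cdots]$ and the inequality is automatic.

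Assume now $\|m\|>2\delta$, so $a:=\|m\|-\delta>\delta>0$. The crude bound $\int_a^1(1-t^2)^{(N-3)/2}\,dt\leq(1-a^2)^{(N-3)/2}$ (from monotonicity of $t\mapsto 1-t^2$ on $[a,1]$ and interval length $\leq 1$), combined with the lower bound on the normalizing constant, yields
\begin{equation*}
\log\Pr[t\geq a]\leq\tfrac{N-3}{2}\log(1-a^2)+\tfrac{1}{2}\log N+O(1).
\end{equation*}
Writing $1-a^2=1-\|m\|^2+2\delta\|m\|-\delta^2\leq 1-\|m\|^2+2\delta\|m\|$ and replacing the prefactor $\tfrac{N-3}{2}$ by $\tfrac{N}{2}$ costs the positive correction $-\tfrac{3}{2}\log(1-a^2)$, which is bounded by $3\log(1/\delta)$ using $1-a^2\geq 2\delta\|m\|-\delta^2\geq\delta^2$ (valid since $\|m\|\geq\delta$). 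The residual $\tfrac{1}{2}\log N+3\log(1/\delta)+O(1)$ is absorbed into $\delta N$ for $N$ large in terms of $\delta$, giving the first inequality in \eqref{eq: IN spherical}. The main obstacle here is uniform control when $\|m\|$ is near $1$ and $1-a^2$ can be very small; the elementary identity $1-a^2\geq\delta^2$ exactly resolves this and is the reason for the ``$N$ large enough depending on $\delta$'' hypothesis.

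The second inequality is a one-line consequence of $\log(1+x)\leq x$: applying it with $x=2\delta\|m\|/(1-\|m\|^2)>-1$ (valid since $\|m\|<1$) gives $\log(1-\|m\|^2+2\delta\|m\|)-\log(1-\|m\|^2)\leq 2\delta\|m\|/(1-\|m\|^2)$; multiplying by $N/2$, adding $\delta N$ to both sides, and recalling $I_{\rm sph}(m)=\tfrac{N}{2}\log(1-\|m\|^2)$ completes the chain in \eqref{eq: IN spherical}.
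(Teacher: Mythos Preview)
Your proof is correct and follows essentially the same approach as the paper: choose $\lambda=m/\|m\|$ in \eqref{eq: IN}, use the explicit density of $\langle m/\|m\|,\sigma\rangle$ under the uniform measure on $S_{N-1}$, bound the tail integral crudely by $\sqrt{N}(1-a^2)^{(N-3)/2}$, and absorb the sub-leading terms into $\delta N$ for $N$ large. Your treatment is in fact slightly more careful than the paper's in that you handle the regime $\|m\|\le 2\delta$ explicitly (where the inequality is trivial) and you spell out the ``elementary'' second inequality via $\log(1+x)\le x$, whereas the paper leaves both implicit.
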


\begin{proof}
Firstly simply by choosing $\lambda=\frac{m}{\|m\|}$ in (\ref{eq: IN})
we have that
\begin{equation}
I_{E,\delta}\left(m\right)\le E\left[\left\langle \sigma,\frac{m}{\|m\|}\right\rangle \ge\|m\|-\delta\right].\label{eq: sphere entropy firstly}
\end{equation}
Next by \cite[(2.8)]{BeliusKistler2spin} (whose $E$ is the uniform
distribution on $\left\{ \sigma\in\mathbb{R}^{N}:\left|\sigma\right|=1\right\} $)
we have for any $u$ with $\|u\|=1$ that
\[
E\left[\left\{ \sigma:\left\langle \sigma,u\right\rangle \ge\alpha\right\} \right]=\int_{\alpha}^{1}\frac{1}{\sqrt{\pi}}\frac{\Gamma\left(\frac{N}{2}\right)}{\Gamma\left(\frac{N-1}{2}\right)}\left(1-x^{2}\right)^{\frac{N-3}{2}}dx\le\sqrt{\frac{N}{2\pi}}\left(1-\alpha^{2}\right)^{\frac{N-3}{2}},
\]
where we used that $\frac{\Gamma\left(\frac{N}{2}\right)}{\Gamma\left(\frac{N-1}{2}\right)}\le\sqrt{\frac{N}{2}}.$
Taking the log of both sides we get that for all $\alpha\le1-\delta$
\[
\log E\left[\left\{ \sigma:\left\langle \sigma,u\right\rangle \ge\alpha\right\} \right]\le\frac{N}{2}\log\left(1-\alpha^{2}\right)+\delta N,
\]
since $\log\sqrt{\frac{N}{2\pi}}\left(1-\alpha^{2}\right)^{\frac{-3}{2}}\le\delta N$
for all $N$ large enough for such $\alpha$. Applying this to the
right-hand side of (\ref{eq: sphere entropy firstly}) with $\alpha=\|m\|-\delta$
gives the first inequality of (\ref{eq: IN spherical}). Recalling
(\ref{eq: spherical entropy I}), the second inequality is elementary.
\end{proof}
We now we give the proof of Theorem \ref{thm: main_thm_spherical}
from the general result Theorem \ref{thm: main_thm_general}. We will
use that by (\ref{eq: bound on energy}) and $\xi\left(1\right)\le\xi'\left(1\right)$
\begin{equation}
\mathbb{P}\left(\sup_{m\in B_{N}}\left|H_{N}\left(m\right)\right|\ge c\sqrt{\xi'\left(1\right)}\right)\le e^{-N}\text{\,for all }N\ge1,\label{eq: max bound}
\end{equation}
for a large enough universal $c$.
\begin{proof}[Proof of Theorem \ref{thm: main_thm_spherical}]
 By the general bound Theorem \ref{thm: main_thm_general} with $\tilde{\delta}$
in place of $\delta$ we obtain that for any $\tilde{\delta}>0$
\begin{equation}
\mathbb{P}\left(\log E\left[\exp\left(\beta H_{N}^{f}\left(\sigma\right)\right)\right]\le\sup_{m\in B_{N}}H_{{\rm TAP}}^{E,\tilde{\delta}}\left(m\right)+\tilde{\delta}N\right)\ge1-\bar{\kappa}^{\bar{\kappa}}e^{-\frac{\tilde{\delta}}{2}N},\label{eq: from general bound-1}
\end{equation}
where $\bar{\kappa}=c\max(K,(\beta L/\tilde{\delta})^{8})\le c\max(K,L^{16}/\tilde{\delta}{}^{8})$.
Let $\gamma\in\left(0,1\right)$ to be fixed later. Lemma \ref{lem: IN spherical-1}
implies that for any $\tilde{\delta}\in\left(0,1\right)$ it holds
for $N\ge c(\tilde{\delta})$ that 
\[
I_{E,\tilde{\delta}}\left(m\right)\le I_{{\rm sph}}\left(m\right)+cN\frac{\tilde{\delta}}{\gamma},\text{ for }m\in B_{N}^{\circ}\text{ such that }1-\|m\|^{2}\ge\gamma.
\]
Thus
\begin{equation}
\sup_{m:\|m\|^{2}\le1-\gamma}H_{{\rm TAP}}^{E,\tilde{\delta}}\left(m\right)\le\sup_{m:\frac{\left|m\right|^{2}}{N}\le1-\gamma}H_{{\rm TAP}}^{{\rm sph}}\left(m\right)+c\frac{\tilde{\delta}}{\gamma}N.\label{eq: spherical tap bulk}
\end{equation}

To deal with the supremum over $m$ such that $1-\|m\|^{2}\le\gamma$
we note that the crude bound 
\begin{equation}
\mathbb{P}\left(\sup_{m\in B_{N}^{\circ}}\left\{ \beta H_{N}^{f}\left(m\right)+N\frac{\beta^{2}}{2}{\rm On}\left(\|m\|^{2}\right)\right\} \le\beta f_{N}\left(0\right)+cL^{2}N\right)\ge1-e^{-N},\label{eq: crude UB}
\end{equation}
holds by the bound (\ref{eq: max bound}) (recall $\xi'\left(1\right)\le\xi'''\left(1\right)$),
the Lipschitz assumption on $f_{N}$ and that fact that ${\rm On}$
is bounded by $\xi\left(1\right)$ (see its definition (\ref{eq: Onsager term def})).
Also for $m$ such that $1-\|m\|^{2}\le\gamma$ it holds by Lemma
\ref{lem: IN spherical-1} for $N\ge c(\tilde{\delta})$ that 
\[
I_{E,\tilde{\delta}}\left(m\right)\le N\left(\frac{1}{2}\log\left(\gamma+2\tilde{\delta}\right)+\tilde{\delta}\right).
\]
Thus first choosing $\gamma,\tilde{\delta}$ small enough depending
on the $cL^{2}$ in (\ref{eq: crude UB}) we have for such $m$
\[
H_{{\rm TAP}}^{E,\tilde{\delta}}\left(m\right)\le\beta f_{N}\left(0\right)+cL^{2}N+\frac{N}{2}\log\left(\gamma+2\tilde{\delta}\right)\le\beta f_{N}\left(0\right)\le H_{{\rm TAP}}^{{\rm sph}}\left(0\right),
\]
on the event in (\ref{eq: crude UB}), which implies
\[
\sup_{m\in B_{N}^{\circ}:\|m\|^{2}>1-\gamma}H_{{\rm TAP}}^{E,\tilde{\delta}}\left(m\right)\le H_{{\rm TAP}}^{{\rm sph}}\left(0\right).
\]
Then picking $\tilde{\delta}$ possibly even smaller depending on
$\delta$ so that $c\frac{\tilde{\delta}}{\gamma}\le\delta$, we obtain
from (\ref{eq: from general bound-1}), (\ref{eq: spherical tap bulk})
and (\ref{eq: crude UB}) that 
\[
\mathbb{P}\left(\log E\left[\exp\left(\beta H_{N}^{f}\left(\sigma\right)\right)\right]\le\sup_{m\in B_{N}}H_{{\rm TAP}}^{{\rm sph}}\left(m\right)+\delta N\right)\ge1-\bar{\kappa}^{\bar{\kappa}}e^{-\frac{\tilde{\delta}}{2}N}-e^{-N}.
\]
Now by picking $\tilde{c}_{2}\le\frac{\tilde{\delta}}{4}$ and small
enough so that $\bar{\kappa}^{\bar{\kappa}}\le(cK)^{cK}\le e^{\frac{\tilde{\delta}}{4}N}$
for all $K\le\tilde{c}_{2}N/\log N$ and $N\ge1$, and also $1+`(cL^{16}/\tilde{\delta}{}^{8})^{cL^{16}/\tilde{\delta}{}^{8}}\le\tilde{c}_{2}^{-1}$
we get that the right-hand side is at least $1-\tilde{c}_{2}^{-1}e^{-\tilde{c}_{2}N}$,
proving the claim (\ref{eq: spherical main result}) with $c_{2}=\tilde{c}_{2}$
for $N\ge c(\delta)$. By possibly making $c_{2}$ even smaller depending
on $\delta$ the claim (\ref{eq: spherical main result}) can be made
to hold for all $N\ge1$.
\end{proof}

\appendix

\section{$ $}

Here we collect some basic properties about the random field $H_{N}$
that follow from the classical theory of Gaussian processes.

Recall that $a\cdot b$ is the standard inner product and $\left|\cdot\right|$
the standard norm on $\mathbb{R}^{N}$. We furthermore use the inner
product $\left\langle a,b\right\rangle =a\cdot b/N$ for $a,b\in\mathbb{R}^{N}$
and the norm $\|\cdot\|=\left|\cdot\right|/\sqrt{N}$. Recall that
$B_{N}\left(r\right)\subset\mathbb{R}^{N}$ is the closed and $B_{N}^{\circ}\left(r\right)\subset\mathbb{R}^{N}$
the open ball of radius $r$ in the $\|\cdot\|$-norm, and $S_{N-1}\left(r\right)$
is the the sphere of $\|\cdot\|$-radius $r$. The first lemma gives
the existence of $H_{N}$.
\begin{lem}
\label{lem: existance-1}Let $r>0$. If $\xi\left(x\right)=\sum_{p\ge0}a_{p}x^{p}$
is a power series with non-negative coefficients $a_{p}\ge0$ such
that $\xi\left(r^{2}\right)<\infty$ and $N\ge1$ then there exists
a centered Gaussian process $\left(H_{N}\left(\sigma\right)\right)_{\sigma\in B_{N}\left(r\right)}$
with covariance
\begin{equation}
\mathbb{E}\left[H_{N}\left(\sigma\right)H_{N}\left(\sigma'\right)\right]=N\xi\left(\left\langle \sigma,\sigma'\right\rangle \right)\text{ for all }\sigma,\sigma'\in B_{N}\left(r\right).\label{eq: covar appendix-2}
\end{equation}
\end{lem}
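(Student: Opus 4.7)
The plan is to build $H_N$ as an $L^2$-convergent series of independent Gaussian polynomial chaos processes, one for each term of the power series $\xi(x) = \sum_{p \geq 0} a_p x^p$. For each integer $p \geq 0$ I would take an i.i.d.\ family of standard Gaussians $(g^{(p)}_{i_1, \ldots, i_p})_{1 \leq i_1, \ldots, i_p \leq N}$ (a single Gaussian $g^{(0)}$ for $p = 0$), independent across $p$, and set
$$H_N^{(p)}(\sigma) = \sqrt{a_p}\, N^{(1-p)/2} \sum_{i_1, \ldots, i_p = 1}^N g^{(p)}_{i_1, \ldots, i_p}\, \sigma_{i_1} \cdots \sigma_{i_p},$$
with the convention $H_N^{(0)}(\sigma) = \sqrt{N a_0}\, g^{(0)}$. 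A one-line computation using $\sigma \cdot \sigma' = N \langle \sigma, \sigma' \rangle$ yields $\mathbb{E}[H_N^{(p)}(\sigma) H_N^{(p)}(\sigma')] = N a_p \langle \sigma, \sigma' \rangle^p$, so that the individual pieces have exactly the right covariance.

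The candidate process is $H_N(\sigma) = \sum_{p \geq 0} H_N^{(p)}(\sigma)$. To see that this series converges for each fixed $\sigma \in B_N(r)$, observe that the summands are independent centered Gaussians and that their variances sum to $N \sum_{p \geq 0} a_p \|\sigma\|^{2p} \leq N\xi(r^2) < \infty$. Hence the series converges in $L^2$ and, by Kolmogorov's theorem for sums of independent random variables, also almost surely. The limit is centered Gaussian by closedness of centered Gaussian random variables under $L^2$ limits, and by $L^2$-continuity of the inner product together with Cauchy-Schwarz ($|\langle \sigma, \sigma' \rangle| \leq \|\sigma\| \|\sigma'\| \leq r^2$), the covariance of the limit evaluates to $\sum_{p \geq 0} N a_p \langle \sigma, \sigma' \rangle^p = N\xi(\langle \sigma, \sigma' \rangle)$, as required.

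The only mild point to be careful about is that this construction a priori yields only a family of random variables $\{H_N(\sigma)\}_{\sigma \in B_N(r)}$ with consistent finite-dimensional Gaussian distributions, and the exceptional null set in the almost-sure convergence depends on $\sigma$. For the existence statement of the lemma this is harmless: the explicit construction already establishes that the kernel $(\sigma, \sigma') \mapsto N\xi(\langle \sigma, \sigma' \rangle)$ is positive semi-definite, so Kolmogorov's extension theorem produces a centered Gaussian process with this covariance. There is no real obstacle to the existence claim; the almost-sure differentiability on $B_N^\circ(r)$ asserted in the text following the lemma is a separate regularity statement (obtainable by applying a Kolmogorov continuity/differentiability criterion termwise and summing, using the uniform control $\sum_p a_p r^{2p} < \infty$ on compact subsets) and lies outside what Lemma~\ref{lem: existance-1} itself asserts.
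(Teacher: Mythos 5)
Your proof is correct but takes a genuinely different route from the paper's. The paper invokes Schoenberg's theorem to conclude that $\left(\sigma,\sigma'\right)\mapsto N\xi\left(\left\langle \sigma,\sigma'\right\rangle \right)$ is positive semi-definite (using only that the coefficients $a_{p}$ are non-negative and the series converges on $\left[-r^{2},r^{2}\right]$), and then appeals to a standard abstract existence result for Gaussian processes with a prescribed positive semi-definite covariance kernel. You instead give the explicit $p$-spin polynomial chaos construction: for each $p$ a Gaussian tensor of rank $p$, suitably normalized so that the $p$-th term has covariance $Na_{p}\left\langle \sigma,\sigma'\right\rangle ^{p}$, with independence across $p$ guaranteeing that variances add; the uniform bound $\sum_{p}a_{p}\left\|\sigma\right\|^{2p}\le\xi\left(r^{2}\right)<\infty$ then gives $L^{2}$-convergence of the series at each fixed $\sigma$, and the covariance passes to the limit by continuity of the inner product in $L^{2}$. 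Both arguments are sound. What each buys: the paper's route is shorter and handles the positive-definiteness in a single citation, whereas your construction is self-contained (it exhibits positive semi-definiteness directly by producing the process, with no need for Schoenberg) and is the representation actually used throughout the spin-glass literature. You are also right to note that the lemma only asserts existence of the process as a consistent Gaussian family and that differentiability is a separate regularity statement, which the paper proves later (Lemma \ref{lem: grad existance}) via the entropy/chaining machinery rather than by a termwise Kolmogorov-type criterion.
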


\begin{proof}
If $\xi\left(r^{2}\right)<\infty$ then $\xi\left(q\right)<\infty$
for all $q\in\left[-r^{2},r^{2}\right]$, so $\xi\left(\left\langle \sigma,\sigma'\right\rangle \right)$
is well-defined for all $\sigma,\sigma'\in B_{N}\left(r\right)$.
By Schoenberg's theorem the function $\left(\sigma,\sigma'\right)\to N\xi\left(\left\langle \sigma,\sigma'\right\rangle \right)$
is positive semi-definite \cite{schoenbergPositiveDefiniteFunctions1942a},
so by standard existence results (e.g. \cite[Chapter 1, Proposition 3.7]{RevuzYor-ContMartAndBM})
one can construct the Gaussian process $H_{N}$.
\end{proof}
In the rest of the appendix we will show that $H_{N}$ is also a smooth
function on $B_{N}^{\circ}\left(r\right)$, and provide useful regularity
estimates. The first one is the following. Let $\|\cdot\|_{L^{2}}$
denote the $L^{2}$ norm on the linear space of random variables.
\begin{lem}
For any $r,\xi,N,H_{N}$ as in Lemma \ref{lem: existance-1} and $0<s\le r$
we have 
\begin{equation}
\xi\left(\|\sigma\|^{2}\right)+\xi\left(\|\sigma'\|^{2}\right)-2\xi\left(\left\langle \sigma,\sigma'\right\rangle \right)\le8s\xi'\left(s^{2}\right)\|\sigma-\sigma'\|,\label{eq: var distance inequality-1}
\end{equation}
and
\begin{equation}
\|H_{N}\left(\sigma\right)-H_{N}\left(\sigma'\right)\|_{L^{2}}^{2}\le8s\xi'\left(s^{2}\right)N\|\sigma-\sigma'\|,\label{eq: L2 distance ineq-1}
\end{equation}
for $\sigma,\sigma'\in B_{N}\left(s\right)$.
\end{lem}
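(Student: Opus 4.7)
The plan is to note first that the second inequality is an immediate consequence of the first: by the covariance formula (\ref{eq: covar appendix-2}),
\[
\|H_{N}(\sigma)-H_{N}(\sigma')\|_{L^{2}}^{2}=N\bigl(\xi(\|\sigma\|^{2})+\xi(\|\sigma'\|^{2})-2\xi(\langle\sigma,\sigma'\rangle)\bigr),
\]
so (\ref{eq: L2 distance ineq-1}) follows from (\ref{eq: var distance inequality-1}) by multiplying through by $N$. Thus the whole task reduces to proving (\ref{eq: var distance inequality-1}).

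For that, the key structural observation I would use is that because $\xi(x)=\sum_{p\ge0}a_{p}x^{p}$ has non-negative coefficients, its derivative $\xi'(x)=\sum_{p\ge1}pa_{p}x^{p-1}$ also has non-negative coefficients. Hence for any $y$ with $|y|\le s^{2}$,
\[
|\xi'(y)|\le\sum_{p\ge1}pa_{p}s^{2(p-1)}=\xi'(s^{2}),
\]
which says that $\xi$ is $\xi'(s^{2})$-Lipschitz on the interval $[-s^{2},s^{2}]$. (The value $\xi'(s^2)$ is finite for $s\le r$ by standard power-series arguments from the hypothesis $\xi(r^{2})<\infty$.)

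Next, set $q=\|\sigma\|^{2}$, $q'=\|\sigma'\|^{2}$ and $u=\langle\sigma,\sigma'\rangle$. Then $q,q'\in[0,s^{2}]$ and, by Cauchy-Schwarz, $|u|\le\|\sigma\|\|\sigma'\|\le s^{2}$, so all three arguments lie in $[-s^{2},s^{2}]$. Moreover,
\[
|q-u|=|\langle\sigma,\sigma-\sigma'\rangle|\le\|\sigma\|\,\|\sigma-\sigma'\|\le s\|\sigma-\sigma'\|,
\]
and likewise $|q'-u|\le s\|\sigma-\sigma'\|$. Combining with the Lipschitz bound,
\[
\xi(q)+\xi(q')-2\xi(u)\le|\xi(q)-\xi(u)|+|\xi(q')-\xi(u)|\le\xi'(s^{2})\bigl(|q-u|+|q'-u|\bigr)\le2s\xi'(s^{2})\|\sigma-\sigma'\|,
\]
which is in fact stronger than the claimed constant $8s\xi'(s^{2})$ and hence yields (\ref{eq: var distance inequality-1}).

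There is no real obstacle in this argument; the only thing to be careful about is verifying that the Lipschitz constant $\xi'(s^{2})$ is finite and applies on a set containing $u$ (which may be negative), and that is handled by the non-negativity of the Taylor coefficients of $\xi$.
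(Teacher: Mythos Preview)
Your proof is correct and in fact yields the sharper constant $2$ in place of the paper's $8$. The paper argues differently: it sets $\Delta=\sigma'-\sigma$, defines the interpolation
\[
f(\lambda)=\xi(\|\sigma\|^{2})+\xi(\|\sigma+\lambda\Delta\|^{2})-2\xi(\langle\sigma,\sigma+\lambda\Delta\rangle),
\]
notes $f(0)=0$, writes $f(1)=\int_{0}^{1}f'(\lambda)\,d\lambda$, and bounds $|f'(\lambda)|$ pointwise by $8s\xi'(s^{2})\|\Delta\|$. Your route is more direct: you observe once that non-negativity of the Taylor coefficients makes $\xi$ globally $\xi'(s^{2})$-Lipschitz on $[-s^{2},s^{2}]$, then bound the distances $|q-u|$ and $|q'-u|$ by a single Cauchy--Schwarz each. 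This avoids differentiating the interpolation and loses less in the constants. The paper's interpolation, on the other hand, is a template that generalizes more mechanically to higher derivatives (as used later in the appendix).

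One small inaccuracy: your claim that $\xi'(s^{2})$ is finite for all $s\le r$ is not quite right at the endpoint $s=r$, since $\xi(r^{2})<\infty$ does not force $\xi'(r^{2})<\infty$. The paper handles this by a remark interpreting the right-hand side as $+\infty$ in that case, which renders the inequality vacuous; your argument is unaffected once you adopt the same convention.
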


\begin{rem}
When $s=r$ and $\xi'\left(r^{2}\right)=\infty$ we interpret the
RHS of (\ref{eq: var distance inequality-1}) and (\ref{eq: L2 distance ineq-1})
as $\infty$, so that the statements are vacuous. Below we use the
same interpretation in (\ref{eq: expected max}), (\ref{eq: bound on energy})
and (\ref{eq: L2 dist kth partial derivs-2}).
\end{rem}

\begin{proof}
Let $\Delta=\sigma'-\sigma$ and
\[
f\left(\lambda\right)=\xi\left(\|\sigma\|^{2}\right)+\xi\left(\|\sigma+\lambda\Delta\|^{2}\right)-2\xi\left(\left\langle \sigma,\sigma+\lambda\Delta\right\rangle \right),\lambda\in\left[0,1\right].
\]
We have $f\left(0\right)=0$ and
\[
\xi\left(\|\sigma\|^{2}\right)+\xi\left(\|\sigma'\|^{2}\right)-2\xi\left(\left\langle \sigma,\sigma'\right\rangle \right)=f\left(1\right)=\int_{0}^{1}f'\left(\lambda\right)d\lambda.
\]
Furthermore
\[
f'\left(\lambda\right)=\xi'\left(\|\sigma+\lambda\Delta\|^{2}\right)\left(2\left\langle \sigma,\Delta\right\rangle +2\lambda\|\Delta\|^{2}\right)-2\xi'\left(\left\langle \sigma,\sigma+\lambda\Delta\right\rangle \right)\left\langle \sigma,\Delta\right\rangle ,
\]
so that  
\[
\sup_{\lambda\in\left[0,1\right]}\left|f'\left(\lambda\right)\right|\le\xi'\left(s^{2}\right)\left(4\left|\left\langle \sigma,\Delta\right\rangle \right|+2\|\Delta\|^{2}\right).
\]
Since $4\left|\left\langle \sigma,\Delta\right\rangle \right|+2\|\Delta\|^{2}\le4\|\sigma\|\|\Delta\|+4s\|\Delta\|\le8s\|\Delta\|$
the claim (\ref{eq: var distance inequality-1}) follows.

The estimate (\ref{eq: L2 distance ineq-1}) is an immediate consequence
of (\ref{eq: var distance inequality-1}) and (\ref{eq: covar appendix-2}).
\end{proof}
We encapsulate some classical results on the regularity of Gaussian
processes in the following lemma.
\begin{lem}
\label{lem: general gaussian theory}There is a universal constant
$c$ such that the following holds. Let $N\ge1$, $s>0$ and let $T\subset B_{N}\left(s\right)$
be a set. For $a\in\left(0,\infty\right)$ assume that $X_{\sigma},\sigma\in T$,
is a centered Gaussian process such that
\begin{equation}
\|X_{\sigma}-X_{\sigma'}\|_{L^{2}}^{2}\le aN\|\sigma-\sigma'\|\text{ for all }\sigma,\sigma'\in T.\label{eq: l2 dist assump}
\end{equation}
Then $\left(X_{\sigma}\right)_{\sigma\in T}$ is almost surely continuous,
and
\begin{equation}
\mathbb{E}\left(\sup_{\sigma\in T}\left|X_{\sigma}\right|\right)\le cN\sqrt{sa},\label{eq: max of gaussian proc general}
\end{equation}
and
\begin{equation}
\mathbb{P}\left(\sup_{\sigma\in T}\left|X_{\sigma}\right|\ge u\right)\le e^{-\frac{u^{2}}{8\sup_{\sigma\in T}\mathbb{E}\left[X_{\sigma}^{2}\right]}}\text{\,for all }u\ge2c\sqrt{sa}.\label{eq: bound on tail}
\end{equation}
\end{lem}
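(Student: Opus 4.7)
The plan is to deploy the classical tools from the theory of Gaussian processes: Dudley's entropy bound to obtain the almost sure continuity and the expected supremum estimate, and the Borell-TIS concentration inequality for the tail estimate.

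First I would introduce the canonical pseudo-metric $d(\sigma,\sigma') = \|X_\sigma - X_{\sigma'}\|_{L^2}$. The hypothesis (\ref{eq: l2 dist assump}) immediately gives $d(\sigma,\sigma') \le \sqrt{aN\|\sigma-\sigma'\|}$, so every $\|\cdot\|$-ball of radius $\varepsilon^2/(aN)$ is contained in a $d$-ball of radius $\varepsilon$. Since $T \subset B_N(s)$, the standard volume estimate yields $N(T,\|\cdot\|,\rho) \le (3s/\rho)^N$ for $\rho \le s$, and therefore
\[
N(T, d, \varepsilon) \le \left(\frac{3saN}{\varepsilon^2}\right)^N \qquad \text{for } 0 < \varepsilon \le \sqrt{saN},
\]
while the $d$-diameter of $T$ is at most $\sqrt{2saN}$.

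Next, Dudley's theorem ensures that $(X_\sigma)_{\sigma\in T}$ admits an almost surely continuous modification and that, for any fixed $\sigma_0 \in T$,
\[
\mathbb{E}\Big[\sup_{\sigma\in T}(X_\sigma - X_{\sigma_0})\Big] \le C \int_0^{\sqrt{2saN}} \sqrt{\log N(T,d,\varepsilon)}\, d\varepsilon.
\]
The substitution $\varepsilon = u\sqrt{saN}$ reduces the integral to $CN\sqrt{sa}\int_0^{\sqrt{2}} \sqrt{\log(3/u^2)}\, du$, the remaining factor being an explicit universal constant. Using $\sup_\sigma |X_\sigma - X_{\sigma_0}| \le \sup_\sigma (X_\sigma - X_{\sigma_0}) + \sup_\sigma (X_{\sigma_0} - X_\sigma)$ together with the symmetry $X \stackrel{d}{=} -X$, and absorbing the contribution of $\mathbb{E}|X_{\sigma_0}| \le \sqrt{\mathbb{E}X_{\sigma_0}^2}$ into the constant, produces (\ref{eq: max of gaussian proc general}).

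Finally, the Borell-TIS inequality applied to the continuous centered Gaussian field $(X_\sigma)_{\sigma\in T}$ gives
\[
\mathbb{P}\Big(\sup_{\sigma\in T} X_\sigma - \mathbb{E}\sup_{\sigma\in T} X_\sigma \ge t\Big) \le e^{-t^2/(2\sigma_T^2)}, \qquad \sigma_T^2 = \sup_{\sigma\in T}\mathbb{E}[X_\sigma^2].
\]
When $u$ exceeds twice the expected supremum bound from the previous paragraph, setting $t = u - \mathbb{E}\sup_\sigma X_\sigma \ge u/2$ yields an upper bound of order $e^{-u^2/(8\sigma_T^2)}$; repeating with $-X_\sigma$ and taking a union bound then recovers (\ref{eq: bound on tail}) with the stated constant. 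The only genuinely delicate step is the bookkeeping when passing between the signed supremum (which Dudley controls directly) and the absolute supremum appearing in the statement, for which one uses the symmetry of centered Gaussian laws; the rest of the argument is a routine evaluation of explicit integrals.
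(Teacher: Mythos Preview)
Your proposal is correct and follows essentially the same route as the paper: Dudley's entropy bound (via a covering-number estimate for $B_N(s)$ transferred through the increment hypothesis) yields continuity and the expected-supremum bound, and Borell--TIS gives the tail estimate. The paper's proof is terser but invokes exactly these two ingredients (citing Aza\"is--Wschebor, Theorems~1.18, 2.10 and 2.8).
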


\begin{proof}
We use Dudley's entropy bound \cite[Theorem 1.18, Theroem 2.10]{azaisLevelSetsExtrema2009}.
Consider the distance $d\left(\sigma,\sigma'\right)=\|X_{\sigma}-X_{\sigma'}\|_{L^{2}}$
on $T$. From (\ref{eq: l2 dist assump}) we have $d\left(\sigma,\sigma'\right)\le\sqrt{a}N^{1/4}\left|\sigma-\sigma'\right|^{1/2}$.
Thus if one covers $B_{N}\left(s\right)$ with Euclidean balls of
$\left|\cdot\right|$-radius $(\varepsilon/(c_{1}N^{1/4}))^{2}$ then
balls of $d$-radius $\varepsilon$ centered at the same points also
cover $B_{N}\left(s\right)\supset T$. We have 
\[
\frac{{\rm Vol}\left(\left\{ \sigma:\left|\sigma\right|\le s\sqrt{N}\right\} \right)}{{\rm Vol}\left(\left\{ \sigma:\left|\sigma\right|\le\frac{1}{2}\left(\varepsilon/\left(\sqrt{a}N^{1/4}\right)\right)^{2}\right\} \right)}=\left(\frac{s\sqrt{N}}{\frac{1}{2}\left(\varepsilon/\left(\sqrt{a}N^{1/4}\right)\right)^{2}}\right)^{N}=\left(c_{1}N/\varepsilon^{2}\right)^{N},
\]
with $c_{1}=2sa$. Thus $B_{N}\left(s\right)$ can be covered with
at most at most $N\left(\varepsilon\right)=\max((c_{1}N/\varepsilon^{2})^{N},1)$
Euclidean balls of $\left|\cdot\right|$-radius $(\varepsilon/(c_{1}N^{1/4}))^{2}$.
We have
\[
\int_{0}^{\infty}\sqrt{\log N\left(\varepsilon\right)}=\sqrt{2N}\int_{0}^{\sqrt{c_{1}N}}\sqrt{\log\frac{\sqrt{c_{1}N}}{\varepsilon}}d\varepsilon=N\sqrt{2c_{1}}\int_{0}^{1}\sqrt{\log\frac{1}{\tilde{\varepsilon}}}d\tilde{\varepsilon}.
\]
This is finite, thus \cite[Theorem 1.18]{azaisLevelSetsExtrema2009}
implies that $X_{\sigma},\sigma\in T$, is continuous. Also \cite[Theroem 2.10]{azaisLevelSetsExtrema2009}
implies (\ref{eq: max of gaussian proc general}). Lastly the Borell-TIS
inequality \cite[Theorem 2.8]{azaisLevelSetsExtrema2009} implies
(\ref{eq: bound on tail}).
\end{proof}
Applying this to $H_{N}$ yields the following.
\begin{lem}
\label{lem: max and cond}There is a universal constant $c$, such
that for any $r,\xi,N,H_{N}$ as in Lemma \ref{lem: existance-1}
we have that $H_{N}\left(\sigma\right)$ is continuous almost surely
for $\sigma\in B_{N}^{\circ}\left(r\right)$ (and if $\xi'\left(r^{2}\right)<\infty$
also for $\sigma\in B_{N}\left(r\right)$) and for $0\le s\le r$
it holds that
\begin{equation}
\mathbb{E}\left[{\displaystyle \sup_{\sigma\in B_{N}\left(s\right)}}\left|H_{N}\left(\sigma\right)\right|\right]\le cs\sqrt{\xi'\left(s^{2}\right)}N,\label{eq: expected max}
\end{equation}
and
\begin{equation}
\mathbb{P}\left(\sup_{\sigma\in B_{N}\left(s\right)}\left|H_{N}\left(\sigma\right)\right|\ge u\right)\le e^{-\frac{u^{2}}{8\xi\left(s^{2}\right)N}}\text{ for all }u\ge2cs\sqrt{\xi'\left(s^{2}\right)}N.\label{eq: bound on energy}
\end{equation}
\end{lem}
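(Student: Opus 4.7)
The plan is to deduce Lemma \ref{lem: max and cond} by combining the $L^2$-Lipschitz-type estimate \eqref{eq: L2 distance ineq-1} with the abstract Gaussian process result Lemma \ref{lem: general gaussian theory}. The estimate \eqref{eq: L2 distance ineq-1} says exactly that on $T=B_N(s)$ the process $H_N$ satisfies the hypothesis \eqref{eq: l2 dist assump} with $a=8s\xi'(s^2)$, which is finite whenever $s<r$ (and for $s=r$ whenever $\xi'(r^2)<\infty$, since $\xi'$ is a power series with non-negative coefficients and radius of convergence at least $r^2$). All three conclusions will then drop out of Lemma \ref{lem: general gaussian theory} more or less immediately.

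For the almost sure continuity statement, I would first fix $s<r$, apply Lemma \ref{lem: general gaussian theory} to $T=B_N(s)$ with the value of $a$ above, and obtain continuity of $H_N$ on $B_N(s)$ on an event of full probability. Taking a countable sequence $s_n\uparrow r$ and intersecting the corresponding full-probability events then yields continuity on $B_N^{\circ}(r)=\bigcup_n B_N(s_n)$. In the special case $\xi'(r^2)<\infty$ the same argument applies directly with $T=B_N(r)$, giving continuity on the closed ball.

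For the two quantitative bounds I would take $T=B_N(s)$ with $0\le s\le r$ (so the case $s=r$ again requires $\xi'(r^2)<\infty$, otherwise the right-hand sides are $\infty$ and there is nothing to prove). Inequality \eqref{eq: max of gaussian proc general} then gives
\begin{equation*}
\mathbb{E}\Big[\sup_{\sigma\in B_N(s)}|H_N(\sigma)|\Big]\le cN\sqrt{s\cdot 8s\xi'(s^2)}=c'sN\sqrt{\xi'(s^2)},
\end{equation*}
which is \eqref{eq: expected max} after relabeling constants. For \eqref{eq: bound on energy} I would use \eqref{eq: bound on tail}, noting that by \eqref{eq: covar appendix-2} and non-negativity of the coefficients of $\xi$ one has $\sup_{\sigma\in B_N(s)}\mathbb{E}[H_N(\sigma)^2]=\sup_{\sigma\in B_N(s)}N\xi(\|\sigma\|^2)\le N\xi(s^2)$, so that the tail bound gives $\exp(-u^2/(8N\xi(s^2)))$ for $u\ge 2c\sqrt{s\cdot 8s\xi'(s^2)}$, which matches \eqref{eq: bound on energy}.

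There is no real obstacle here: the only thing to watch is the boundary case $s=r$ when $\xi'(r^2)=\infty$, but the remark following the statement of \eqref{eq: var distance inequality-1} prescribes the convention that the bounds be read as vacuous in that regime, so no separate argument is needed. The constants may need to be absorbed into a single universal $c$, but this is purely bookkeeping.
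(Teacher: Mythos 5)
Your proposal is correct and follows essentially the same route as the paper's proof: apply the $L^2$-increment bound \eqref{eq: L2 distance ineq-1} to verify the hypothesis of Lemma~\ref{lem: general gaussian theory} with $a=8s\xi'(s^2)$, read off \eqref{eq: expected max} and \eqref{eq: bound on energy} from \eqref{eq: max of gaussian proc general} and \eqref{eq: bound on tail} (noting $\sup_{\sigma\in B_N(s)}\mathbb{E}[H_N(\sigma)^2]=N\xi(s^2)$), and obtain continuity on $B_N^\circ(r)$ by exhausting it with balls $B_N(s)$, $s<r$. You are in fact a bit more explicit than the paper about the exhaustion step and the variance computation, but the argument is the same.
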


\begin{proof}
Provided $\xi'\left(s^{2}\right)<1$ the continuity of $H_{N}\left(\sigma\right),\sigma\in B_{N}\left(s\right)$,
(\ref{eq: expected max}) and (\ref{eq: bound on energy}) follow
by applying Lemma \ref{lem: general gaussian theory} with $T=B_{N}\left(s\right)$,
$X_{\sigma}=H_{N}\left(\sigma\right)$ and $a=8s\xi'\left(s^{2}\right)$,
since we have the bound (\ref{eq: L2 distance ineq-1}). If $\xi'\left(r^{2}\right)<\infty$
then with $s=r$ continuity on $B_{N}\left(r\right)$ follows. If
$\xi'\left(r^{2}\right)=\infty$ then continuity on $B_{N}\left(s\right)$
for all $s<r$ (note $\xi'\left(s^{2}\right)<\infty$ for $s<r$)
implies continuity on $B_{N}^{\circ}\left(r\right)$.
\end{proof}
We now turn to the derivatives of $H_{N}$. For a multi-index $\alpha\in\left\{ 1,\ldots,N\right\} ^{l}$
let $\partial_{\alpha}$ denote the corresponding partial derivative,
and write $\partial_{\alpha}^{x}$ for the partial derivative with
respect to a variable $x$. Let $e_{1},\ldots,e_{N}$ denote the
standard basis vectors of $\mathbb{R}^{N}$. The next lemma will be
applied with $C$ the covariance of a Gaussian process to prove differentiability.
\begin{lem}
\label{lem: taylor lemma}If $0<s<r$ and $C:B_{N}^{\circ}\left(r\right)\times B_{N}^{\circ}\left(r\right)\to\mathbb{R}$
has derivatives up to fourth order that are Lipschitz functions and
$i,j\in\left\{ 1,\ldots,N\right\} $, then there exists a Lipschitz
function $R:B_{N}^{\circ}\left(s\right)\times B_{N}^{\circ}\left(s\right)\times(0,r-s)^{2}\to\mathbb{R}$
that satisfies $R\left(\sigma,\sigma',\eta_{1},\eta_{2}\right)=O\left(\left|\eta_{1}\right|+\left|\eta_{2}\right|\right)$
such that for all $\sigma,\sigma'\in B_{N}^{\circ}\left(s\right)$
and $0<\eta,\eta'\le r-s$ 
\begin{equation}
\frac{1}{\eta_{1}\eta_{2}}\sum_{s_{1},s_{2}\in\left\{ 0,1\right\} }\left(-1\right)^{s_{1}+s_{2}}C\left(\sigma+s_{1}\eta_{1}e_{i},\sigma+s_{2}\eta_{2}e_{j}\right)=\partial_{i}^{\sigma}\partial_{j}^{\sigma'}C\left(\sigma,\sigma'\right)+R\left(\sigma,\sigma',\eta,\eta'\right),\label{eq: taylor lemma}
\end{equation}
where the constant in the $O$-term, the Lipschitz constant of $R$
and $R$ itself depend on $C,i,j$.
\end{lem}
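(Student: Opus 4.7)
The plan is to realize the signed second-order difference in \eqref{eq: taylor lemma} as a double integral of the mixed partial derivative of $C$ and then extract the remainder by comparing the integrand with its value at the base point. I will read the left-hand side with the two base points being $\sigma$ and $\sigma'$ respectively; the printed formula has $\sigma$ in both slots, but since $\sigma'$ must appear on the left for the statement to be meaningful (the remainder $R$ is advertised as depending on $\sigma'$), I treat this as a typographical slip.

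\textbf{Step 1: Reduction to an integral average.} Applying the fundamental theorem of calculus once in the first argument over $[0,\eta_1]$ in direction $e_i$, and once in the second over $[0,\eta_2]$ in direction $e_j$, the alternating sum equals
\begin{equation*}
\sum_{s_1,s_2\in\{0,1\}}(-1)^{s_1+s_2}C\bigl(\sigma+s_1\eta_1 e_i,\,\sigma'+s_2\eta_2 e_j\bigr)=\int_0^{\eta_1}\!\!\int_0^{\eta_2}\partial_i^{\sigma}\partial_j^{\sigma'}C(\sigma+te_i,\sigma'+ue_j)\,du\,dt,
\end{equation*}
so the left-hand side of \eqref{eq: taylor lemma} is exactly the average of $\partial_i^{\sigma}\partial_j^{\sigma'}C$ over the rectangle $[0,\eta_1]\times[0,\eta_2]$ based at $(\sigma,\sigma')$.

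\textbf{Step 2: Defining $R$ and bounding its size.} I would set
\begin{equation*}
R(\sigma,\sigma',\eta_1,\eta_2):=\frac{1}{\eta_1\eta_2}\int_0^{\eta_1}\!\!\int_0^{\eta_2}\!\Bigl[\partial_i^{\sigma}\partial_j^{\sigma'}C(\sigma+te_i,\sigma'+ue_j)-\partial_i^{\sigma}\partial_j^{\sigma'}C(\sigma,\sigma')\Bigr]\,du\,dt,
\end{equation*}
so that \eqref{eq: taylor lemma} becomes an identity by construction. Changing variables to $t=\eta_1\tilde t$, $u=\eta_2\tilde u$ recasts this as an integral over the unit square with no $1/(\eta_1\eta_2)$ prefactor. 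Since the fourth-order derivatives of $C$ are Lipschitz on $B_N^\circ(r)\times B_N^\circ(r)$, all third-order derivatives of $C$ are bounded on the compactly contained set $B_N^\circ(s)\times B_N^\circ(s)$. The mean value inequality then bounds the integrand pointwise by a constant times $\eta_1\tilde t+\eta_2\tilde u$, and integrating gives $R=O(|\eta_1|+|\eta_2|)$ uniformly on the declared domain.

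\textbf{Step 3: Lipschitz regularity of $R$.} With the unit-square parametrization
\begin{equation*}
R=\int_0^1\!\!\int_0^1\!\Bigl[\partial_i^{\sigma}\partial_j^{\sigma'}C(\sigma+\eta_1\tilde t e_i,\sigma'+\eta_2\tilde u e_j)-\partial_i^{\sigma}\partial_j^{\sigma'}C(\sigma,\sigma')\Bigr]d\tilde t\,d\tilde u,
\end{equation*}
I can differentiate under the integral. Each partial $\partial_{\sigma_k}$ or $\partial_{\sigma'_k}$ brings down a third-order derivative of $C$, which is bounded. Each partial in $\eta_1$ (resp.\ $\eta_2$) brings down $\tilde t$ (resp.\ $\tilde u$) times a third-order derivative of $C$, again uniformly bounded on the domain. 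Hence $R$ has uniformly bounded partial derivatives and is Lipschitz on $B_N^\circ(s)^2\times(0,r-s)^2$.

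\textbf{Main obstacle.} The only delicate step is establishing that $R$ stays Lipschitz up to $\eta_1,\eta_2=0$, where the naive formula has a $1/(\eta_1\eta_2)$ singularity. The rescaling in Step 3 removes that prefactor and converts the control into a statement about boundedness of third-order derivatives of $C$ — which is precisely what the hypothesis (fourth derivatives Lipschitz, hence third derivatives $C^{0,1}$) gives — so the singularity is harmless. Everything else reduces to routine application of the fundamental theorem of calculus and differentiation under the integral sign.
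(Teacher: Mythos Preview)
Your argument is correct. Both you and the paper arrive at the same identity, but the routes differ in a way worth noting. The paper expands each difference quotient via Taylor's theorem with second-order integral remainder, so its $R$ is a sum over $(g_1,g_2)\in\{0,1\}^2\setminus\{(0,0)\}$ of terms $\eta_1^{g_1}\eta_2^{g_2}$ times integrals of $(\partial_i^{\sigma})^{1+g_1}(\partial_j^{\sigma'})^{1+g_2}C$; controlling these uses derivatives of $C$ up to fourth order, and the Lipschitz property of $R$ then comes directly from the hypothesis that the fourth-order derivatives are Lipschitz. You instead apply the fundamental theorem of calculus twice to get the exact double-integral representation of the second difference, and define $R$ as the discrepancy between the averaged integrand and its base value. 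Your remainder therefore involves only second-order derivatives of $C$, and the $O(|\eta_1|+|\eta_2|)$ bound together with the Lipschitz property follow from boundedness of the \emph{third}-order derivatives alone. So your argument is both shorter and uses one fewer order of differentiability than the stated hypothesis actually provides; the paper's route is more explicit about the structure of the remainder but pays for it by needing the full strength of the assumption. Your remark about the missing $\sigma'$ on the left-hand side of \eqref{eq: taylor lemma} is also correct.
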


\begin{proof}
The LHS can be written as
\[
\frac{1}{\eta_{2}}\sum_{s_{2}\in\left\{ 0,1\right\} }\left(-1\right)^{s_{2}}\frac{C\left(\sigma+\eta_{1}e_{i},\sigma+s_{2}\eta_{2}e_{j}\right)-C\left(\sigma,\sigma+s_{2}\eta_{2}e_{j}\right)}{\eta_{1}}.
\]
By Taylor's theorem with integral remainder in the $\sigma$ variable
this equals
\[
\begin{array}{l}
{\displaystyle \frac{1}{\eta_{2}}\sum_{s_{2}\in\left\{ 0,1\right\} }\left(-1\right)^{s_{2}}\left(\partial_{i}^{\sigma}C\left(\sigma,x+s_{2}\eta_{2}e_{j}\right)+\eta_{1}\int_{0}^{1}\left(1-t\right)\left(\partial_{i}^{\sigma}\right)^{2}C\left(\sigma+t_{1}\eta_{1}e_{i},\sigma'+s_{2}\eta_{2}e_{j}\right)dt_{1}\right)}\\
=\frac{1}{\eta_{2}}\sum_{s_{2}\in\left\{ 0,1\right\} }\left(-1\right)^{s_{2}}\sum_{g_{1}\in\left\{ 0,1\right\} }\eta_{1}^{g_{1}}\int_{0}^{1}\left(\partial_{i}^{\sigma}\right)^{1+g_{1}}C\left(\sigma+t_{1}\eta_{1}e_{i},\sigma'+s_{2}\eta_{2}e_{j}\right)dt_{1}.
\end{array}
\]
Next applying the same theorem in $\sigma'$ this equals
\[
\sum_{g_{1},g_{2}\in\left\{ 0,1\right\} }\eta_{1}^{g_{1}}\eta_{2}^{g_{2}}\int_{0}^{1}\int_{0}^{1}(\partial_{j}^{\sigma'})^{1+g_{2}}(\partial_{i}^{\sigma})^{1+g_{1}}\left(1-t_{1}\right)\left(1-t_{2}\right)C\left(\sigma+t_{1}\eta_{1}e_{i},\sigma'+t_{2}\eta_{2}e_{j}\right)dt_{2}dt_{2}.
\]
The summand when $g_{1}=g_{2}=0$ is $\partial_{i}^{\sigma}\partial_{j}^{\sigma'}C\left(\sigma,\sigma'\right)$,
and the other summands are $O\left(\left|\eta_{1}\right|+\left|\eta_{2}\right|\right)$
and Lipschitz in $\eta_{1},\eta_{2},\sigma,\sigma'$ by the assumption
on $C$.
\end{proof}

With this lemma we can prove the existence of the derivatives of $H_{N}$.
\begin{lem}
\label{lem: grad existance}Let $r,\xi,N,H_{N}$ be as in Lemma \ref{lem: existance-1}.
Then $H_{N}$ is almost surely smooth on $B_{N}^{o}\left(r\right)$
and $\left(\partial_{\alpha}H_{N}\left(\sigma\right)\right)_{\sigma\in B_{N}^{o}\left(r\right),\alpha\in\cup_{l=0}^{\infty}\left\{ 1,\ldots,N\right\} ^{l}}$
is a centered Gaussian process with covariance
\begin{equation}
\mathbb{E}\left[\partial_{\alpha}H_{N}\left(\sigma\right)\partial_{\alpha'}H_{N}\left(\sigma'\right)\right]=N\partial_{\alpha}^{\sigma}\partial_{\alpha'}^{\sigma'}\xi\left(\left\langle \sigma,\sigma'\right\rangle \right).\label{eq: covar of as partial deriv}
\end{equation}
\end{lem}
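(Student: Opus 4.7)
The plan is to construct the derivative processes as $L^2$ limits of finite differences of $H_N$, check their covariances using Lemma \ref{lem: taylor lemma}, and upgrade from $L^2$ to pathwise smoothness via the Gaussian regularity estimates already at our disposal.

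Fix any $0<s<r$ and set $C(\sigma,\sigma')=N\xi(\langle\sigma,\sigma'\rangle)$. Since $\xi$ is a power series with $\xi(r^2)<\infty$ and non-negative coefficients, $C$ is $C^\infty$ on $B_N^\circ(r)\times B_N^\circ(r)$ and all its derivatives are Lipschitz on $B_N(s)\times B_N(s)$, so Lemma \ref{lem: taylor lemma} applies. First, for each $i\in\{1,\ldots,N\}$ and small $\eta>0$ I introduce the finite differences $\Delta_i^\eta H_N(\sigma):=\eta^{-1}(H_N(\sigma+\eta e_i)-H_N(\sigma))$. Lemma \ref{lem: taylor lemma} applied to $C$ gives
\[
\mathbb{E}\!\left[\Delta_i^{\eta_1}H_N(\sigma)\,\Delta_j^{\eta_2}H_N(\sigma')\right]
\;=\;N\,\partial_i^{\sigma}\partial_j^{\sigma'}\xi(\langle\sigma,\sigma'\rangle)+R(\sigma,\sigma',\eta_1,\eta_2),
\]
with $R=O(|\eta_1|+|\eta_2|)$ uniformly in $\sigma,\sigma'\in B_N(s)$. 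Hence $\Delta_i^\eta H_N(\sigma)$ is Cauchy in $L^2$ for each $\sigma$, and the $L^2$-limits $Y_i(\sigma)$ exist; being $L^2$-limits of centered Gaussians they are jointly Gaussian and centered, with the covariance claimed in \eqref{eq: covar of as partial deriv}.

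Next, I upgrade $Y_i$ to a continuous process. Its covariance $(\sigma,\sigma')\mapsto N\partial_i^\sigma\partial_i^{\sigma'}\xi(\langle\sigma,\sigma'\rangle)$ is $C^\infty$ on $B_N(s)^2$, so a second-order Taylor expansion yields
\[
\|Y_i(\sigma)-Y_i(\sigma')\|_{L^2}^{2}\le C_s N|\sigma-\sigma'|^{2}\qquad\text{for }\sigma,\sigma'\in B_N(s),
\]
and Lemma \ref{lem: general gaussian theory} produces a continuous modification $\tilde Y_i$ on $B_N(s)$. To check that $\tilde Y_i$ genuinely equals $\partial_i H_N$ pathwise, I fix $\sigma\in B_N(s)$ and $h>0$ small, write
\[
H_N(\sigma+he_i)-H_N(\sigma)\;=\;\int_0^h\tilde Y_i(\sigma+te_i)\,dt\qquad\text{in }L^2,
\]
by combining $L^2$-convergence of the finite differences with Fubini, and then use that both sides are a.s. jointly continuous in $(\sigma,h)$ (Lemma \ref{lem: max and cond} for the LHS and the continuity of $\tilde Y_i$ just established) to conclude that the identity holds almost surely for all $(\sigma,h)$ simultaneously. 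Differentiating in $h$ shows that a continuous modification of $H_N$ is $C^1$ with $\partial_i H_N=\tilde Y_i$.

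Finally I iterate. Each $\partial_\alpha^\sigma\partial_\alpha^{\sigma'}C$ is still $C^\infty$ on $B_N^\circ(r)^2$, so the same finite-difference/Fubini argument applied to $\partial_\beta H_N$ in place of $H_N$ produces pathwise higher derivatives; an induction on the order of $\alpha$ yields $C^k$ modifications for every $k$ and hence a $C^\infty$ modification on $B_N^\circ(r)$, and the joint Gaussian covariance formula \eqref{eq: covar of as partial deriv} is a direct consequence of the covariance computation at each inductive step. The main obstacle, and the only non-mechanical part, is the third step: promoting the $L^2$-identity between $H_N(\sigma+he_i)-H_N(\sigma)$ and $\int_0^h \tilde Y_i\,dt$ to an almost sure identity that holds simultaneously in $(\sigma,h)$ — this is where the continuity of both sides, proven via Lemmas \ref{lem: max and cond} and \ref{lem: general gaussian theory}, is decisive, and it is what bridges the $L^2$ Gaussian construction of the derivative to the pathwise differentiability claim.
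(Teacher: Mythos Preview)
Your argument is correct, and the overall architecture (induction on the order, $L^2$ limits of finite differences via Lemma~\ref{lem: taylor lemma}, then Dudley-type regularity to go from $L^2$ to pathwise) matches the paper's. The difference is in how you upgrade the $L^2$ derivative to a pathwise one. The paper enlarges the index set: it considers the process $g_i(\sigma,\eta)$ equal to the finite difference for $\eta\ne0$ and to the $L^2$ limit at $\eta=0$, uses the Lipschitz-in-$(\sigma,\sigma',\eta_1,\eta_2)$ part of Lemma~\ref{lem: taylor lemma} to bound its $L^2$ increments jointly in $(\sigma,\eta)$, and then applies Lemma~\ref{lem: general gaussian theory} once on $B_N^\circ(s)\times(-(r-s),r-s)$. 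Continuity of $g_i$ at $\eta=0$ then \emph{is} pathwise convergence of the difference quotients, so the partial derivative exists and is continuous without any further argument. Your route instead takes a continuous modification of the limit process in $\sigma$ alone, then uses the integral identity $H_N(\sigma+he_i)-H_N(\sigma)=\int_0^h\tilde Y_i(\sigma+te_i)\,dt$ (verified in $L^2$ by a covariance computation and Fubini, then upgraded via joint continuity) together with the fundamental theorem of calculus. Both are valid; the paper's trick of folding $\eta$ into the Gaussian index set is a little shorter and fully exploits the Lipschitz remainder in Lemma~\ref{lem: taylor lemma}, while your approach is more modular and only needs the $O(|\eta_1|+|\eta_2|)$ bound from that lemma.
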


\begin{proof}
Since $\xi$ is a convergent power series in $B_{N}^{\circ}\left(r\right)$,
the function $\left(\sigma,\sigma'\right)\to\xi\left(\left\langle \sigma,\sigma'\right\rangle \right)$
is infinitely differentiable. Consider the statement
\begin{equation}
\begin{array}{c}
\partial_{\alpha}H_{N}\left(\sigma\right)\text{ exists and is continuous for all }\sigma\in B_{N}^{\circ}\left(r\right),\left|\alpha\right|\le k,\\
\text{and is a centered Gaussian process with covariance }\eqref{eq: covar of as partial deriv}.
\end{array}\label{eq: induc step}
\end{equation}
If this holds for all $k$ then it implies the claim of the lemma,
since existence and continuity of partial derivatives implies differentiability.

To prove (\ref{eq: induc step}) we use induction on $k$. The case
$k=0$ follows since $H_{N}$ is continuous on $B_{N}^{\circ}\left(r\right)$
by Lemma \ref{lem: max and cond}. Assume (\ref{eq: induc step})
holds for $k\le l$. Fix an $s\in\left(0,r\right)$. Consider for
any $\alpha$ with $\left|\alpha\right|=l$ and any $i=1,\ldots,N,0<\eta<r-s,\sigma\in B_{N}^{o}\left(s\right)$
\[
\Delta_{\alpha,i}\left(\sigma,\eta\right)=\frac{\partial_{\alpha}H_{N}\left(\sigma+\eta e_{i}\right)-\partial_{\alpha}H_{N}\left(\sigma\right)}{\eta}.
\]
Note that for $0<\eta,\eta'\le r-s,$ $\sigma,\sigma'\in B_{N}^{o}\left(s\right)$
and $\alpha,\alpha'$ with $\left|\alpha\right|,\left|\alpha'\right|\le l$
the covariance $N^{-1}\mathbb{E}\left[\Delta_{\alpha,i}\left(\sigma,\eta_{1}\right)\Delta_{\alpha',j}\left(\sigma',\eta_{2}\right)\right]$
equals the LHS of (\ref{eq: taylor lemma}) with $C\left(\sigma,\sigma'\right)=C_{\alpha,\alpha'}\left(\sigma,\sigma'\right)=\partial_{\alpha}^{\sigma}\partial_{\alpha'}^{\sigma'}\xi\left(\left\langle \sigma,\sigma'\right\rangle \right)$,
so by Lemma \ref{lem: taylor lemma}
\begin{equation}
\mathbb{E}\left[\Delta_{\alpha,i}\left(\sigma,\eta_{1}\right)\Delta_{\alpha',j}\left(\sigma',\eta_{2}\right)\right]=N\partial_{i}^{\sigma}\partial_{j}^{\sigma'}C\left(\sigma,\sigma'\right)+R\left(\sigma,\sigma',\eta,\eta'\right),\label{eq: covar of delta}
\end{equation}
where $R\left(\sigma,\sigma',\eta,\eta'\right)$ is $O\left(\left|\eta\right|+\left|\eta'\right|\right)$
and Lipschitz. This implies that $\|\Delta_{\alpha,i}\left(\sigma,\eta\right)-\Delta_{\alpha,i}\left(\sigma,\eta'\right)\|_{L^{2}}^{2}=O\left(\left|\eta\right|+\left|\eta'\right|\right)$.
Thus $\Delta_{\alpha,i}\left(\sigma,\eta\right)$ for $\eta\downarrow0$
is a $L^{2}$-Cauchy sequence, and there exists a random variable
$D^{i}\partial_{\alpha}H_{N}\left(\sigma\right)$ such that $\Delta_{\alpha,i}\left(\sigma,\eta\right)\to D^{i}\partial H_{N}\left(\sigma\right)$
in $L^{2}$, as $\eta\to0$. Also $D^{i}\partial H_{N}\left(\sigma\right)$
is a centered Gaussian since it is the limit of centered Gaussians,
and jointly Gaussian with $\partial_{\alpha}H_{N}\left(\sigma\right),\sigma\in B_{N}^{\circ}\left(s\right),0<\eta<r-s$.
Next define 
\[
g_{i}\left(\sigma,\eta\right)=\begin{cases}
\Delta_{\alpha,i}\left(\sigma,\eta\right) & \text{ if }\eta\ne0,\\
D^{i}\partial H_{N}\left(\sigma\right) & \text{ if }\eta=0.
\end{cases}
\]
Then $g_{i}\left(\sigma,\eta\right)$ is a Gaussian process on $T=B_{N}^{o}\left(s\right)\times\left(-\left(r-s\right),r-s\right)\subset B_{N+1}\left(\sqrt{s^{2}+r^{2}}\right)$.
By (\ref{eq: covar of delta}) and the Lipschitz property of $R$
\[
\|g_{i}\left(\sigma,\eta\right)-g_{i}\left(\sigma',\eta'\right)\|_{L^{2}}^{2}\le c\sqrt{\left|\sigma-\sigma'\right|^{2}+\left|\eta-\eta'\right|^{2}},
\]
first for $\eta,\eta\ne0$ and by the $L^{2}$ convergence to $D^{i}\partial H_{N}\left(\sigma\right)$
resp. $D^{i}\partial H_{N}\left(\sigma'\right)$ for all $\eta,\eta'$.
Therefore by Lemma \ref{lem: general gaussian theory} the process
$g_{i}\left(\sigma,\eta\right)$ is almost surely continuous. Thus
the limit $\lim_{\eta\downarrow0}\Delta_{\alpha,i}\left(\sigma,\eta\right)$
exists and equals $D^{i}\partial H_{N}\left(\sigma\right)$ for all
$\sigma$, almost surely. Then for all $i$ and $\sigma\in B_{N}^{\circ}\left(s\right)$
the derivative $\partial_{i}\partial_{\alpha}H_{N}\left(\sigma\right)=g_{i}\left(\sigma,0\right)$
exists and is continuous. Since this holds for all $s<r$ it also
holds for $s=r$, proving all of (\ref{eq: induc step}) for $l=k+1$
except for the formula for the covariance. The latter then follows
since for any $\sigma,\sigma'\in B_{N}^{o}\left(r\right),i,j=1,\ldots,N$,
$\alpha,\alpha'$ with $\left|\alpha\right|=\left|\alpha'\right|\le l$
and $\eta,\eta'$ small enough we have 
\[
\mathbb{E}\left[\Delta_{\alpha,i}\left(\sigma,\eta\right)\Delta_{\alpha',j}\left(\sigma',\eta\right)\right]\overset{\eqref{eq: covar of delta}}{\to}N\partial_{i}^{\sigma}\partial_{j}^{\sigma'}C\left(\left\langle \sigma,\sigma'\right\rangle \right)=\partial_{i}^{\sigma}\partial_{\alpha}^{\sigma}\partial_{j}^{\sigma'}\partial_{\alpha'}^{\sigma'}\xi\left(\left\langle \sigma,\sigma'\right\rangle \right)\text{ as }\eta\to0.
\]
\end{proof}
Lastly we give a basic regularity estimate for the derivatives of
$H_{N}$. The \emph{spectral norm }of $\nabla^{k}H_{N}\left(\sigma\right)=\left(\partial_{i_{1}\ldots i_{k}}H_{N}\left(\sigma\right)\right)_{i_{1},\ldots,i_{n}=1,\ldots,N}$
viewed as a tensor is $\sup_{v:\|v\|=1}\left|\nabla^{k}H_{N}\left(\sigma\right)v^{\otimes k}\right|$
where
\[
\nabla^{k}H_{N}\left(\sigma\right)v^{\otimes k}=\sum_{i_{1},\ldots,i_{p}=1}^{N}\partial_{i_{1}\ldots i_{p}}H_{N}\left(\sigma\right)v_{i_{1}}\ldots v_{i_{p}}.
\]
We have the following.
\begin{lem}
Let $r,\xi,N,H_{N}$ be as in Lemma \ref{lem: existance-1}, and $s<r$
and $k\ge0$. We have for all $a,b\in B_{N}\left(s\right)$ and $v,w\in S_{N}\left(s\right)$
\begin{equation}
\begin{array}{l}
\|\nabla^{k}H_{N}\left(a\right)v^{\otimes k}-\nabla^{k}H_{N}\left(b\right)w{}^{\otimes k}\|_{L^{2}}^{2}\\
\le c_{k}Ns^{4k+1}\xi^{\left(2k+1\right)}\left(s^{2}\right)\sqrt{\|v-w\|^{2}+\|a-b\|^{2}},
\end{array}\label{eq: L2 dist kth partial derivs-2}
\end{equation}
where $c_{k}=c\left(k+1\right)!2^{k},$and
\begin{equation}
\sup_{v\in S_{N}\left(s\right)}\mathbb{E}\left[\left(\nabla^{k}H_{N}\left(a\right)v^{\otimes k}\right)^{2}\right]=Ns^{2k}\sum_{l=0}^{k}{k \choose l}\left(k-l\right)!s^{2l}\xi^{\left(k+l\right)}\left(\|a\|^{2}\right).\label{eq: variance}
\end{equation}
\end{lem}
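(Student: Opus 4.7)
The plan is to reduce both bounds to a single covariance computation via formula \eqref{eq: covar of as partial deriv} of Lemma~\ref{lem: grad existance}. Viewing $\nabla^{k}H_{N}(a)v^{\otimes k}=\partial_{v}^{k}H_{N}(a)$ as the $k$-fold directional derivative, bilinearity in $v,w$ together with \eqref{eq: covar of as partial deriv} yields
\[
\Phi(a,v,b,w):=\mathbb{E}\!\left[\nabla^{k}H_{N}(a)v^{\otimes k}\cdot \nabla^{k}H_{N}(b)w^{\otimes k}\right]=N\,\partial_{v}^{k}\partial_{w'}^{k}\,\xi(\langle\sigma,\sigma'\rangle)\Big|_{\sigma=a,\,\sigma'=b,\,v'=w},
\]
where $\partial_{v}$ acts in $\sigma$ and $\partial_{w'}$ in $\sigma'$, with $v,w$ held fixed. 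Both \eqref{eq: L2 dist kth partial derivs-2} and \eqref{eq: variance} then reduce to explicit evaluations of $\Phi$ on and near the diagonal $(a,v,a,v)$.

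For \eqref{eq: variance}, I would first apply the chain rule $k$ times in $\sigma$, obtaining $\partial_{v}^{k}\xi(\langle\sigma,\sigma'\rangle)=\xi^{(k)}(\langle\sigma,\sigma'\rangle)\langle v,\sigma'\rangle^{k}$ since $\langle v,\sigma'\rangle$ is independent of $\sigma$. Next I would iteratively apply $\partial_{w'}$ while tracking the auxiliary variables $x=\langle\sigma,\sigma'\rangle$, $y=\langle v,\sigma'\rangle$, $z=\langle\sigma,w\rangle$ and $u=\langle v,w\rangle$: each $\partial_{w'}$ sends $\xi^{(m)}(x)\mapsto z\,\xi^{(m+1)}(x)$ and $y\mapsto u$ while leaving $z,u$ alone. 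An induction on the number of $\partial_{w'}$'s applied expresses $\partial_{w'}^{k}[\xi^{(k)}(x)y^{k}]$ as $\sum_{l=0}^{k}c_{k,l}\,\xi^{(k+l)}(x)\,y^{l}z^{l}u^{k-l}$ with explicit combinatorial $c_{k,l}$. Specializing $\sigma=\sigma'=a$ and $v=w$ gives $x=\|a\|^{2}$, $y=z=\langle a,v\rangle$ and $u=\|v\|^{2}=s^{2}$. Because $\xi$ has non-negative Taylor coefficients, every $\xi^{(m)}(\|a\|^{2})\ge 0$, so the supremum over $v\in S_{N}(s)$ is attained at $v$ parallel to $a$; substituting $\langle a,v\rangle^{2l}=s^{2l}\|a\|^{2l}$ and controlling $\|a\|^{2l}\le s^{2l}$ (using $\|a\|\le s$) then yields the claimed closed-form expression.

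For \eqref{eq: L2 dist kth partial derivs-2}, the key identity is
\[
\|Y_{(a,v)}-Y_{(b,w)}\|_{L^{2}}^{2}=\Phi(a,v,a,v)-2\Phi(a,v,b,w)+\Phi(b,w,b,w),
\]
with $Y_{(a,v)}:=\nabla^{k}H_{N}(a)v^{\otimes k}$, which vanishes on the diagonal $(a,v)=(b,w)$. I would split the right-hand side as the sum of two Lipschitz-type increments
\[
\big[\Phi(a,v,a,v)-\Phi(a,v,b,w)\big]+\big[\Phi(b,w,b,w)-\Phi(a,v,b,w)\big]
\]
and estimate each by the mean value theorem. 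The gradient of $\Phi$ in any of its arguments $a,b,v,w$ is computed by the same chain-rule machinery and raises the order of some $\xi$-derivative by at most one, producing at worst a factor $\xi^{(2k+1)}(s^{2})$; the remaining inner-product monomials in $\Phi$ contain $4k$ factors of absolute value at most $s^{2}$ (from $\|a\|,\|b\|\le s$ and $\|v\|=\|w\|=s$), giving $|\nabla\Phi|\le c_{k}Ns^{4k}\xi^{(2k+1)}(s^{2})$. Combining with $\|(a,v)-(b,w)\|\le\sqrt{\|a-b\|^{2}+\|v-w\|^{2}}$, and absorbing one additional factor of $s$ from a diameter bound on $B_{N}(s)\times S_{N}(s)$ (which converts a squared-distance bound into the claimed $\sqrt{\cdot}$ bound), yields the stated inequality with $c_{k}=c(k+1)!2^{k}$.

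The main obstacle will be the chain-rule induction producing the explicit combinatorial coefficients $c_{k,l}$ in the variance formula and the careful bookkeeping of $s$-powers in the gradient estimate, in order to land on the precise exponent $4k+1$ and the constant $c(k+1)!2^{k}$.
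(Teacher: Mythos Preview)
Your approach is essentially the paper's: both compute the covariance
\[
\Phi(a,v,b,w)=N\sum_{I,\pi}G_{|I|}(a,v,b,w),\qquad G_n=\langle v,w\rangle^{n}\langle a,w\rangle^{k-n}\langle b,v\rangle^{k-n}\xi^{(2k-n)}(\langle a,b\rangle),
\]
by iterated chain rule (the paper via explicit multi-index combinatorics over subsets $I$ and injections $\pi$, you via repeated directional derivatives tracking the four scalars $x,y,z,u$; these are the same computation), read off \eqref{eq: variance} on the diagonal, and bound the $L^{2}$-squared distance by estimating $\Phi(a,v,a,v)-2\Phi(a,v,b,w)+\Phi(b,w,b,w)$. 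Where the paper telescopes this combination and applies $|x^{l}-y^{l}|\le l\max(|x|,|y|)^{l-1}|x-y|$ together with \eqref{eq: var distance inequality-1} for the $\xi$-part, you invoke the mean value theorem on $\Phi$; that is a clean and valid alternative yielding the same bound.

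Your last two sentences, however, contain a bookkeeping slip. The mean value theorem on each bracket $\Phi(a,v,a,v)-\Phi(a,v,b,w)$ already gives a bound \emph{linear} in the distance $\sqrt{\|a-b\|^{2}+\|v-w\|^{2}}$, which is precisely the ``$\sqrt{\cdot}$'' in \eqref{eq: L2 dist kth partial derivs-2}; there is no squared-distance to convert and no diameter argument to make. The missing factor of $s$ you try to recover this way is in fact already inside $\|\nabla\Phi\|$: differentiating any inner-product factor of $G_{n}$ trades a scalar bounded by $s^{2}$ for a vector of $\|\cdot\|$-norm at most $s$, while differentiating $\xi^{(2k-n)}(\langle a,b\rangle)$ produces an extra factor $a$ or $b$ of norm at most $s$. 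A correct count therefore gives $\|\nabla\Phi\|\le c_{k}Ns^{4k+1}\xi^{(2k+1)}(s^{2})$ directly (after using $\xi^{(m)}(s^{2})\le s^{2(2k+1-m)}\xi^{(2k+1)}(s^{2})$, as the paper does), and \eqref{eq: L2 dist kth partial derivs-2} follows without any further adjustment.
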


\begin{proof}
By Lemma \ref{lem: grad existance}
\begin{equation}
\begin{array}{l}
\mathbb{E}\left[\left(\nabla^{k}H_{N}\left(a\right)v^{\otimes k}\right)\left(\nabla^{k}H_{N}\left(b\right)w{}^{\otimes k}\right)\right]\\
=N\sum_{i_{1}\ldots i_{k}j_{1}\ldots j_{k}}\partial_{i_{1}}^{a}\ldots\partial_{i_{k}}^{a}\partial_{j_{1}}^{b}\ldots\partial_{j_{k}}^{b}\xi\left(\left\langle a,b\right\rangle \right)v_{i_{1}}\ldots v_{i_{k}}w_{j_{1}}\ldots w_{j_{k}}.
\end{array}\label{eq: tensor covar}
\end{equation}
We have
\[
\partial_{j_{1}}^{b}\ldots\partial_{j_{k}}^{b}\xi\left(\left\langle a,b\right\rangle \right)=N^{-k}a_{j_{1}}\ldots a_{j_{k}}\xi^{\left(k\right)}\left(\left\langle a,b\right\rangle \right),
\]
and by the product rule
\[
\begin{array}{l}
\partial_{i_{k}}^{a}a_{j_{1}}\ldots a_{j_{k}}\xi^{\left(k\right)}\left(\left\langle a,b\right\rangle \right)\\
=\sum_{r=1}^{k}\delta_{i_{k}j_{r}}\prod_{l\ne r}a_{j_{l}}\xi^{\left(k\right)}\left(\left\langle a,b\right\rangle \right)+N^{-1}a_{j_{1}}\ldots a_{j_{k}}b_{i_{k}}\xi^{\left(k+1\right)}\left(\left\langle a,b\right\rangle \right).
\end{array}
\]
Applying also the other derivatives $\partial_{i_{1}}^{a}\ldots\partial_{i_{k-1}}^{a}$
we get a large sum, which can be expressed as follows. Letting $I$
denote the set of indices $r$ of $i_{r}$ where the derivative $\partial_{i_{r}}^{a}$
is applied to one of $a_{j_{1}},\ldots,a_{j_{k}}$, and letting $\pi\left(r\right)$
denote the index of the factor it is applied to, we get that
\[
\begin{array}{l}
N\partial_{i_{1}}^{a}\ldots\partial_{i_{k}}^{a}\partial_{j_{1}}^{b}\ldots\partial_{j_{k}}^{b}\xi\left(\left\langle a,b\right\rangle \right)=N^{1-k}\partial_{i_{1}}^{a}\ldots\partial_{i_{k}}^{a}a_{j_{1}}\ldots a_{j_{k}}\xi\left(\left\langle a,b\right\rangle \right)\\
=\sum_{I}\sum_{\pi:I\to\left\{ 1,\ldots,k\right\} }\left(\prod_{r\in I}\delta_{i_{r}j{}_{\pi\left(r\right)}}\right)\left(\prod_{r\notin I}b_{i_{r}}\right)\left(\prod_{r\notin\pi\left(I\right)}a_{j_{r}}\right)N^{1-\left(2k-\left|I\right|\right)}\xi^{\left(2k-\left|I\right|\right)}\left(\left\langle a,b\right\rangle \right),
\end{array}
\]
where $I$ is summed over all subsets of $\left\{ 1,\ldots,k\right\} $
and $\pi$ over all injective maps from $I$ to $\left\{ 1,\ldots,k\right\} $.
Applying the sums over $i_{1},\ldots,i_{k},j_{1},\ldots,j_{k}$ from
(\ref{eq: tensor covar}) we get
\[
\mathbb{E}\left[\left(\nabla^{k}H_{N}\left(a\right)v^{\otimes k}\right)\left(\nabla^{k}H_{N}\left(b\right)w{}^{\otimes k}\right)\right]=N\sum_{I}\sum_{\pi:I\to\left\{ 1,\ldots,k\right\} }G_{\left|I\right|}\left(a,v,b,w\right),
\]
where
\[
G_{n}\left(a,v,b,w\right)=\left\langle v,w\right\rangle ^{n}\left\langle a,w\right\rangle ^{k-n}\left\langle b,v\right\rangle ^{k-n}\xi^{\left(2k-n\right)}\left(\left\langle a,b\right\rangle \right).
\]
From this (\ref{eq: variance}) follows. It also implies that
\begin{equation}
\begin{array}{l}
\|\nabla^{k}H_{N}\left(a\right)v^{\otimes k}-\nabla^{k}H_{N}\left(b\right)w{}^{\otimes k}\|_{L^{2}}^{2}\\
=N\sum_{I}\sum_{\pi:I\to\left\{ 1,\ldots,k\right\} }\left\{ G_{\left|I\right|}\left(a,v,a,v\right)+G_{\left|I\right|}\left(b,w,b,w\right)-2G_{\left|I\right|}\left(a,v,b,w\right)\right\} .
\end{array}\label{eq: l2 diff}
\end{equation}
Using that $\|a\|,\|b\|,\|v\|,\|w\|\le s\sqrt{N}$ and $\left|x^{l}-y^{l}\right|\le\left|x-y\right|l\max\left(\left|x\right|,\left|y\right|\right)^{l-1}$
we get
\[
\begin{array}{rcccl}
\left|\left\langle v,v\right\rangle ^{n}-\left\langle v,w\right\rangle ^{n}\right| & \le & \left|\left\langle v,v\right\rangle -\left\langle v,w\right\rangle \right|ks^{2\left(n-1\right)} & \le & k\|v-w\|s^{2n-1},\\
\left|\left\langle a,w\right\rangle ^{k-n}-\left\langle a,v\right\rangle ^{k-n}\right| & \le & \left|\left\langle a,v\right\rangle -\left\langle a,w\right\rangle \right|ks^{2\left(k-n-1\right)} & \le & k\|v-w\|s^{2k-2n-1},\\
\left|\left\langle a,v\right\rangle ^{k-n}-\left\langle b,v\right\rangle ^{k-n}\right| & \le & \left|\left\langle a,v\right\rangle -\left\langle b,v\right\rangle \right|ks^{2\left(k-n-1\right)} & \le & k\|a-b\|s^{2k-2n-1},\\
\left|\left\langle a,v\right\rangle ^{k-n}-\left\langle b,w\right\rangle ^{k-n}\right| & \le & \left|\left\langle a,v\right\rangle -\left\langle b,w\right\rangle \right|ks^{2\left(k-n-1\right)} & \le & k\left(\|a-b\|+\|v-w\|\right)s^{2k-2n-1}.
\end{array}
\]
From these we obtain
\[
\begin{array}{l}
G_{n}\left(a,v,a,v\right)+G_{n}\left(b,w,b,w\right)-2G_{n}\left(a,v,b,w\right)\\
\le ck\left(\|a-b\|+\|v-w\|\right)s^{4k-2n-1}\xi^{\left(2k-n\right)}\left(s^{2}\right)\\
+s^{4k-2n}\left\{ \xi^{\left(2k-n\right)}\left(\left\langle a,a\right\rangle \right)+\xi^{\left(2k-n\right)}\left(\left\langle b,b\right\rangle \right)-2\xi^{\left(2k-n\right)}\left(\left\langle a,b\right\rangle \right)\right\} .
\end{array}
\]
Next by (\ref{eq: var distance inequality-1}) and the inequality
$\xi^{\left(l\right)}\left(s^{2}\right)\le s^{2m}\xi^{\left(l+m\right)}\left(s^{2}\right)$
this is at most
\[
ck\left(\|a-b\|+\|v-w\|\right)s^{4k+1}\xi^{\left(2k+1\right)}\left(s^{2}\right).
\]
Thus from (\ref{eq: l2 diff}) and $\sum_{I}\sum_{\pi:I\to\left\{ 1,\ldots,k\right\} }1\le2^{k}k!$
the claim (\ref{eq: L2 dist kth partial derivs-2}) follows.
\end{proof}
From this we derive the following estimates for the spectral norm
of $\nabla^{k}H_{N}\left(\sigma\right)$.
\begin{lem}
\label{lem: spectral norm bound}There is a universal constant $c$
such that the following holds. Let $r,\xi,N,H_{N}$ be as in Lemma
\ref{lem: existance-1}, and $0<s<r$. Then with $c_{k}=c\left(k+1\right)!2^{k}$
\begin{equation}
\mathbb{E}\left[\sup_{\sigma\in B_{N}\left(s\right)}\sup_{v:\|v\|=1}\left|\nabla^{k}H_{N}\left(\sigma\right)v^{\otimes k}\right|\right]\le\sqrt{c_{k}}Ns^{k+1}\sqrt{\xi^{\left(2k+1\right)}\left(s^{2}\right)},\label{eq: tensor max expectaqtion}
\end{equation}
and letting $w^{2}=s^{2k}\sum_{l=0}^{k}{k \choose l}\left(k-l\right)!s^{2l}\xi^{\left(k+l\right)}\left(s^{2}\right)$
also
\begin{equation}
\mathbb{P}\left(\sup_{\sigma\in B_{N}\left(s\right)}\sup_{v:\|v\|=1}\left|\nabla^{k}H_{N}\left(\sigma\right)v^{\otimes k}\right|\ge u\right)\le e^{-\frac{u^{2}}{8w^{2}N}}\text{ for }u\ge2\sqrt{c_{k}}Ns^{k+1}\sqrt{\xi^{\left(2k+1\right)}\left(s^{2}\right)}.\label{eq: tensor tail}
\end{equation}
\end{lem}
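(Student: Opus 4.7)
The plan is to apply Lemma \ref{lem: general gaussian theory} (Dudley's bound and the Borell--TIS inequality) to the jointly indexed centered Gaussian process
\[
X_{(\sigma,v)} \;=\; \nabla^{k}H_{N}(\sigma)\,v^{\otimes k},\qquad (\sigma,v)\in T,
\]
where it is convenient to take $T=B_{N}(s)\times S_{N-1}(s)$, i.e.\ to first work with $v$ on the $\|\cdot\|$-sphere of radius $s$ and then rescale. The key point is that the previous lemma already supplies both ingredients needed: the $L^{2}$-continuity estimate \eqref{eq: L2 dist kth partial derivs-2} and the variance formula \eqref{eq: variance}.

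First I would embed $T$ into a ball. Viewing $(\sigma,v)\in\mathbb{R}^{2N}$ with the normalized norm $\|(\sigma,v)\|_{2N}^{2}=(|\sigma|^{2}+|v|^{2})/(2N)=(\|\sigma\|^{2}+\|v\|^{2})/2$, we have $T\subset B_{2N}(s)$. Moreover,
\[
\sqrt{\|\sigma-\sigma'\|^{2}+\|v-v'\|^{2}} \;=\; \sqrt{2}\,\|(\sigma,v)-(\sigma',v')\|_{2N},
\]
so \eqref{eq: L2 dist kth partial derivs-2} rewrites as
\[
\|X_{(\sigma,v)}-X_{(\sigma',v')}\|_{L^{2}}^{2} \;\le\; a\,(2N)\,\|(\sigma,v)-(\sigma',v')\|_{2N},
\]
with $a=c_{k}s^{4k+1}\xi^{(2k+1)}(s^{2})/\sqrt{2}$. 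This is exactly the hypothesis of Lemma \ref{lem: general gaussian theory} with ambient dimension $M=2N$ and radius $s'=s$.

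Next I would apply Lemma \ref{lem: general gaussian theory}. From \eqref{eq: max of gaussian proc general},
\[
\mathbb{E}\Bigl[\sup_{(\sigma,v)\in T}|X_{(\sigma,v)}|\Bigr] \;\le\; cM\sqrt{s'a} \;\le\; c'\sqrt{c_{k}}\,N\,s^{2k+1}\sqrt{\xi^{(2k+1)}(s^{2})}.
\]
To pass from $\|v\|=s$ to $\|v\|=1$, I use the homogeneity $\nabla^{k}H_{N}(\sigma)v^{\otimes k}=s^{k}\nabla^{k}H_{N}(\sigma)(v/s)^{\otimes k}$, so $\sup_{\|v\|=1}|\nabla^{k}H_{N}(\sigma)v^{\otimes k}|=s^{-k}\sup_{\|v\|=s}|\nabla^{k}H_{N}(\sigma)v^{\otimes k}|$, which absorbs an $s^{k}$ and yields the claimed $N s^{k+1}\sqrt{\xi^{(2k+1)}(s^{2})}$ in \eqref{eq: tensor max expectaqtion}. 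For the tail \eqref{eq: tensor tail}, \eqref{eq: bound on tail} gives
\[
\mathbb{P}\Bigl(\sup_{(\sigma,v)\in T}|X_{(\sigma,v)}|\ge u_{Z}\Bigr) \;\le\; \exp\!\Bigl(-\tfrac{u_{Z}^{2}}{8\sup_{(\sigma,v)\in T}\mathbb{E}[X_{(\sigma,v)}^{2}]}\Bigr),\quad u_{Z}\ge 2c\sqrt{s'a},
\]
and by \eqref{eq: variance} together with monotonicity of $\xi^{(k+l)}$ (all Taylor coefficients of $\xi$ are nonnegative), $\sup_{(\sigma,v)\in T}\mathbb{E}[X_{(\sigma,v)}^{2}]\le Nw^{2}$ with $w$ as in the statement. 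Rewriting via the same homogeneity (i.e.\ setting $u_{Z}=s^{k}u$ and tracking the constant $2c\sqrt{s'a}$ on the threshold) delivers \eqref{eq: tensor tail}.

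There is no real conceptual obstacle: the two Gaussian-process inputs have already been assembled in the previous lemma, and the only mildly delicate step is the bookkeeping needed to embed the product index set $T$ into a single normalized ball so that Lemma \ref{lem: general gaussian theory} applies with the correct powers of $s$ and $N$. Once that embedding and the rescaling $v\mapsto v/s$ are in place, \eqref{eq: tensor max expectaqtion} and \eqref{eq: tensor tail} follow by direct substitution, modulo absorbing universal constants into $c$.
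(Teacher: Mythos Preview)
Your proposal is correct and follows essentially the same route as the paper: define the centered Gaussian process $X_{(\sigma,v)}=\nabla^{k}H_{N}(\sigma)v^{\otimes k}$ on $T=B_{N}(s)\times S_{N-1}(s)\subset B_{2N}(\sqrt{2}s)$, feed the $L^{2}$-increment bound \eqref{eq: L2 dist kth partial derivs-2} and the variance formula \eqref{eq: variance} into Lemma~\ref{lem: general gaussian theory} with $2N$ in place of $N$, and then divide by $s^{k}$ to normalize $v$. Your embedding $T\subset B_{2N}(s)$ is in fact slightly sharper than the paper's $B_{2N}(\sqrt{2}s)$, but the difference is absorbed into the universal constant.
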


\begin{proof}
Let $T=B_{N}\left(s\right)\times S_{N-1}\left(s\right)$. It follows
from Lemma \ref{lem: grad existance} that $\nabla^{k}H_{N}\left(\sigma\right)v^{\otimes k},\left(\sigma,v\right)\in T$,
is a centered Gaussian process. The claims then follow from (\ref{eq: L2 dist kth partial derivs-2})
and Lemma \ref{lem: general gaussian theory} with $2N$ in place
of $N$, $2s$ in place of $s$, $T\subset B_{2N}\left(\sqrt{2}s\right)$
and $a=c_{k}Ns^{4k+1}\xi^{\left(2k+1\right)}\left(s^{2}\right)$,
after dividing by $s^{k}$ to normalize $v$. 
\end{proof}
In this article we use the following special cases. Recall that
$B_{N}=B_{N}\left(1\right),B_{N}^{\circ}=B_{N}^{\circ}\left(1\right)$.

\begin{lem}
\label{lem: energy and first deriv covar}If $\xi\left(x\right)=\sum_{p\ge0}a_{p}x^{p}$
is a power series with non-negative coefficients $a_{p}\ge0$ such
that $\xi\left(1\right)<\infty$ and $N\ge1$, then there exists a
centered Gaussian process $\left(H_{N}\left(\sigma\right)\right)_{\sigma\in B_{N}}$
with covariance 
\[
\mathbb{E}\left[H_{N}\left(\sigma\right)H_{N}\left(\sigma'\right)\right]=N\xi\left(\left\langle \sigma,\sigma'\right\rangle \right)\text{ for all }\sigma,\sigma'\in B_{N},
\]
which is almost surely differentiable in $B_{N}^{o}$. Also $\left(H_{N}\left(\sigma\right),\nabla H_{N}\left(\sigma\right)\right)$
is a centered Gaussian process satisfying for all $i,j=1,\ldots,N$
and $\sigma,\sigma'\in B_{N}^{o}$
\begin{equation}
\mathbb{E}\left[\partial_{i}H_{N}\left(\sigma\right)\partial_{j}H_{N}\left(\sigma'\right)\right]=\delta_{ij}\xi'\left(\left\langle \sigma,\sigma'\right\rangle \right)+\frac{\sigma_{j}\sigma_{i}^{'}}{N}\xi''\left(\left\langle \sigma,\sigma'\right\rangle \right)\label{eq: grad grad covar}
\end{equation}
and
\begin{equation}
\mathbb{E}\left[H_{N}\left(\sigma\right)\partial_{i}H_{N}\left(\sigma'\right)\right]=\sigma_{i}\xi'\left(\left\langle \sigma,\sigma'\right\rangle \right).\label{eq: energy grad covar}
\end{equation}
\end{lem}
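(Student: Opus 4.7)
The plan is to derive this lemma as an essentially immediate corollary of Lemma~\ref{lem: existance-1} and Lemma~\ref{lem: grad existance}, both already established in the appendix, followed by routine chain rule calculations. Since $\xi(1) < \infty$ and the coefficients are non-negative, Lemma~\ref{lem: existance-1} applied with $r=1$ gives the existence of the centered Gaussian process $H_N$ on $B_N$ with the prescribed covariance. Lemma~\ref{lem: grad existance} then yields that $H_N$ is almost surely smooth on $B_N^\circ$, that $(H_N(\sigma), \nabla H_N(\sigma))_{\sigma \in B_N^\circ}$ is jointly Gaussian (being a subfamily of the larger Gaussian family of all partial derivatives), and that covariances of partial derivatives are obtained by applying the corresponding partial derivatives to $N\xi(\langle \sigma,\sigma'\rangle)$.

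It then remains to carry out two chain-rule computations. For (\ref{eq: energy grad covar}), I would use the formula
\[
\mathbb{E}[H_N(\sigma)\partial_i H_N(\sigma')] = N \partial_i^{\sigma'} \xi(\langle \sigma,\sigma'\rangle),
\]
from Lemma~\ref{lem: grad existance} (applied with $\alpha$ of length zero and $\alpha'=(i)$). Since $\langle \sigma,\sigma'\rangle = \frac{1}{N}\sum_k \sigma_k \sigma'_k$, we have $\partial_i^{\sigma'}\langle \sigma,\sigma'\rangle = \sigma_i/N$, giving $N \partial_i^{\sigma'}\xi(\langle\sigma,\sigma'\rangle) = \sigma_i \xi'(\langle\sigma,\sigma'\rangle)$.

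For (\ref{eq: grad grad covar}), the same lemma gives
\[
\mathbb{E}[\partial_i H_N(\sigma)\partial_j H_N(\sigma')] = N \partial_i^{\sigma} \partial_j^{\sigma'} \xi(\langle \sigma,\sigma'\rangle).
\]
Starting from the previous computation $\partial_j^{\sigma'}\xi(\langle\sigma,\sigma'\rangle) = (\sigma_j/N)\xi'(\langle\sigma,\sigma'\rangle)$, differentiating in $\sigma_i$ via the product rule and using $\partial_i^\sigma \sigma_j = 0$ and $\partial_i^\sigma \langle\sigma,\sigma'\rangle = \sigma'_i/N$ yields
\[
\partial_i^\sigma \partial_j^{\sigma'}\xi(\langle\sigma,\sigma'\rangle) = \frac{\delta_{ij}}{N}\xi'(\langle\sigma,\sigma'\rangle) + \frac{\sigma_j \sigma'_i}{N^2}\xi''(\langle\sigma,\sigma'\rangle),
\]
and multiplying by $N$ gives the claimed identity.

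There is no genuine obstacle: all the analytic work (existence, continuity, $L^2$-convergence of difference quotients to partial derivatives, and the general covariance identity for arbitrary multi-indices) has already been done in Lemmas~\ref{lem: existance-1} and~\ref{lem: grad existance}. The only point requiring mild attention is to verify the differentiability with respect to $\sigma'$ in (\ref{eq: energy grad covar}): this is fine because the covariance identity of Lemma~\ref{lem: grad existance} holds for any multi-indices $\alpha, \alpha'$, including the case where one of them is empty.
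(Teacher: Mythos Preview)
Your proposal is correct and matches the paper's proof, which simply states that the lemma is a special case of Lemmas~\ref{lem: existance-1} and~\ref{lem: grad existance}; you have merely spelled out the chain rule computations that the paper leaves implicit. One small slip: you write $\partial_i^\sigma \sigma_j = 0$, but of course $\partial_i^\sigma \sigma_j = \delta_{ij}$ (this is exactly what produces the $\delta_{ij}$ term in your displayed formula, so the computation itself is fine).
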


\begin{proof}
These are a special case of Lemmas \ref{lem: existance-1}, \ref{lem: grad existance}.
\end{proof}
\begin{lem}
\label{lem: grad estimate}For $\xi,N,H_{N}$ as in Lemma \ref{lem: energy and first deriv covar}
with $\xi'\left(1\right)<\infty$ it holds that
\begin{equation}
\mathbb{P}\left(\sup_{\sigma\in B_{N}}\left|H_{N}\left(\sigma\right)\right|\ge u\right)\le e^{-\frac{u^{2}}{8\xi\left(1\right)N}}\text{ for all }u\ge c\sqrt{\xi'\left(1\right)}N,\label{eq: max bound special case}
\end{equation}
and if also $\xi'''\left(1\right)<\infty$ it holds for all $u\ge c\xi'''\left(1\right)$
that

\begin{equation}
\mathbb{P}\left(\sup_{\sigma\in B_{N}^{\circ}}\|\nabla H_{N}\left(\sigma\right)\|\ge u\right)\le e^{-\frac{u^{2}}{8\left(\xi''\left(1\right)+\xi'\left(1\right)\right)}N},\label{eq: bound on grad}
\end{equation}
and
\begin{equation}
\mathbb{P}\left(\exists\sigma,\sigma'\in B_{N}^{\circ}\text{ s.t. }\left|H_{N}\left(\sigma\right)-H_{N}\left(\sigma'\right)\right|\ge uN\|\sigma-\sigma'\|\right)\le e^{-\frac{u^{2}}{8\left(\xi''\left(1\right)+\xi'\left(1\right)\right)}N}.\label{eq: lipschitz}
\end{equation}
\end{lem}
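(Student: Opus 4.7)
The plan is to derive all three estimates from the already-established Lemmas \ref{lem: max and cond} and \ref{lem: spectral norm bound} together with the mean value theorem, paying attention only to the normalization $\|\cdot\|=|\cdot|/\sqrt{N}$.

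First I would handle (\ref{eq: max bound special case}): this is an immediate specialization of (\ref{eq: bound on energy}) in Lemma \ref{lem: max and cond} to $s=1$, using the hypothesis $\xi'(1)<\infty$. No additional work is required beyond checking that the threshold $2cs\sqrt{\xi'(s^2)}N$ in (\ref{eq: bound on energy}) matches the stated $c\sqrt{\xi'(1)}N$, after possibly adjusting the universal constant $c$.

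Next, for (\ref{eq: bound on grad}), I would apply Lemma \ref{lem: spectral norm bound} with $k=1$ and $s=1$. The key translation is that, since $\|v\|=1$ means $|v|=\sqrt N$, one has
\[
\sup_{v:\|v\|=1}\left|\nabla H_N(\sigma)\cdot v\right| \;=\; |\nabla H_N(\sigma)|\sqrt N \;=\; N\|\nabla H_N(\sigma)\|.
\]
With $k=1,s=1$, formula (\ref{eq: variance}) gives $w^2=\xi''(1)+\xi'(1)$ and the threshold in (\ref{eq: tensor tail}) becomes $2\sqrt{c_1}N\sqrt{\xi'''(1)}$. Substituting $u\mapsto uN$ to account for the rescaling above then yields (\ref{eq: bound on grad}) with the right sub-Gaussian variance $\xi''(1)+\xi'(1)$ in the denominator and with threshold proportional to $\sqrt{\xi'''(1)}$, which in turn is bounded by $c\xi'''(1)$ upon adjusting the universal constant (using that $\sqrt{x}\le\max(1,x)$ and the tail estimate is trivial for $u$ of order $1$).

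Finally, (\ref{eq: lipschitz}) follows from (\ref{eq: bound on grad}) by the mean value theorem. By Lemma \ref{lem: grad existance}, $H_N$ is a.s. smooth on $B_N^{\circ}$, so for any $\sigma,\sigma'\in B_N^{\circ}$ there exists $\sigma''$ on the segment between them with $|H_N(\sigma)-H_N(\sigma')|\le|\nabla H_N(\sigma'')||\sigma-\sigma'|=N\|\nabla H_N(\sigma'')\|\|\sigma-\sigma'\|$. Hence the event on the LHS of (\ref{eq: lipschitz}) is contained in $\{\sup_{\sigma\in B_N^{\circ}}\|\nabla H_N(\sigma)\|\ge u\}$, and (\ref{eq: lipschitz}) is a direct consequence of (\ref{eq: bound on grad}).

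There is no real obstacle here; the only bookkeeping point that needs care is the normalization of $\|\cdot\|$ versus $|\cdot|$ when converting the bound on $\sup_{\|v\|=1}|\nabla H_N\cdot v|$ provided by Lemma \ref{lem: spectral norm bound} into a bound on $\|\nabla H_N\|$, and the matching of thresholds $2\sqrt{c_1}N\sqrt{\xi'''(1)}$ vs.\ the stated $c\xi'''(1)$, which is achieved by absorbing a factor into the universal constant.
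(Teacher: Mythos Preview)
Your proposal is correct and follows the same route as the paper: (\ref{eq: max bound special case}) is the $s=1$ case of (\ref{eq: bound on energy}), (\ref{eq: bound on grad}) is the $k=1$ case of Lemma~\ref{lem: spectral norm bound}, and (\ref{eq: lipschitz}) then follows from (\ref{eq: bound on grad}) via the mean value theorem. Your explicit check of the normalization $\sup_{\|v\|=1}|\nabla H_N(\sigma)\cdot v|=N\|\nabla H_N(\sigma)\|$ and of $w^2=\xi'(1)+\xi''(1)$ is exactly the right bookkeeping; the only minor technicality you gloss over (as does the paper) is that Lemma~\ref{lem: spectral norm bound} is stated for $s<r=1$, so one should obtain the bound on $B_N(s)$ for each $s<1$ and pass to the limit $s\uparrow 1$ to cover $B_N^\circ$.
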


\begin{proof}
The bound (\ref{eq: max bound special case}) is a special case of
(\ref{eq: bound on energy}), and (\ref{eq: bound on grad}) is the
case $k=1$ of Lemma \ref{lem: spectral norm bound} and implies (\ref{eq: lipschitz})
via the mean value theorem.
\end{proof}

\printbibliography

\end{document}